\documentclass[12pt]{article}
\usepackage{amsmath,amsthm}
\usepackage{amssymb,multirow,booktabs}
\usepackage{amsfonts,mathrsfs}
\usepackage{bbm}
\usepackage[colorlinks,anchorcolor=blue,citecolor=blue]{hyperref}

\textheight 230mm \setlength{\topmargin}{-20mm}
\textwidth 180mm \setlength{\oddsidemargin}{-10mm}
\setlength{\arraycolsep}{2pt}
\allowdisplaybreaks

\theoremstyle{plain}
\newtheorem{thm}{\noindent\bf Theorem}[section]
\newtheorem{cor}{\noindent\bf Corollary}[section]

\newtheorem{lem}{\noindent\bf Lemma}[section]
\newtheorem{prop}{\noindent\bf Proposition}[section]
\theoremstyle{remark}
\newtheorem{rmk}{\noindent \bf \textit{Remark}}[section]


\newcommand{\D}{\displaystyle}
\newcommand{\non}{\nonumber}
\renewcommand{\P}{\mathbb{P}}
\newcommand{\E}{\mathbb{E}}
\newcommand{\ddr}{\mathrm{d}}

\usepackage{xcolor}
\newcounter{count}
\setcounter{count}{1}

\usepackage{enumerate}
\usepackage{multicol}
\numberwithin{equation}{section}

\newcommand{\kara}{Proposition \ref{prop:kara}}
\newcommand{\ktt}{Proposition \ref{prop:ktt}}
\newcommand{\asp}{\hyperlink{asp}{\color{black}$\mathbb{H}$}}

\begin{document}

\title{Behaviors near explosion of nonlinear CSBPs with regularly varying mechanisms}

\author{Bo Li\thanks{School of Mathematics and LPMC, Nankai University, Tianjin 300071, PR China. E-mail: libo@nankai.edu.
cn}\,, Cl\'ement Foucart\thanks{CMAP, Ecole Polytechnique, IP Paris, Palaiseau, France. Email: clement.foucart@polytechnique.edu } \ and Xiaowen Zhou\thanks{Department of Mathematics and Statistics, Concordia University, Montreal, Canada H3G 1M8. Email:
xiaowen.zhou@concordia.ca}}

\maketitle

We study the explosion phenomenon of nonlinear continuous-state branching processes (nonlinear CSBPs). First an explicit integral test for explosion is designed when the rate function does not increase too fast. We then exhibit three different regimes of explosion when the branching mechanism and the rate function are regularly varying respectively at $0$ and $\infty$  with indices  $\alpha$ and $\beta$ such that $0\leq\alpha\leq \beta$ and $\beta>0$. If $\alpha>0$ then the renormalisation of the process before its explosion is linear. When moreover $\alpha\neq \beta$, the limiting distribution is that of a ratio of two independent random variables whose laws are identified. When $\alpha=\beta$, the limiting random variable shrinks to a constant. Last, when $\alpha=0$, i.e. the branching mechanism is slowly varying at $0$, the process is studied with the help of a nonlinear renormalisation. The limiting distribution becomes the inverse uniform distribution. This complements results known in the case of finite mean and provides new insight for the classical explosive continuous-state branching processes (for which $\beta=1$).

\vskip 0.5cm
\noindent{\bf Keywords}: continuous-state branching process, nonlinear time-change, explosion, spectrally positive L\'evy process, perpetual integral, asymptotic overshoot, regular variation.
\vskip 0.5cm
\noindent{\bf MSC2020}: 60J25, 60J80.

\section{Introduction and main results}\label{sec:intro}

Continuous-state branching processes (CSBP for short) are processes representing the total size of a population in which, intuitively, each individual reproduces independently with the same law. It is well known that when the offspring distribution has infinite first moment, the process might explode in finite time, in the sense that large jumps happen so fast that the process reaches infinity in  finite time; see for instance  Savits \cite{Savits} and Grey \cite{Grey}. 

The purpose of this article is to study the renormalisations in law of the processes prior to their explosion. Instead of only considering classical CSBPs, we treat the larger class of the so-called nonlinear CSBPs. The latter have been introduced and studied in recent works; see e.g. Li and Zhou \cite{LiZ2021} and the references therein. They are parametrized by two functions:
a branching mechanism $\psi$, which is the Laplace exponent of a spectrally positive L\'evy process (SPLP for short) $\xi:=(\xi(t),t\geq 0)$, and
a rate function $R$, which is assumed to be a \textit{positive} measurable function on $(0,\infty)$, bounded away from both $0$ and $\infty$ on every compact subset of $(0,\infty)$. Equivalently, $1/R$ is measurable positive and locally bounded.
Heuristically, the branching mechanism $\psi$ governs the reproduction, as in the classical CSBP, and $R$ controls the reproduction rate.

More explicitly, let $x\in [0,\infty)$ and denote by $\mathbb{P}_x$ the law of the SPLP $\xi$ with $\xi(0)=x$ a.s.
We will also write $\mathbb{P}:=\mathbb{P}_0$. For any $a,b\geq 0$, denote the first passage times of the process $\xi$ by
\begin{equation}\label{firstpassagetimexi}
\tau_{a}^{-}:=\inf\{t>0, \xi(t)<a\}
\quad\text{and}\quad
\tau_{b}^{+}:=\inf\{t>0, \xi(t)>b\}.
\end{equation}
Define the additive functional $\eta$ by
\begin{equation}\label{defn:eta}
\eta(t):=\int_{0}^{t}\frac{\ddr s}{R(\xi(s))}
\quad\text{and}\quad
\eta^{-1}(s):=\inf\{t\ge0: \eta(t)>s\},
\end{equation}
for $t\in[0,\tau_{0}^{-})$ and $s\in[0,\eta(\tau_{0}^{-}))$ with convention $\inf\emptyset=\infty$. The nonlinear CSBP $X$, with branching mechanism $\psi$ and rate function $R$ is defined by time changing $\xi$ as follows: for any $t\geq 0$,
\begin{equation}
\label{defn:X}
X(t):=\left\{
\begin{aligned}
&\,\,\xi\big(\eta^{-1}(t)\big), &&\text{when}\,\, t\in [0, \eta(\tau^-_0)),\\
&\,\,\infty, &&\text{when}\,\, \eta(\infty)\leq t<\tau^-_0=\infty,\\
&\,\,0, &&\text{when}\,\, \tau^-_0<\infty, \, \eta(\tau^-_0)<\infty\,\,\text{and}\,\,
	t\geq \eta(\tau^-_0).
\end{aligned}\right.
\end{equation}
We stress that by definition the process $X$ takes its values in  $[0, \infty]$ and both boundaries $0$ and $\infty$ are absorbing.

In order to avoid cumbersome notation, we also denote the law of the process $X$ started from $x$ by $\mathbb{P}_x$. This will not cause any confusion. The assumption that $1/R$ is positive on compact subsets of $(0,\infty)$ ensures  that the time-changed SPLP is well-defined when started from a point $x\in (0,\infty)$. We also highlight that for any rate function $R$, the nonlinear CSBP $X$ inherits from $\xi$ the strong Markov property and its c\`adl\`ag sample paths.

The definition \eqref{defn:X} above is also referred to as a Lamperti-type transform of the process $\xi$. We call $\xi$ the {\it parent L\'evy process} of $X$. It is well known that, if the function $R$ is the identity function, the process $X$ reduces to the classical CSBP with branching mechanism $\psi$, and if $R$ is an exponential function, the process $X$ is associated to a positive self-similar Markov process. The two aforementioned transforms are called the \textit{Lamperti transformations} in the literature; see e.g. Kyprianou \cite[Chapter 12 and Chapter 13]{Kyprianou2014book}. We also mention that the polynomial case, for which $R(x)=x^{\beta}$, has been introduced in Li \cite{Li2019}.

A nice feature of this class of Markov processes is that the function $R$ allows for a new  degree of freedom to exhibit richer long-term behaviors (extinction, coming down from infinity, explosion, etc) in comparison to the classical CSBP framework.
For instance, Li et al. \cite{LTZ22} investigate the role of the rate function in the extinguishing phenomenon i.e. the convergence of $X$ towards $0$ without reaching it. Foucart et al. \cite{FLZ2021} treat the case of a parent L\'evy process that either oscillates or drifts towards $-\infty$ and address the question of whether the time-changed L\'evy process $X$ can be started from infinity, and  at what speed it comes down from infinity. 
Last but not least, the study of the explosion of the process $X$ and of its behavior ``one second" prior to explosion has been carried out in \cite{LiZ2021} in the case of a parent L\'evy process with finite mean. The latter assumption is in particular ruling out the classical CSBPs. 

Our main objective is to investigate the case a parent L\'evy process with \textit{infinite} mean. We first collect some elementary properties of nonlinear CSBPs that are consequences of the time-change relationship \eqref{defn:X}. We review then briefly some results known in the setting of a jump measure with finite mean. We shall also explain  why other arguments have to be designed when it is infinite. 

From now on, consider a parent L\'evy process $\xi$ with no negative jumps drifting towards $+\infty$. For any $y>0$ and $a>0$, set
$$T^{+}_{b}:=\inf\{t>0: X_{t}> b\}  \text{ and } T^{-}_{a}:=\inf\{t>0: X_{t}\leq a\}.$$
By the assumption of local boundedness of $R$ on $(0,\infty)$, we have for any $b>x>a>0$, $\mathbb{P}_x$-almost surely 
\[
\{T_b^{+}\wedge T_{a}^{-}<\infty\}=\{\tau_{b}^{+}\wedge \tau_{a}^{-}<\infty,\eta(\tau_{b}^{+}\wedge \tau_{a}^{-})<\infty\}=\{\tau_{b}^{+}\wedge \tau_{a}^{-}<\infty\},
\]
and the latter event has probability one.
Define the explosion time of $X$ by \[T_\infty^{+}:=\inf\{t>0: X_t=\infty\}=\underset{b\rightarrow \infty}{\lim} \!\! \uparrow T^{+}_{b}.\] 
The following identity relating $T^{+}_{\infty}$ for $X$ and $\eta$ for $\xi$ is obtained immediately from the definition \eqref{defn:X}: almost surely on the non-extinction event, that is, on
$\{X_t \underset{t\rightarrow \infty}{\nrightarrow} 0\}=\{\tau_0^{-}=\infty\}$,
we have
\begin{equation}\label{perpetualintegral}
T_{\infty}^{+}=\eta(\infty)=\int^{\infty}_0\frac{\ddr s}{R(\xi_s)}.
\end{equation}
Hence, the event of explosion $\{T_\infty^{+}<\infty\}$ is equivalent to the event of finiteness of the \textit{perpetual integral} $\eta(\infty)$. This is a classical topic, of independent interest, which has been studied intensively. We refer the reader for instance to  D\"oring and Kyprianou \cite{Doring2015}, Kolb and Savov \cite{Kolb2020}, Li and Zhou \cite{LPSZ2022} and  the references therein.

We now sketch the results obtained in \cite{LiZ2021} under the assumption that the parent L\'evy process $\xi$ has a finite mean, i.e.
\begin{equation}\label{finite-mean}
	\mathbb{E}[\xi(1)]=-\psi'(0)\in(0,\infty),
\end{equation}
and explain the differences with the  setting $\mathbb{E}[\xi(1)]=\infty$. It is established in \cite{LiZ2021} that the process explodes on the non-extinction event with positive probability, i.e. $\mathbb{P}_x(\eta(\infty)<\infty; \tau_0^{-}=\infty)>0$, see \eqref{perpetualintegral}, if and only if
\begin{equation}\label{conditionfinitemean}
\int^{\infty}\frac{\ddr y}{R(y)}<\infty.
\end{equation}
Explosion occurs here not due to the jumps  but due to drastic speed-up of the time, i.e. $R(x)$ goes quickly towards $\infty$ as $x\to\infty$, so that the long time behavior of $\xi$ becomes the short time behavior of $X$. This can also be explained by the strong law of large numbers for L\'evy processes, which ensures that for any $x\geq 0$,  $\mathbb{P}_x$-almost surely
\[\xi(s)\underset{s\rightarrow \infty}{\sim} s\mathbb{E}[\xi(1)]=-s\psi'(0).\]
Replacing $\xi(s)$ in the integrand of \eqref{perpetualintegral} by the equivalent function $-s\psi'(0)$ would then lead to the condition \eqref{conditionfinitemean}.

In the same fashion, and still at a heuristic level, on the event of non-extinction, the nonlinear CSBP $X$ fluctuates, as $t$ converges to $T^{+}_{\infty}$, along the explosive deterministic path $(x(t))_{t\geq 0}$ satisfying
\[
\ddr x(t)=-\psi'(0)\cdot R(x(t))\,\ddr t,\quad x(0)>0,
\]
and the speeds of explosion, i.e. the behavior of $X(T^{+}_{\infty}-t)$, for different types of rate functions, have been investigated in \cite{LiZ2021}. The techniques designed in \cite{LiZ2021} are mostly relying on the fact that the process $\xi$ has a stationary overshoot distribution under condition (\ref{finite-mean}). Since the latter does not exist when $-\psi'(0)=\infty$, most of the arguments explained above break down, including the question of explosion itself (i.e. the finiteness of $\eta(\infty)$). Moreover, there is no  deterministic path clearly defined around which the process  oscillates before its explosion. The problems of finding a sharp condition for explosion and how a nonlinear CSBP behaves near its explosion time in the case of infinite mean (hence with very large jumps) require therefore a study on their own and are the principal aims of this paper.
\medskip

Our first result is a necessary and sufficient condition (in terms of an integral test) for explosion of nonlinear CSBPs when the rate function $R$ is subject to a certain assumption on its growth. Note that no assumption is made on the mean in the following theorem. For any monotone function $H$, we denote by $\ddr H$ the associated Stieljes measure\footnote{We will also use the notation $H(\ddr z)$}: $\int_x^y\ddr H(z)=H(y)-H(z)$ for $x<y$.
\begin{thm}[Explosion criterion]\label{thmexplosiong1}

If $R$ is increasing and
	\begin{equation}\label{eqn:R1}
		\limsup_{x\to\infty}\frac{R(x+a)}{R(x)}<\infty\quad\text{for every $a>0$},
	\end{equation}
	then for any $x\in (0,\infty)$,
	\begin{equation}\label{explosionequiv}
	\P_{x}(T^{+}_{\infty}<\infty)>0
	\text{ if and only if }
	\int^{\infty}\frac{-1}{\psi(1/y)}\ddr \Big(-\frac{1}{R(y)}\Big)<\infty.
	\end{equation}
	Moreover, when \eqref{explosionequiv} holds, one has $\mathbb{P}_x(T_\infty^+<\infty|T_a^{-}=\infty)=1$ for any $a\in (0,x)$.
\end{thm}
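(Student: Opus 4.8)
The plan is to reduce the statement, via the Lamperti time-change \eqref{perpetualintegral}, to a perpetual-integral criterion for the parent SPLP $\xi$ and to reformulate it through first-passage times. On the non-extinction event $\{\tau_0^-=\infty\}$ (which, since $\xi$ drifts to $+\infty$, has probability one when started high enough, and in any case happens with positive probability from any $x>0$), explosion of $X$ is equivalent to $\eta(\infty)=\int_0^\infty \ddr s/R(\xi_s)<\infty$. So the whole theorem is a statement about finiteness of this perpetual integral. First I would introduce the successive passage levels of $\xi$ above the integers (or above an arithmetic progression with step $a$), say $\sigma_n:=\tau_{na}^+$, and decompose the integral as $\sum_{n} \int_{\sigma_n}^{\sigma_{n+1}} \ddr s/R(\xi_s)$. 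Because $R$ is increasing, on the block $[\sigma_n,\sigma_{n+1})$ we have $\xi_s\in[na,\infty)$, so $1/R(\xi_s)\le 1/R(na)$; together with the growth condition \eqref{eqn:R1} (which gives $R(na)\asymp R((n+1)a)\asymp R(\xi_{\sigma_{n+1}})$ up to a bounded factor, modulo the overshoot at level $(n+1)a$), the $n$-th block integral is comparable to $(\sigma_{n+1}-\sigma_n)/R(na)$. This turns finiteness of $\eta(\infty)$ into finiteness of $\sum_n (\sigma_{n+1}-\sigma_n)/R(na)$.

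The next step is to control the increments $\sigma_{n+1}-\sigma_n$, i.e. the time it takes $\xi$ to cross from level $na$ (plus its overshoot) to level $(n+1)a$. By the strong Markov property these are independent, and each is stochastically dominated by (and dominates, by starting exactly at $na$) the first passage time $\tau_a^+$ of $\xi$ started from $0$ — up to the overshoot, which one handles either by noting the overshoot is $\ge 0$ so $\sigma_{n+1}-\sigma_n$ is dominated by $\tau_a^+$ under $\mathbb P_0$, and bounded below by a passage time over a possibly larger gap. The relevant quantity is then $\mathbb E_0[\tau_a^+\wedge 1]$ or, more robustly, the Laplace-transform behaviour: for an SPLP drifting to $+\infty$, $\mathbb E_0[e^{-q\tau_a^+}]=e^{-a\Phi(q)}$ where $\Phi$ is the inverse of $\psi$ on $[0,\infty)$, and in particular $\mathbb E_0[\tau_a^+]$ may be infinite (precisely when $\psi'(0)=-\infty$). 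The key computation is to show that $\sum_n (\sigma_{n+1}-\sigma_n)/R(na)<\infty$ a.s.\ if and only if $\sum_n \mathbb E_0[\tau_a^+\wedge c_n]/R(na)<\infty$ for a suitable truncation, and then to identify $\int^\infty \tfrac{-1}{\psi(1/y)}\,\ddr(-1/R(y))$ as exactly this series in integral form. Here the identity $\int_0^\infty \mathbb P_0(\tau_a^+>t)\,\ddr t=\mathbb E_0[\tau_a^+]$ and the scaling $\mathbb P_0(\tau_a^+>t)$ in $a$, combined with a Fubini/summation-by-parts argument against $\ddr(-1/R(y))$, should produce $\int^\infty \mathbb E_0[\tau_{\ddr y}^+\text{-type kernel}]/R(y)$, and the L\'evy--Khinchin/fluctuation identity for SPLP relating passage-time tails to $-1/\psi(1/y)$ (this is the standard perpetual-integral heuristic: $\xi$ spends roughly time $\sim -1/\psi(1/y)$ near level $y$ when it has infinite mean) closes the loop. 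I would lean on the cited perpetual-integral literature (D\"oring--Kyprianou \cite{Doring2015}, Kolb--Savov \cite{Kolb2020}) for the precise equivalence $\int_0^\infty \ddr s/R(\xi_s)<\infty \iff \int^\infty \ddr y/(R(y)\,|\psi(1/y)|^{-1})^{-1}<\infty$ rather than re-derive it.

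Finally, for the dichotomy in the last sentence — that when \eqref{explosionequiv} holds, $\mathbb P_x(T_\infty^+<\infty\mid T_a^-=\infty)=1$ — the point is that the series $\sum_n (\sigma_{n+1}-\sigma_n)/R(na)$ is a sum of independent nonnegative terms, so by Kolmogorov's zero--one law its finiteness is a tail event of probability $0$ or $1$; convergence of the corresponding expectation series (which is what \eqref{explosionequiv} asserts) forces the sum to be finite almost surely, hence $\eta(\infty)<\infty$ a.s.\ on the event that $\xi$ never goes below $a$, which is $\{T_a^-=\infty\}$. Conversely if the integral diverges then by the Borel--Cantelli-type lower bound (using that infinitely many blocks contribute a non-negligible amount, or the three-series theorem) the sum is infinite a.s., giving the "only if". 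The main obstacle I anticipate is making the comparison between the random block sum and the deterministic integral test genuinely tight in both directions under only the one-sided growth hypothesis \eqref{eqn:R1}: the overshoots of $\xi$ across the levels $na$ are not bounded (infinite mean means the overshoot distribution need not even be tight), so one cannot simply say $\xi_{\sigma_n}\approx na$. The fix is to absorb the overshoot into the increasing rearrangement — since $R$ is increasing, a large overshoot only makes $1/R(\xi_s)$ smaller on that block, so for the "if" direction the upper bound $1/R(na)$ is safe; for the "only if" direction one needs a matching lower bound, which requires showing the overshoot is not too large too often, e.g. via the renewal structure of ascending ladder heights of $\xi$ and condition \eqref{eqn:R1} ruling out $R$ growing so fast that a single big overshoot kills the divergence. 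Handling this overshoot bookkeeping carefully is where the real work lies.
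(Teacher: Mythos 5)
Your reduction to the perpetual integral $\eta(\infty)$ is the right starting point, but the core of your argument has a genuine gap. The plan hinges on (i) a two-sided comparison of each block $\int_{\sigma_n}^{\sigma_{n+1}}\ddr s/R(\xi_s)$ with $(\sigma_{n+1}-\sigma_n)/R(na)$, and (ii) citing the perpetual-integral literature for ``the precise equivalence'' with $\int^{\infty}\frac{-1}{\psi(1/y)}\,\ddr\big(-\frac{1}{R(y)}\big)$. Neither step works. In the infinite-mean regime the overshoot over level $na$ is typically of order $na$ or larger (cf.\ Proposition \ref{prop:overshoot}: $\xi(\tau_x^+)/x\Rightarrow\chi_\alpha$, and for $\alpha=0$ even $x=o(\xi(\tau_x^+))$), so $\xi_{\sigma_n}$ is nowhere near $na$, many blocks are empty, and condition \eqref{eqn:R1} — which only controls the \emph{additive} shift $R(x+a)/R(x)$ — cannot compare $R(na)$ with $R(\xi_{\sigma_n})$. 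You flag this overshoot bookkeeping as ``where the real work lies'', but that missing mechanism is essentially the theorem itself. Deferring it to \cite{Doring2015} and \cite{Kolb2020} is circular: D\"oring--Kyprianou's test $\int^\infty \ddr y/R(y)<\infty$ requires finite mean (stationary overshoot), and Kolb--Savov (with \cite{LPSZ2022}) provide only the reduction $\P_x(\eta(\infty)<\infty)>0\iff \E_x[\eta(\tau_a^-)]<\infty$ together with the conditional zero--one law — not the $\psi$-dependent integral test, which is new here. The paper's proof goes exactly where you stopped: it uses that cited reduction and then \emph{computes} $\E_x[\eta(\tau_a^-)]=U_a(1/R)(x)$ via the killed potential measure, the ladder-height renewal function and the Tauberian estimate $\mathcal{U}(x)\asymp 1/\Phi(1/x)\asymp -1/\psi(1/x)$, using monotonicity of $R$ and \eqref{eqn:R1} only to replace $R(z+y)$ by $R(z)$ for a \emph{fixed bounded} shift $y\in[a,x]$, and Fubini--Tonelli to convert to the Stieltjes integral against $\ddr(-1/R)$. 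That occupation-measure computation is what replaces your per-level bookkeeping and automatically absorbs the overshoots.

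Two further errors in the outline. First, $\E_0[e^{-q\tau_a^+}]=e^{-a\Phi(q)}$ is the spectrally \emph{negative} (upward-creeping) identity; for the spectrally positive $\xi$ of this paper, upward passage occurs by a jump with overshoot and obeys no such formula (it is the downward times $\tau_a^-$ that satisfy $\E_x[e^{-q\tau_a^-}]=e^{-(x-a)\phi(q)}$). Relatedly, $\E_0[\tau_a^+]$ is finite whenever $p>0$ (it is bounded by the expected occupation time of $(-\infty,a]$, which is finite since the potential density is bounded and integrable at $-\infty$), so ``infinite precisely when $\psi'(0)=-\infty$'' is the wrong dichotomy — infinite mean makes upward passage \emph{faster}, not slower. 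Second, the increments $\sigma_{n+1}-\sigma_n$ are not independent: the law of each depends on the overshoot position $\xi_{\sigma_n}$, so Kolmogorov's zero--one law does not apply directly to your series; the almost-sure dichotomy conditional on $\{T_a^-=\infty\}$ is precisely what \cite{Kolb2020} and \cite{LPSZ2022} supply and what the paper invokes for the last assertion of the theorem.
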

\begin{rmk}
By condition \eqref{eqn:R1}, function $R$ cannot be rapidly varying. For instance, the exponential functions are excluded.
\end{rmk}
\begin{rmk}\label{rmk:1.1}
The setting of a classical CSBP is covered by the theorem. Indeed the identity function $R(y)=y$, for all $y\geq 0$, satisfies \eqref{eqn:R1} and the integral condition in \eqref{explosionequiv} can be rewritten, with the help of change of variable, $u=1/y$, as $\int_0\frac{\ddr u}{-\psi(u)}<\infty$. We recover here the classical condition for explosion of a CSBP; see Grey \cite{Grey} and the forthcoming Section \ref{sec:CSBP} for more details.
\end{rmk}

The study of the behavior prior to explosion  requires to characterize the asymptotic overshoot  $X(T^{+}_{x})-x$ as $x\to \infty$. In the case of infinite mean, the latter tends to $\infty$ in probability and we have to look for some explicit scaling limits. We then need to impose some assumption of regular variations on both the rate function and the distribution of the large jumps of the parent process $\xi$.

Let $\mathcal{R}_{\beta}$ be the set of regularly varying functions with parameter $\beta$; see Section \ref{RVfunction} for more details on those functions. In the next statements, we shall mainly focus on nonlinear CSBPs with a branching mechanism and a rate function satisfying the following regular variation properties:
\begin{center}
\hypertarget{asp}{$\mathbb{H}$:}
$\quad -\psi\in\mathcal{R}_{\alpha}$ at $0$
and $R\in\mathcal{R}_{\beta}$ at $\infty$ for some $\alpha\in[0,1], \beta>0$.
\end{center}
For $\alpha\in[0,1)$, we always have $\psi'(0)=-\infty$. The case $\alpha=1$ is a \textit{critical} case for which both $-\psi'(0)=\infty$ and $-\psi'(0)\in (0,\infty)$ are possible. Notice also that any function $R$ regularly varying near $\infty$ satisfies \eqref{eqn:R1}. We will see in the next corollary that the integral test in Theorem \ref{thmexplosiong1}  can  be further simplified in this setting. We then study the explosion behavior according to differential values of parameters $\alpha$ and $\beta$ when assumption \asp \ holds.
\begin{cor}\label{cor:1}
If $R\in\mathcal{R}_{\beta}$ at $\infty$ for some $\beta>0$,
then for any $x\in (0,\infty)$, we have
\begin{equation}
\label{explosionconditionpsiR}
\P_{x}(T^{+}_{\infty}<\infty)>0
	\text{ if and only if }
\mathcal{E}:=\int^{\infty}\frac{-\ddr y}{yR(y)\psi(1/y)}<\infty.
\end{equation}
	
If assumption \asp\ is in force, we further have
	\begin{enumerate}
		\item if $\alpha>\beta>0$, then $\P_{x}\big(T_{\infty}^{+}<\infty\big)=0$;
		\item if $\alpha=\beta>0$, then $\P_{x}\big(T_{\infty}^{+}<\infty\big)>0$ if and only if $\mathcal{E}<\infty$;
		\item if $\beta>\alpha\ge0$, then $\P_{x}\big(T_{\infty}^{+}<\infty\big)>0$.
	\end{enumerate}
\end{cor}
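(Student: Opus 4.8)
The plan is to derive both assertions from Theorem~\ref{thmexplosiong1} together with standard facts on regular variation. Throughout write $W(y):=1/R(y)$ and $\phi(y):=-1/\psi(1/y)$. Since $R\in\mathcal{R}_\beta$ with $\beta>0$, the function $W$ is positive and, for $y$ large, decreasing with $W(\infty)=0$; since $\psi$ is convex with $\psi(0)=0$ and $-\psi$ is increasing near $0$, the function $\phi$ is positive and, for $y$ large, increasing. With this notation the integral test \eqref{explosionequiv} is $\int^{\infty}\phi\,\ddr(-W)<\infty$, whereas $\mathcal{E}=\int^{\infty}\phi(y)W(y)\,\ddr y/y$.

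The first step is to show that these two integrals are simultaneously finite; then the first assertion follows from Theorem~\ref{thmexplosiong1}, which applies because every $R\in\mathcal{R}_\beta$ with $\beta>0$ satisfies \eqref{eqn:R1}, and because the explosion property is unchanged if $R$ is replaced by an asymptotically equivalent increasing member of $\mathcal{R}_\beta$, so the monotonicity hypothesis of the theorem costs nothing. To compare the integrals I would decompose along dyadic blocks $[2^n,2^{n+1}]$. On such a block, monotonicity of $\phi$ gives $\phi(2^n)\big(W(2^n)-W(2^{n+1})\big)\le\int_{2^n}^{2^{n+1}}\phi\,\ddr(-W)\le\phi(2^{n+1})\big(W(2^n)-W(2^{n+1})\big)$, and, by the uniform convergence theorem for regularly varying functions together with $\int_{2^n}^{2^{n+1}}\ddr y/y=\log2$, the integral $\int_{2^n}^{2^{n+1}}\phi(y)W(y)\,\ddr y/y$ is bounded below and above by fixed constant multiples of $\phi(2^n)W(2^n)$ and $\phi(2^{n+1})W(2^n)$. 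Because $W\in\mathcal{R}_{-\beta}$ with $\beta>0$, one has $W(2^{n+1})/W(2^n)\to2^{-\beta}\in(0,1)$; hence $W(2^n)-W(2^{n+1})\asymp W(2^n)\asymp W(2^{n+1})$, so the two series $\sum_n\phi(2^n)W(2^n)$ and $\sum_n\phi(2^{n+1})W(2^n)$ converge or diverge together (the latter being, up to constants and a shift of index, $\sum_m\phi(2^m)W(2^m)$). Both integrals are sandwiched between these two series, which yields the desired equivalence and hence the first assertion $\P_x(T^+_\infty<\infty)>0\iff\mathcal{E}<\infty$.

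For the three regimes, assume \asp. From $-\psi\in\mathcal{R}_\alpha$ at $0$ it follows that $y\mapsto-\psi(1/y)\in\mathcal{R}_{-\alpha}$ at $\infty$, hence $\phi\in\mathcal{R}_{\alpha}$ at $\infty$; combined with $W\in\mathcal{R}_{-\beta}$, the integrand of $\mathcal{E}$, namely $y\mapsto\phi(y)W(y)/y$, belongs to $\mathcal{R}_{\alpha-\beta-1}$ at $\infty$. By Karamata's theorem, $\int^{\infty}$ of a function in $\mathcal{R}_\rho$ is finite if $\rho<-1$ and infinite if $\rho>-1$. Hence if $\alpha>\beta$ then $\alpha-\beta-1>-1$, so $\mathcal{E}=\infty$ and, by the first assertion, $\P_x(T^+_\infty<\infty)=0$, which is (1); if $\beta>\alpha\ge0$ then $\alpha-\beta-1<-1$, so $\mathcal{E}<\infty$ and $\P_x(T^+_\infty<\infty)>0$, which is (3) (when $\alpha=0$, $\phi$ is slowly varying and $\rho=-\beta-1<-1$, covered by the same computation). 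Finally (2), the case $\alpha=\beta>0$, is exactly the first assertion specialized to this index, for which the integrand lies in $\mathcal{R}_{-1}$ and both behaviors genuinely occur according to the slowly varying factors.

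The Karamata computations are routine; the crux is the first step, the equivalence of the two integral tests, where $\psi$ is \emph{not} assumed regularly varying and so Karamata cannot be invoked on \eqref{explosionequiv} directly — the dyadic comparison, which uses only monotonicity of $\phi$ and regular variation of $W$, is what carries it through. A secondary point to check is the reduction to an increasing rate function needed to apply Theorem~\ref{thmexplosiong1}.
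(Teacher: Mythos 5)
Your argument is correct, and its overall architecture coincides with the paper's: reduce to Theorem \ref{thmexplosiong1} (noting that regular variation gives \eqref{eqn:R1} for free), identify the Stieltjes test in \eqref{explosionequiv} with $\mathcal{E}$, and then settle the three regimes by the Karamata index count $\alpha-\beta-1\lessgtr-1$, exactly as in the paper. Where you differ is in how the two integral tests are matched, and in how the monotonicity hypothesis on $R$ is discharged. The paper does both in one stroke via Lemma \ref{lem:f-}: since $1/R(x)\sim f^{\downarrow}(x):=\beta\int_x^{\infty}\frac{\ddr z}{zR(z)}$ and $f^{\downarrow}$ is decreasing with $\ddr(-f^{\downarrow})(y)=\beta\frac{\ddr y}{yR(y)}$, substituting $f^{\downarrow}$ for $1/R$ in the chain of equivalences \eqref{eqn:explode1-4} from the proof of the theorem turns the Stieltjes test literally into $\beta\mathcal{E}$; no increasing modification of $R$ and no block decomposition are needed. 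You instead replace $R$ by an increasing asymptotically equivalent version so as to apply the theorem as stated, and then run a dyadic sandwich between $\int^{\infty}\phi\,\ddr(-W)$ and $\int^{\infty}\phi(y)W(y)\,\ddr y/y$ using only eventual monotonicity of $\phi(y)=-1/\psi(1/y)$ and regular variation of $W$; this is more hands-on but sound, and it has the mild virtue of not using Lemma \ref{lem:f-} explicitly. Two points you should tighten: (i) regular variation of index $\beta>0$ does \emph{not} by itself make $W=1/R$ eventually decreasing, so the opening sentence is inaccurate as stated --- it is only after your replacement of $R$ by an increasing equivalent (whose existence is precisely Lemma \ref{lem:f-}, via $1/f^{\downarrow}$) that the dyadic bounds with $\ddr(-W)$ a positive measure make sense; and you should also record why $\phi$ is eventually increasing (convexity of $\psi$ with $\psi'(0+)<0$, or monotonicity of $-\psi$ in the subordinator case). (ii) The reduction ``explosion is unchanged under $R\mapsto\tilde R$ with $\tilde R\sim R$'' deserves a line of proof rather than a flag: on the non-extinction event the path of $\xi$ stays above a positive (random) level, where $R/\tilde R$ is bounded away from $0$ and $\infty$, so the two perpetual integrals $\int_0^{\infty}\ddr s/R(\xi_s)$ and $\int_0^{\infty}\ddr s/\tilde R(\xi_s)$ are finite simultaneously, whence the two explosion probabilities are simultaneously positive.
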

\begin{rmk} The three cases in Corollary \ref{cor:1} cover different situations:
\begin{enumerate}
\item When $1\geq\alpha>\beta>0$, the rate function $R$ is slowing down the jump rate so much that the explosion is prevented. The process $X$ is in this case transient and nonexplosive. We shall rule out this case in the next.
\item In the critical case $\alpha=\beta$, the power functions in $R(y)$ and $\psi(1/y)$ cancel out and whether explosion occurs or not depends on the slowly varying parts in $R$ and $\psi$. For instance, if $\psi(s)=s\log s$ (Neveu's branching mechanism) and $R(y)=yL(y)$ for some slowly varying function $L$ at $\infty$, then the process explodes if and only if $\mathcal{E}=\int^{\infty}\frac{\ddr y}{yL(y)\log(y)}<\infty$.
\item When $\beta>\alpha\geq 0$, the explosion occurs. More precisely, when $\beta> 1$, the rate function itself can cause the explosion since  $\int^\infty \frac{\ddr y}{R(y)}<\infty$. When $\beta \le1$ (the classical CSBP with $\beta=1$ falls into this case), the rate function itself ``can not bring" the process to $\infty$ in finite time since $\int^\infty \frac{\ddr y}{R(y)}=\infty$, and the parent process is necessary to have extremely large jumps for explosion to occur. 
\end{enumerate}
\end{rmk}
\medskip
We now present our main results on
the ``speeds of explosion" for the nonlinear CSBPs satisfying the  assumption \asp.  We will study the asymptotics of $X(T^{+}_\infty-t)$ when $t$ goes to $0+$, given that $T^{+}_\infty<\infty$. Different renormalisations that lead to convergence in law or convergence in probability
will be obtained. The limit law of $X(T^{+}_\infty-t)$ after renormalisation will be expressed with the help of the following two random variables:
\begin{itemize}
	\item For $\alpha\in (0,1)$, let $\chi_{\alpha}$ be a random variable taking values in $[1,\infty)$ with probability law
	\begin{equation}\label{defn:chi}
		\P(\chi_{\alpha}>z)=\frac{\sin\alpha\pi}{\pi}\int_{0}^{1}u^{\alpha-1}(z-u)^{-\alpha} \ddr u\quad\text{for $z\ge1$}.
	\end{equation}
	We also set $\chi_\alpha=1$ a.s. for $\alpha=1$ and $\chi_\alpha=\infty$ a.s. for $\alpha=0$.
	\item For $\alpha\in [0,1]$ and $\beta>\alpha$, let $\varrho_{\alpha,\beta}$ be a nonnegative random variable with moment generating function given by
	\begin{equation}\label{defn:rho}
		 M_{\varrho_{\alpha,\beta}}(\theta):=\E[e^{\theta\varrho_{\alpha,\beta}}]=1+\sum_{n\ge1}\frac{\theta^{n}}{n!}\Big(\prod_{k=1}^{n}\frac{\Gamma(k(\beta-\alpha)+1)}{\Gamma(k(\beta-\alpha)+\alpha)}\Big)\quad\text{ for } |\theta|<\frac{1}{\beta}.
	\end{equation}
When $\alpha=1$, $\varrho_{\alpha,\beta}\equiv 1$ a.s. and when $\alpha=0$, $\varrho_{\alpha,\beta}$ is exponentially distributed with parameter $1/\beta$. It is also notable that when $\beta=1$ and $0\leq \alpha<1$, \eqref{defn:rho} can be simplified into
\[M_{\varrho_{\alpha,1}}(\theta)=\sum_{n\geq 0}\frac{\theta^{n}}{n!}\Gamma(1+n(1-\alpha)),\]
and we recover the moment-generating function of the Weibull distribution with parameter $\frac{1}{1-\alpha}$, namely the random variable $\rho_{\alpha,1}$ has cumulative distribution function
\[\mathbb{P}(\rho_{\alpha,1}\leq t)=1-e^{-t^{\frac{1}{1-\alpha}}}, \text{  for all }t\geq 0.\]
\end{itemize}
The laws of $\chi_\alpha$ and $\varrho_{\alpha,\beta}$ have already appeared in the literature and can be expressed with the help of a stable subordinator through, respectively, its renormalized asymptotic overshoot and a certain perpetual integral; see the forthcoming Propositions \ref{prop:overshoot} and \ref{prop:rho} for more details.\\

From now on, we shall always work on the event of  explosion. We write $\P_{x}(\cdot|T_{\infty}^{+}<\infty)$ for the conditional distribution given $X(0)=x$ and $T_{\infty}^{+}<\infty$. The convergence in distribution is denoted by ``$\Rightarrow$''. Note that when explosion happens, we always have $\mathcal{E}<\infty$ and thus
\begin{equation}\label{defn:s}
	\varphi(x):=\int_{x}^{\infty}\frac{-\ddr y}{yR(y)\psi(1/y)}<\infty\quad\text{for $x>1/p$},
\end{equation}
where $p:=\inf\{s>0: \psi(s)>0\}$ is the largest zero of the branching mechanism (if $\psi(s)<0$ for all $s>0$, $p=\infty$ and $1/p=0$). Background on the function $\psi$ is provided in Subsection \ref{ssec:2.1}. Note also that the inverse function of $\varphi$, \[\varphi^{-1}(t):=\inf\{x>0, \varphi(x)=t\},\] is well defined for any $t<\varphi(p^{-1})$.
\\

\begin{thm}[Speeds of explosion]\label{thm:speed}
Assume that \asp\ holds and $\mathcal{E}<\infty$ in \eqref{explosionconditionpsiR}.
We have the following regimes of weak convergence for different choices of $\beta\geq \alpha\geq 0$.
\begin{enumerate}
\item \label{case1} If $\beta>\alpha>0$, then \begin{equation}
\label{linearspeed1}\frac{X(T_{\infty}^{+}-t)}{\varphi^{-1}(t)}\bigg|_{\P_{1}(\cdot|T_{\infty}^{+}<\infty)}
\Rightarrow \frac{1}{\chi_{\alpha}}\times\varrho_{\alpha,\beta}^{\frac{1}{\beta-\alpha}}\quad\text{as $t\to 0+$},
 \end{equation}
where $\chi_{\alpha}$ and $\varrho_{\alpha,\beta}$ are two independent random variables whose probability laws are specified in \eqref{defn:chi} and \eqref{defn:rho}, respectively.
\item \label{case2}
If $\beta=\alpha>0$ and $\mathcal{E}<\infty$, then
\[\frac{X(T_{\infty}^{+}-t)}{\varphi^{-1}(\Gamma(\alpha) t)}\Big|_{\P_{1}(\cdot|T_{\infty}^{+}<\infty)}
\rightarrow 1\text{ in probability as $t\to0+$}.
\]
\item\label{case3}
If $\beta>\alpha=0$, then
\begin{equation}\label{nonlinearspeed}
\frac{\psi(1/X(T^{+}_{\infty}-t))}{\psi(1/\varphi^{-1}(t))}\bigg|_{\P_{1}(\cdot|T_{\infty}^{+}<\infty)}
\Rightarrow \frac{1}{\Lambda}\quad\text{as $t\to 0+$},
\end{equation}
where $\Lambda$ is a uniform random variable on $(0,1)$.
\end{enumerate}
\end{thm}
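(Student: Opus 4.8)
\emph{Plan of proof.} The strategy is to transfer, through the Lamperti-type time change \eqref{defn:X}, the behaviour of $X$ just before its explosion to the behaviour of the parent L\'evy process $\xi$ at large times and high levels, where under \asp\ it resembles a stable process (a stable subordinator when $\alpha\in(0,1)$). On $\{T_\infty^+<\infty\}$ one has $T_\infty^+=\eta(\infty)$ and $X(T_\infty^+-t)=\xi\big(\eta^{-1}(\eta(\infty)-t)\big)$, and the time $u_t:=\eta^{-1}(\eta(\infty)-t)$ is characterised by $\int_{u_t}^\infty\ddr s/R(\xi(s))=t$. Hence for every $z>0$,
\[
\{X(T_\infty^+-t)>z\}\subseteq\Big\{\int_{\tau_z^+}^{\infty}\frac{\ddr s}{R(\xi(s))}\ge t\Big\},
\]
and the two events differ only when $\xi$ returns below $z$ after $\tau_z^+$. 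The first step is to show that, conditionally on $\{T_\infty^+<\infty\}$ (which has positive probability by Corollary~\ref{cor:1} and on which non-extinction forces explosion by Theorem~\ref{thmexplosiong1}), the probability of this discrepancy vanishes as $z\to\infty$ — using that $\xi$ drifts to $+\infty$ with light left tail of the running infimum while the first-passage overshoot over $z$ grows with $z$, the finite-mean boundary case needing a finer estimate of the time $\xi$ spends below $z$ after $\tau_z^+$. Writing $\mathcal I(z):=\int_{\tau_z^+}^\infty\ddr s/R(\xi(s))$, it then suffices to compute $\lim_{t\to0+}\P_1\big(\mathcal I(z)>t\mid T_\infty^+<\infty\big)$ along $z=c\,\varphi^{-1}(t)$, $c>0$.

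The heart of the argument is the asymptotics of $\mathcal I(z)$ as $z\to\infty$, obtained by the strong Markov property at $\tau_z^+$. Given $\xi(\tau_z^+)=y$, $\mathcal I(z)$ is a perpetual integral of $\xi$ restarted from $y$; rescaling space by $y$ and time by the natural norming attached to $-\psi\in\mathcal R_\alpha$ at $0$, the rescaled parent process converges weakly to a stable process $Z$ started from $1$, and, since $R\in\mathcal R_\beta$ at $\infty$, I would deduce $\mathcal I(z)\big/\varphi(\xi(\tau_z^+))\Rightarrow\varrho_{\alpha,\beta}$, the limit being (a suitable multiple of) the renormalised perpetual integral $\int_0^\infty Z_r^{-\beta}\,\ddr r$, whose law is exactly \eqref{defn:rho} by Proposition~\ref{prop:rho}; its almost sure finiteness, which holds precisely because $\beta>\alpha$, is what makes the conditioning on explosion immaterial in the limit. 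In parallel, the asymptotic overshoot satisfies $\xi(\tau_z^+)/z\Rightarrow\chi_\alpha$ by Proposition~\ref{prop:overshoot}, and the key decoupling is that the limit of $\mathcal I(z)/\varphi(\xi(\tau_z^+))$ is a scale-invariant functional of the rescaled post-jump process, hence does not see the overshoot, so that $\big(\xi(\tau_z^+)/z,\ \mathcal I(z)/\varphi(\xi(\tau_z^+))\big)\Rightarrow(\chi_\alpha,\varrho_{\alpha,\beta})$ with independent coordinates. As $\varphi\in\mathcal R_{-(\beta-\alpha)}$ at $\infty$ by \eqref{defn:s} together with \asp, one has $\varphi(\xi(\tau_z^+))\sim\varphi(z)\big(\xi(\tau_z^+)/z\big)^{-(\beta-\alpha)}$, whence $\mathcal I(z)/\varphi(z)\Rightarrow\chi_\alpha^{-(\beta-\alpha)}\varrho_{\alpha,\beta}$; taking $z=c\,\varphi^{-1}(t)$ and using $\varphi\big(c\,\varphi^{-1}(t)\big)\sim c^{-(\beta-\alpha)}t$, the event $\{\mathcal I(c\varphi^{-1}(t))>t\}$ becomes in the limit $\{\varrho_{\alpha,\beta}^{1/(\beta-\alpha)}/\chi_\alpha>c\}$, which for every continuity point $c$ yields \eqref{linearspeed1} and settles case~\ref{case1}.

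In case~\ref{case2} the exponents cancel: $\chi_\alpha^{-(\beta-\alpha)}\equiv1$, and \eqref{defn:rho} with $\beta=\alpha$ collapses to $\varrho_{\alpha,\alpha}=1/\Gamma(\alpha)$ a.s., so the same machinery gives $\mathcal I(z)/\varphi(z)\to1/\Gamma(\alpha)$ in probability; since $\varphi$ is now slowly varying, all remaining randomness is absorbed upon inversion, giving convergence in probability to $1$ after substituting $\varphi^{-1}(\Gamma(\alpha)t)$ — the delicate point being to show that the rate of concentration of $\mathcal I(z)/\varphi(z)$ around $1/\Gamma(\alpha)$ dominates the (slowly varying) rate at which $\varphi\big(c\varphi^{-1}(\Gamma(\alpha)t)\big)/t\to1$, so that the inversion is stable. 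For case~\ref{case3}, $\alpha=0$, the overshoot blows up ($\chi_0=\infty$) and no linear renormalisation survives; the nonlinear map $\psi(1/\cdot)$, which is slowly varying at $\infty$, is exactly tailored so that the successive first-passage overshoot ratios $\psi(1/\xi(\tau_z^+))/\psi(1/z)$ converge in law to a uniform random variable on $(0,1)$ (the $\alpha\downarrow0$ degeneration of \eqref{defn:chi} read on this scale). Consequently $-\log\big(-\psi(1/\cdot)\big)$, evaluated along the successive passage levels, is asymptotically a standard Poisson process; identifying $X(T_\infty^+-t)$ with the passage level whose perpetual-integral budget $\asymp\varphi(\cdot)$ straddles $t$ — equivalently, whose image under $-\log(-\psi(1/\cdot))$ is the last renewal epoch before $-\log\big(-\psi(1/\varphi^{-1}(t))\big)$ — the age limit theorem for a Poisson process shows that this epoch lags the target by an $\mathrm{Exp}(1)$ amount, so $\psi(1/X(T_\infty^+-t))/\psi(1/\varphi^{-1}(t))\Rightarrow e^{\mathrm{Exp}(1)}$, the Pareto law $1/\Lambda$ with $\Lambda$ uniform, i.e.\ \eqref{nonlinearspeed}. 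Here $\varrho_{0,\beta}$, exponential by \eqref{defn:rho}, enters only the per-level budget estimates and, being of order one while the passage levels are spread multiplicatively, perturbs the renewal target by a bounded amount and disappears from the limit.

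The main obstacle is the second step: proving the joint weak convergence of the rescaled overshoot and the rescaled future perpetual integral with asymptotic independence, and doing so uniformly enough in the starting level to substitute both the random overshoot position and the deterministic level $c\varphi^{-1}(t)$. Concretely this requires a coupling of $\xi$ after $\tau_z^+$ with the limiting stable process together with uniform-integrability-type controls guaranteeing that the prelimit perpetual integral converges to $\int_0^\infty Z_r^{-\beta}\,\ddr r$ with no loss of mass near $r=0$ or near $r=\infty$, plus Karamata-type bookkeeping identifying the time-over-space norming with $\varphi$; in case~\ref{case2} this has to be made quantitative, and in case~\ref{case3} it has to be interlaced with the renewal-theoretic analysis of the record levels.
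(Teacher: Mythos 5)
Your skeleton for Case \ref{case1} is the same as the paper's: apply the strong Markov property at $\tau_z^+$, combine the asymptotic overshoot $\xi(\tau_z^+)/z\Rightarrow\chi_\alpha$ (Proposition \ref{prop:overshoot}) with a limit $\eta(\infty)\circ\theta_{\tau_z^+}\big/\varphi(\xi(\tau_z^+))\Rightarrow\varrho_{\alpha,\beta}$, use $\varphi\in\mathcal R_{\alpha-\beta}$ to pass from $\varphi(\xi(\tau_z^+))$ to $\varphi(z)$, invert $\varphi$ along $z=c\,\varphi^{-1}(t)$, and remove the conditioning. The difference — and the gap — is in how the central limit for the renormalised perpetual integral is obtained. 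You propose a coupling/invariance-principle argument (rescale the post-$\tau_z^+$ process to an $\alpha$-stable process $Z$ and deduce convergence of $\int \ddr s/R(\xi(s))$ to $\int_0^\infty Z_r^{-\beta}\ddr r$, identified via Proposition \ref{prop:rho}), but you leave exactly this step as the "main obstacle''. It is not a routine continuity argument: the map $\omega\mapsto\int_0^\infty \omega(r)^{-\beta}\ddr r$ is not continuous for the Skorokhod topology, and one must rule out loss or gain of mass both at small and at large times and show that the correct space-time norming is precisely $\varphi$, uniformly in the (random) starting level — this is comparable in difficulty to the whole theorem. The paper instead proves $\eta(\infty)/\varphi(x)\,\big|_{\P_x(\cdot\,|\,\tau_a^-=\infty)}\Rightarrow\varrho_{\alpha,\beta}$ (Lemma \ref{lem+:wk}) by the method of moments: a recursion for $\E_x[\eta(\infty)^n;\tau_a^-=\infty]$ combined with the potential-kernel asymptotics of Lemma \ref{lem+:uf} and Carleman's condition; the asymptotic independence from the overshoot then falls out of the Markov property because the limit law does not depend on the starting level. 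So for Case \ref{case1} your plan is plausible but its heart is a promissory note.

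Two further points would fail as written. For Case \ref{case2} ($\beta=\alpha$) you invoke "the same machinery'' and read the constant $1/\Gamma(\alpha)$ off \eqref{defn:rho}; but \eqref{defn:rho} is only defined for $\beta>\alpha$, and your limiting functional $\int_0^\infty Z_r^{-\alpha}\ddr r$ is almost surely infinite for an $\alpha$-stable $Z$ (this is exactly the boundary of Proposition \ref{prop:rho}), so the coupling route produces no finite limit: the concentration of $\eta(\infty)/\varphi(x)$ at $1/\Gamma(\alpha)$ under the now slowly varying norming $\varphi$ is a genuinely different phenomenon and requires a separate argument — the paper uses first- and second-moment asymptotics through the modified key estimate Lemma \ref{lem=:uf} (Lemma \ref{lem+:wk2}), since Lemma \ref{lem+:uf} itself breaks down there. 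For Case \ref{case3}, your renewal picture (successive passage levels forming an asymptotic Poisson process on the scale $-\log(-\psi(1/\cdot))$, then the age theorem) is an attractive heuristic but is unproved and delicate: successive overshoot ratios are only asymptotically uniform and are not independent at finite levels, and you would still need a quantitative control of the per-level "budget''. The paper avoids this entirely with a single-overshoot sandwich: using that $\varrho_{0,\beta}$ is exponential to bracket $\P_1(T_\infty^+-T_x^+<t)$ between overshoot probabilities, choosing $x=\bar\Pi^{-1}\big(\bar\Pi(\varphi^{-1}(t))/\lambda\big)$, and applying the tail estimate of Lemma \ref{lem:alpha=0} together with $\bar\Pi(x)\sim\psi(1/x)/(-p)$. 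In summary: right architecture, but the three load-bearing steps (the $\varrho_{\alpha,\beta}$ limit, the critical case $\beta=\alpha$, and the $\alpha=0$ case) are respectively unexecuted, invalid as stated, and heuristic.
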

\begin{rmk}
The order of the renormalisation, which we call the speed of explosion, is different in each of the three regimes in Theorem \ref{thm:speed}.
\begin{enumerate}
	\item For $\beta>\alpha>0$, one has $\varphi\in\mathcal{R}_{\alpha-\beta}$ at $\infty$, so that $\varphi^{-1}\in\mathcal{R}_{\frac{1}{\alpha-\beta}}$ at $0$. The speed of explosion is thus of the order $t^{\frac{1}{\alpha-\beta}}$.

	\item For $\beta=\alpha>0$, one can easily check that the function $\varphi$ is slowly varying at $\infty$. Hence, its inverse $\varphi^{-1}$ is rapidly varying; see the next remark for some examples.
		\item For $\beta>\alpha=0$, the convergence in \eqref{linearspeed1} actually also holds  true. But since $\chi_\alpha=\infty$ a.s., the limit is $0$.
The speed of explosion of $X$ when $\alpha=0$ is thus faster than $\varphi^{-1}$.
The nonlinear renormalisation in \eqref{nonlinearspeed} depends on the form of the slowly varying function $-\psi$. For instance, if $R(x)\sim x^{\beta}$ and $\psi(1/x)\sim -(1/\log x)^r$ as $x\to\infty$ for some $r>0$, then one can check from \eqref{nonlinearspeed} that
\[
\frac{\log X(T^{+}_{\infty}-t)}{\log 1/t}\Big|_{\P_{1}(\cdot|T_{\infty}^{+}<\infty)} \Rightarrow \frac{1}{\beta \Lambda^{1/r}}\quad\text{as $t\to0+$.}
\]
Equivalently,
\begin{equation}\label{loglog}
\log \log X(T^{+}_{\infty}-t)- \log \log 1/t \, \Big|_{\P_{1}(\cdot|T_{\infty}^{+}<\infty)} \Rightarrow \log 1/\beta+\mathbbm{e}_r
\end{equation}
where $\mathbbm{e}_r$ is an exponential random variable with parameter $r$ and we have used the fact that
$\mathbbm{e}_r\overset{D}{=}\log \frac{1}{\Lambda^{1/r}}$.
\end{enumerate}
\end{rmk}
\begin{rmk}
Classical CSBPs, for which $R(y)=y$ and hence $\beta=1$, can belong to any of the three different cases covered by Theorem \ref{thm:speed} according the behaviour of their branching mechanism $\psi$ near $0$. By the change of variable $u=1/y$ in \eqref{defn:s}, one has $\varphi(x)=\int_0^{1/x}\frac{\ddr u}{-\psi(u)}$ for all $x>1/p$.
\begin{enumerate}
\item In Case \ref{case1}, since $\rho_{\alpha,1}$ has a Weibull distribution with parameter $\frac{1}{1-\alpha}$, the random variable $\rho_{\alpha,1}^{\frac{1}{1-\alpha}}$ has a standard exponential law and the limit law in \eqref{linearspeed1} is a mixture of exponential laws with random parameter $\chi_\alpha$. This case is  explored further in Section \ref{sec:CSBP}.
\item Case \ref{case2} with $\alpha=\beta=1$ contains for instance the branching mechanism $\psi$ such that $\psi(x)\underset{x\rightarrow 0}{\sim} -x(\log 1/x)^\gamma$ with $\gamma>1$. Simple calculations give
$\varphi^{-1}(t) \underset{t\rightarrow 0}{\sim} e^{c_\gamma t^{-\frac{1}{\gamma-1}}}$
with $c_\gamma=(\gamma-1)^{-\frac{1}{\gamma-1}}>0$.
\item In Case \ref{case3}, the function $\varphi^{-1}$ can behave differently according to the slowly varying function $\psi$. However, using the fact that $-\psi$ is increasing in a neighbourhood of $0$, one can plainly check that
$\varphi(x)\geq -\frac{\psi(1/x)}{x}=:g(x)$. Since
$\varphi^{-1}(t)\geq g^{-1}(t)$,
we have
\[\frac{X(T_{\infty}^{+}-t)}{g^{-1}(t)}\Big|_{\P_{x}(\cdot|T_{\infty}^{+}<\infty)}
\rightarrow 0 \text{ in probability as $t\to0+$}.\]
Notice that $g^{-1}\in \mathcal{R}_{-1}$ at $0$. So, it is of order $1/t$. Lastly, note that the first term in the limit \eqref{loglog} vanishes in the CSBP case, since $\beta=1$, and we have an exponential law with parameter $r$.
\end{enumerate}
\end{rmk}

\begin{rmk} In Case \ref{case1} with $\alpha=1$, our result also holds in some cases with finite mean and coincide with the result obtained in \cite[Theorem 3.6-(a)]{LiZ2021}. Indeed when $-\psi\in\mathcal{R}_{1}$ and $\E[\xi_{1}]=-\psi'(0)<\infty$, we have $x\psi(1/x)\to-\psi'(0)<\infty$ as $x$ goes to $\infty$. Thus, the explosive condition in \eqref{explosionconditionpsiR} reduces to \eqref{conditionfinitemean}. Moreover, 
$\varphi(x)\underset{x\rightarrow \infty}{\sim} \frac{1}{\E[\xi_{1}]}\int_{x}^{\infty}\frac{\ddr y}{R(y)}$
and in both Case \ref{case1} and Case \ref{case3} of Theorem \ref{thm:speed} for $\alpha=1$ and $-\psi'(0)<\infty$, we have
\[
\frac{X(T_{\infty}^{+}-t)}{\varphi^{-1}(t)}\rightarrow 1\text{ in probability as $t\to0+$}.
\]
No classical branching process falls into this setting.
\end{rmk}

The rest of the paper is organized as follows. In Section \ref{sec:preliminaries}, we gather fundamental facts on the spectrally positive L\'evy processes, their scale functions and their potential measure. We then recall elements on regularly varying functions and state some results about the overshoot of a SPLP with a regularly varying Laplace exponent. A representation of the random variable $\rho_{\alpha,\beta}$ with the help of a stable subordinator is also given. We prove these results as well as Theorems \ref{thmexplosiong1} and \ref{thm:speed} in Section \ref{proofs}.	\\

\noindent \textbf{Notation}: Throughout the paper, we  write $\D f(x)\sim g(x)$ if $f(x)/g(x)\to1$, $f(x)=o(g(x))$ if $f(x)/g(x)\to0$, $f(x)=O(g(x))$ if $\limsup_{x\to\infty}f(x)/g(x)<\infty$ and finally $f(x)\asymp g(x)$ if $f(x)/g(x)$ is bounded away from both $0$ and $\infty$. For any Borel measure $\nu$ on $(0,\infty)$
we write $\bar{\nu}(z):=\int_{z}^{\infty}\nu(\ddr y)$ for all $z>0$. We denote by $\overset{\text D}{=}$ the equality in distribution. Throughout the paper we take the convention $\inf \emptyset=\infty$.

\section{Preliminaries}\label{sec:preliminaries}
\subsection{Spectrally positive L\'evy processes and their scale functions}\label{ssec:2.1}
Let $\xi$ be a spectrally positive L\'evy process, i.e. a L\'evy process with no negative jumps. Its Laplace exponent is well defined and takes the L\'evy-Khintchine form, for $s\ge0$,
\begin{equation}\label{defn:psi}
\psi(s)=\log\E\big[e^{-s\xi(1)}\big]=\gamma s+\frac{\sigma^{2}}{2}s^{2}+\int_{0}^{\infty}\big(e^{-s z}-1+s z\mathbf{1}(z\le1)\big)\nu(\ddr z),
\end{equation}
where $\gamma\in\mathbb{R}, \sigma\ge0$ are constants and $\nu$ is the L\'evy measure, namely, it is a $\sigma$-finite measure on $(0,\infty)$ satisfying $\int_{0}^{\infty}(1\wedge z^{2})\nu(\ddr z)<\infty$.
\begin{itemize}
\item When $\xi$ is not a subordinator, one has $\psi(\infty)=\infty$ and the process $\xi$ may in general oscillates or drifts towards +$\infty$ or $-\infty$. As we are interested in the explosion, we shall focus on the case of a process $\xi$ drifting towards $+\infty$. This is equivalent to the assumption $\mathbb{E}(\xi(1))=-\psi'(0)\in (0,\infty]$. In the non-subordinator case, the right-continuous inverse of $\psi$ defined by $\phi(q):=\inf\{s>0, \psi(s)>q\}$
is well-defined for all $q>0$. Recall also that we set $p=\phi(0)$. Another equivalent condition for the process to drift towards $+\infty$ is $p>0$.
\item When $\xi$ is a subordinator, i.e. $\xi$ has monotone nondecreasing sample paths, the function $\psi$ is non-positive and takes the form
\begin{equation}\label{defn:psisub}
\psi(s)=-\delta s+\int_{0}^{\infty}\big(e^{-s z}-1\big)\nu(\ddr z),
\end{equation}
with $\delta\geq 0$ is the drift and the measure $\nu$ satisfies $\int_0^{\infty}(1\wedge z)\nu(\ddr z)<\infty$. In this setting, $\psi(\infty)=-\infty$ and it is more convenient to work with the function $-\psi$ which satisfies $-\log \mathbb{E}[e^{-s\xi(1)}]=-\psi(s)$.
In the subordinator case, $\phi$ is not defined on $\mathbb{R}^{+}$ and in particular $p=\phi(0)=\inf\emptyset=\infty$.
\end{itemize}
The study of the explosion of the process $X$ is more involved when the parent L\'evy process $\xi$ is not a subordinator, i.e. $0<p<\infty$. This is mainly due to the fluctuations of the process which makes it more difficult to follow its position and to compute its potential measure.\\

\textbf{Non-subordinator case: $0<p<\infty$}. In order to handle the non-subordinator case, we shall need some properties of fluctuation theory. Most of the literature on fluctuations of spectrally one-sided L\'evy processes concerns the spectrally negative cases, that is to say, processes with no positive jumps. We rewrite below some classical fluctuation identities in the spectrally positive setting.  They  can all be found using the fact that if $\xi$ is a SPLP then its dual process $-\xi$ is spectrally negative.

Scale function $W$  plays a key role in the fluctuation theory and will be one of our main tool later on. It is defined as the unique continuous increasing function on $\mathbb{R}^{+}$ satisfying
\begin{equation}\label{defLTscalefunction}
\int_{0}^{\infty}e^{-sz}W(z) \ddr z=\frac{1}{\psi(s)},\quad\text{for $s>p$},
\end{equation}
with the convention $W(z)=0$ for $z<0$. We refer the readers to \cite{Bertoin96book} and \cite{Kyprianou2014book} for more detailed discussions on the spectrally negative L\'evy process and its scale functions. We shall also use the notation $W(\ddr z)$ for the Stieljes measure associated to $W$.

Recall the first passage times of $\xi$, $\tau_a^-$ and $\tau_b^+$, see \eqref{firstpassagetimexi}. For any $x\in (a,b)$, one has the identities
\begin{equation}\label{exitprobability}
\mathbb{P}_x(\tau_a^{-}\leq \tau_b^{+})=\frac{W(b-x)}{W(b-a)}
\quad\text{and}\quad
\mathbb{P}_x(\tau_a^{-}<\infty)
=\underset{b\rightarrow \infty}{\lim} \mathbb{P}_x(\tau_a^{-}\leq \tau_b^{+})
=e^{-p(x-a)}.
\end{equation}
The following expression of the potential measure of $\xi$ killed on leaving the interval $[a,\infty)$ can be found for instance in \cite[Corollary 8.8]{Kyprianou2014book}. For any $a\in\mathbb{R}$ and $x,y\in(a,\infty)$,
\begin{equation}\label{eqn:resl}
U_{a}(x,\ddr y):= \int_{0}^{\infty}\P_{x}( \xi(t)\in \ddr y, t<\tau_{a}^{-}) \ddr t
= \big(e^{p(a-x)}W(y-a)-W(y-x)\big) \ddr y,
\end{equation}
for any $x\geq a$ and where we recall that $p=\phi(0)$. Define for any $y\in \mathbb{R}$
\begin{equation}\label{defn:kappa}
\kappa(y):=e^{py}\phi'(0)-W(y).
\end{equation}
Recall that $W(y)=0$ for $y<0$ so that $\kappa(y)=e^{py}\phi'(0)$ for $y<0$. One has
\begin{equation}\label{eqn:resl1}
U_{a}(x,\ddr y)
=\big(\kappa(y-x)-e^{p(a-x)}\kappa(y-a)\big) \ddr y.
\end{equation}
Similarly, under the assumption $p>0$, the process is transient and its potential measure has a density bounded on $[0,\infty)$, see \cite[Corollary 8.9]{Kyprianou2014book}. When $\xi$ starts from $0$, the latter is given by
\begin{equation}\label{defn:Unonsub}
U(\ddr y):=\int_{0}^{\infty}\P( \xi(t)\in \ddr y) \ddr t
= \kappa(y)\ddr y\end{equation}
and we call $\kappa$ the \textit{potential density} of $\xi$.

The function $\kappa$ is not explicit in general. There is however an equivalence near $0$ of its Laplace transform.
\begin{lem}\label{lem:k}
Suppose that $0<p<\infty$. We have as $s\to0+$
\begin{equation}\label{eqn:hk}
\hat{\kappa}(s):=\int_{0}^{\infty}e^{-sy}\kappa(y)\ddr y
=\int_{0}^{\infty}e^{-sy}U(\ddr y)
\sim \frac{-1}{\psi(s)}.
\end{equation}
\end{lem}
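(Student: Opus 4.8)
The plan is to compute the Laplace transform of $\kappa$ directly from its definition $\kappa(y)=e^{py}\phi'(0)-W(y)$ and to track the behaviour of each term as $s\to 0+$. First I would observe that, since $\xi$ drifts to $+\infty$, we have $p=\phi(0)>0$, and that by \eqref{defLTscalefunction} the Laplace transform $\widehat W(s):=\int_0^\infty e^{-sy}W(y)\,\mathrm{d}y=1/\psi(s)$ holds a priori only for $s>p$. The subtlety is that we want information near $s=0+$, i.e. for $s<p$, where this formula fails because $W$ grows like $e^{py}/\psi'(p)$ and the integral diverges. The term $e^{py}\phi'(0)$ in $\kappa$ is precisely designed to cancel this exponential growth, so that $\kappa$ itself has a convergent Laplace transform on a neighbourhood of $0$. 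The key is therefore to get a clean expression for $\widehat\kappa$.

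The main step is to identify $\widehat\kappa(s)$ for $s$ in the range $0<s<p$ (or to argue by analytic continuation). One route: for $s>p$ we have $\widehat W(s)=1/\psi(s)$, while $\int_0^\infty e^{-sy}e^{py}\,\mathrm{d}y=1/(s-p)$ diverges for $s<p$; so direct termwise transformation is illegitimate near $0$. Instead I would use the potential-measure representation $U(\mathrm{d}y)=\kappa(y)\,\mathrm{d}y=\int_0^\infty \mathbb{P}(\xi(t)\in\mathrm{d}y)\,\mathrm{d}t$ from \eqref{defn:Unonsub} and write, by Tonelli,
\[
\widehat\kappa(s)=\int_0^\infty e^{-sy}U(\mathrm{d}y)=\int_0^\infty \mathbb{E}\big[e^{-s\xi(t)}\big]\,\mathrm{d}t=\int_0^\infty e^{t\psi(s)}\,\mathrm{d}t.
\]
For every $s>0$ with $\psi(s)<0$ — and since $-\psi\in\mathcal R_\alpha$ at $0$ with $\alpha\le 1$, we have $\psi(s)<0$ for all $s\in(0,p)$ — this integral converges and equals $-1/\psi(s)$ exactly. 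This gives $\widehat\kappa(s)=-1/\psi(s)$ for all $0<s<p$, which is in fact an exact identity, not merely an asymptotic one; the stated ``$\sim$'' then follows trivially as $s\to 0+$ (where $\psi(s)\to 0^-$). I should be slightly careful that the Fubini/Tonelli step is justified: the integrand $e^{-sy}$ is nonnegative and $U$ is a genuine (a priori $\sigma$-finite, in fact locally finite with bounded density) measure, so Tonelli applies without integrability concerns, and $\mathbb{E}[e^{-s\xi(t)}]=e^{t\psi(s)}$ is the definition of $\psi$ in \eqref{defn:psi}.

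The main obstacle, and the only real content, is justifying that $\psi(s)<0$ for $s$ in a right-neighbourhood of $0$ so that $\int_0^\infty e^{t\psi(s)}\,\mathrm{d}t$ converges; this is exactly the statement $p>0$, which holds because $\xi$ drifts to $+\infty$ (equivalently $-\psi'(0)\in(0,\infty]$), as recalled in Subsection \ref{ssec:2.1}. Everything else is bookkeeping: one checks $\kappa$ is indeed the density of $U$ via \eqref{defn:Unonsub}, applies Tonelli, and evaluates the elementary $t$-integral. An alternative, more hands-on proof avoiding the potential measure would start from $\widehat\kappa(s)=\phi'(0)/(s-p)-\widehat W(s)$; but since $\widehat W(s)=1/\psi(s)$ is only valid for $s>p$, one would need to analytically continue and combine the pole of $1/\psi$ at $s=p$ (with residue $1/\psi'(p)=\phi'(0)$) against the pole of $\phi'(0)/(s-p)$, showing the poles cancel and the combination extends to $-1/\psi(s)$ on $(0,p)$; this is essentially the same computation dressed up via complex analysis, and I would prefer the probabilistic version above for cleanliness.
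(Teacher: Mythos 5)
The key step of your argument is wrong as written. Your second equality,
\[
\int_0^\infty e^{-sy}\,U(\ddr y)\;=\;\int_0^\infty \E\big[e^{-s\xi(t)}\big]\,\ddr t ,
\]
fails precisely in the situation of the lemma, $0<p<\infty$, because there $\xi$ is \emph{not} a subordinator and its occupation measure charges the negative half-line: $\P_0(\tau_{-a}^-<\infty)=e^{-pa}>0$ for every $a>0$, and the potential density on $(-\infty,0)$ is $\kappa(y)=e^{py}\phi'(0)$. The quantity $\hat\kappa(s)$ in \eqref{eqn:hk} is the transform of the potential measure restricted to $[0,\infty)$, whereas $\int_0^\infty\E[e^{-s\xi(t)}]\,\ddr t$ is the transform over all of $\mathbb{R}$; for $0<s<p$ they differ by $\int_{-\infty}^0 e^{-sy}\kappa(y)\,\ddr y=\phi'(0)/(p-s)$. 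Consequently your claimed ``exact identity'' $\hat\kappa(s)=-1/\psi(s)$ on $(0,p)$ is false: the correct formula is $\hat\kappa(s)=\frac{\phi'(0)}{s-p}-\frac{1}{\psi(s)}$, which is exactly what the paper obtains (first for $s>p$ by termwise Laplace transform of $\kappa=e^{p\cdot}\phi'(0)-W$, then for all $s>0$ by analytic continuation), and it is the reason the lemma asserts only an asymptotic equivalence rather than an identity.

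The good news is that your probabilistic route is easily repaired and then constitutes a genuine alternative to the paper's computation. Tonelli over the whole line does give $\int_0^\infty\E[e^{-s\xi(t)}]\,\ddr t=\int_0^\infty e^{t\psi(s)}\,\ddr t=-1/\psi(s)$ for $0<s<p$, so
\[
\hat\kappa(s)\;=\;\frac{-1}{\psi(s)}\;-\;\frac{\phi'(0)}{p-s},
\]
and the subtracted term stays bounded as $s\to0+$, while $-1/\psi(s)\to\infty$ because $\psi(0)=0$ and $\psi<0$ on $(0,p)$ — which follows from convexity of $\psi$ and $p>0$ alone; note the lemma does not assume $-\psi\in\mathcal{R}_\alpha$, so you should not invoke regular variation here. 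This yields $\hat\kappa(s)=-1/\psi(s)+O(1)\sim-1/\psi(s)$ as required. Finally, the ``alternative, more hands-on proof'' you sketch at the end (the cancellation of the poles of $\phi'(0)/(s-p)$ and $1/\psi$ at $s=p$, then continuation to $(0,\infty)$) is essentially the paper's actual proof.
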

\begin{proof}
One can calculate directly that
\[
\hat{\kappa}(s)=\int_{0}^{\infty}e^{-sy}\kappa(y) \ddr y=\frac{\phi'(0)}{s-p}-\frac{1}{\psi(s)}
=\left(1-\frac{\phi'(0)\psi(s)}{s-p}\right)\frac{1}{-\psi(s)}
\sim \frac{-1}{\psi(s)}\text{\ at $0$},
\]
where the identity above holds first for $s>p$, and then for all $s>0$ by analytic extension and the equivalence follows readily since $\psi(0)=0$.
\end{proof}

\textbf{Subordinator case:
$p=\infty$}. Assume now that $\xi$ is a subordinator. For any $x\geq 0$, define the potential measure \begin{equation}\label{eqn:potentialsub} U(x,\ddr y):=\int_0^{\infty}\mathbb{P}_x(\xi(t)\in \ddr y)\ddr t \text{  and } U(\ddr y):=U(0,\ddr y).
\end{equation}
One has, see e.g. \cite[Chapter III, page 74]{Bertoin96book}, for all $s>0$,
\begin{equation}\label{defn:Usub}
\hat{U}(s):=\int_{0}^{\infty}e^{-sy}U(\ddr y)=\mathbb{E}\left[\int_0^{\infty}e^{-s\xi(t)}\ddr t\right]=\frac{1}{-\psi(s)}.
\end{equation}
Moreover, it is easily checked that
\[\hat{U}(x,s):=\int_x^{\infty}e^{-sy}U(x,\ddr y)=e^{-sx}\int_0^\infty e^{-sz}U(\ddr z) \underset{s\rightarrow 0+}{\sim}\hat{U}(s).
\]
We stress that contrary to the previous setting, the measure $U$ may not have a density. However, in both cases the measure on $(0,\infty)$, $\mathbf{1}(y>0)U(\ddr y)$ has a Laplace transform satisfying \eqref{eqn:hk}.
\subsection{Regularly varying functions and Laplace exponents}\label{RVfunction}

A function $f$ is \textit{regularly varying} at $\infty$ with index $\gamma\in\mathbb{R}$, written   $f \in \mathcal{R}_{\gamma}$ at $\infty$,
if it is a positive function defined on a neighbourhood $[A,\infty)$ of $\infty$ satisfying
\[
f(\lambda x)\big/f(x)\to \lambda^{\gamma}\quad\text{as $x\to\infty$ for all $\lambda>0$}.
\]
In particular, $f$ is called \textit{slowly varying} at $\infty$ if $\gamma=0$.
Similarly, a function $f$ is regularly varying at $0$ with index $\gamma$ if and only if $x\mapsto f(1/x)\in\mathcal{R}_{-\gamma}$ at $\infty$.
 Note that $f\in\mathcal{R}_{\gamma}$ if and only if $x\mapsto x^{-\gamma}f(x)\in\mathcal{R}_{0}$.
	
Many properties of regularly varying functions will be used in the proofs of the main results. Specifically, we shall appeal to the uniform convergence theorem (UCT for short), see Bingham et al. \cite[Theorem 1.5.1]{Bingham1987regular},
the Karamata and Tauberian theorems, see \cite[Theorem 1.5.11 and Theorem 1.7.1]{Bingham1987regular} and the monotone density theorem, \cite[Theorem 1.7.2]{Bingham1987regular}.
We recall the Karamata theorem and the Tauberian theorem for convenience of the reader.

\begin{prop}[Karamata's Theorem]\label{prop:kara}
Let $f\in\mathcal{R}_{\rho}$ at $\infty$ for $\rho\in \mathbb{R}$ be locally bounded on $[A,\infty)$.
\begin{enumerate}
\item For any $\sigma\ge-(1+\rho)$, we have
\[
x^{\sigma+1}f(x)\bigg/\int_{A}^{x}t^{\sigma}f(t) \ddr t\underset{x\rightarrow \infty}{\to} 1+\rho+\sigma;
\]
\item For any $\sigma<-(1+\rho)$ (and for $\sigma=-(\rho+1)$ if $\int^{\infty}t^{-\sigma}f(t) \ddr t<\infty$), we have
\[
x^{\sigma+1}f(x)\bigg/\int_{x}^{\infty}t^{\sigma}f(t) \ddr t\underset{x\rightarrow \infty}{\to} -\big(1+\rho+\sigma\big).
\]
\end{enumerate}
\end{prop}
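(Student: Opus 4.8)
\medskip
\noindent\textbf{Proof proposal.} The plan is to derive both items from one statement about a single regularly varying function. Put $g(t):=t^{\sigma}f(t)$, so that $g\in\mathcal{R}_{\mu}$ at $\infty$ with $\mu:=\rho+\sigma$, $g$ is positive and locally bounded on $[A,\infty)$, and the two quotients to be studied become $x\,g(x)\big/\int_{A}^{x}g$ (item~(1), the constraint $\sigma\ge-(1+\rho)$ being $\mu\ge-1$, target limit $1+\mu$) and $x\,g(x)\big/\int_{x}^{\infty}g$ (item~(2), the regime $\mu<-1$, or $\mu=-1$ with $\int^{\infty}g<\infty$, target limit $-(1+\mu)$); note $\int_{x}^{\infty}g<\infty$ automatically when $\mu<-1$ since $g(t)=o(t^{\mu+\varepsilon})$. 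The only ingredients are the scaling substitution $t=xs$, the defining limit $g(xs)/g(x)\to s^{\mu}$, Potter's bounds for $\mathcal{R}_{\mu}$, and dominated convergence.

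\medskip
\noindent\emph{Non-critical case $\mu\neq-1$.} For $\mu>-1$ fix $\delta\in(0,\mu+1)$; Potter's bounds furnish $X_{0}\ge A$ with $g(xs)/g(x)\le 2s^{\mu-\delta}$ for $x\ge X_{0}$, $xs\ge X_{0}$, $0<s\le1$. Splitting $\int_{A}^{x}g=\int_{A}^{X_{0}}g+\int_{X_{0}}^{x}g$, the first term is a fixed constant, negligible against $x\,g(x)\in\mathcal{R}_{1+\mu}$ which tends to $\infty$; for the second, $t=xs$ gives $\int_{X_{0}}^{x}g=x\,g(x)\int_{X_{0}/x}^{1}\frac{g(xs)}{g(x)}\,\ddr s$, and since the integrand (extended by $0$ on $(0,X_{0}/x]$) converges pointwise to $s^{\mu}$ and is dominated on $(0,1]$ by $2s^{\mu-\delta}\in L^{1}(0,1)$, dominated convergence gives the limit $\int_{0}^{1}s^{\mu}\,\ddr s=\frac{1}{\mu+1}$. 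Thus $\int_{A}^{x}g\sim\frac{1}{\mu+1}\,x\,g(x)$ and the quotient in~(1) tends to $1+\mu$. The case $\mu<-1$ is symmetric, now with $\delta\in(0,-\mu-1)$, Potter's bound $g(xs)/g(x)\le 2s^{\mu+\delta}$ for $s\ge1$, and $t=xs$ giving $\int_{x}^{\infty}g=x\,g(x)\int_{1}^{\infty}\frac{g(xs)}{g(x)}\,\ddr s\to x\,g(x)\int_{1}^{\infty}s^{\mu}\,\ddr s=\frac{-1}{\mu+1}x\,g(x)$; hence the quotient in~(2) tends to $-(1+\mu)$.

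\medskip
\noindent\emph{Critical case $\mu=-1$.} Here $L(x):=x\,g(x)$ is slowly varying and $g(t)=L(t)/t$, and the target limit is $1+\mu=0$, so one must prove $L(x)=o(G(x))$ with $G(x):=\int_{A}^{x}g\to\infty$ in case~(1), and $L(x)=o(H(x))$ with $H(x):=\int_{x}^{\infty}g$ in case~(2). The first step is to show $G$ (resp.\ $H$) is slowly varying: after $t=\lambda u$ and using $(1-\varepsilon)L(u)\le L(\lambda u)\le(1+\varepsilon)L(u)$ for large $u$, one squeezes $G(\lambda x)/G(x)$ (resp.\ $H(\lambda x)/H(x)$) into $[1-\varepsilon,1+\varepsilon]$ for $x$ large, the leftover compact $u$-range being negligible against $G(x)\to\infty$ (and absent for $H$, whose integral already starts at $\lambda x$). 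Taking $\lambda=2$, the uniform convergence theorem yields $\int_{x}^{2x}g=\int_{1}^{2}\frac{L(xs)}{s}\,\ddr s\sim L(x)\log2$, whereas slow variation gives $G(2x)-G(x)=o(G(x))$ and $H(x)-H(2x)=o(H(x))$; equating the two expressions for $\int_{x}^{2x}g$ yields $L(x)\log2=o(G(x))$, resp.\ $o(H(x))$, as desired.

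\medskip
\noindent The two delicate points are: (i) the rescaled integrand must be controlled near its ``bad'' endpoint — $s\downarrow0$ in~(1), $s\uparrow\infty$ in~(2) — where the convergence defining regular variation is not uniform; this is exactly what Potter's bounds are for, and $\delta$ must be chosen strictly inside $(0,|\mu+1|)$ to keep the dominating power integrable. (ii) The critical index $\mu=-1$, where $x\,g(x)$ is comparable to no power of $x$, so the substitution argument fails and one has to bootstrap through the slow variation of the truncated integral; this is the step that departs most from the routine computation.
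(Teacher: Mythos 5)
The paper itself offers no proof of this proposition: it is quoted, for the reader's convenience, from Bingham--Goldie--Teugels (Theorem 1.5.11), so there is no in-paper argument to compare against. Your proof is the standard textbook route — reduce to the single function $g(t)=t^{\sigma}f(t)\in\mathcal{R}_{\rho+\sigma}$, rescale by $t=xs$, combine the pointwise limit $g(xs)/g(x)\to s^{\mu}$ with Potter's bounds and dominated convergence away from the critical index, and at $\mu=-1$ bootstrap through slow variation of the truncated integral — and it is essentially correct; local boundedness is used exactly where it is needed (finiteness of $\int_{A}^{X_{0}}g$), and the constraint $\delta\in(0,|\mu+1|)$ is the right one to keep the dominating power integrable.

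One assertion needs a patch. In the critical case of item (1) you declare $G(x)=\int_{A}^{x}g\to\infty$ and use this to discard the leftover compact $u$-range when proving that $G$ is slowly varying. But for $\mu=-1$ the integral may converge (take $f(t)=t^{\rho}/\log^{2}t$ and $\sigma=-(1+\rho)$), so that step as written does not apply, even though the proposition still asserts the limit $0$ in that situation. The fix is immediate: if $G$ is bounded, it increases to a finite positive limit and is therefore trivially slowly varying — or, more directly, $L(x)\log 2\sim\int_{x}^{2x}g\to 0$ while $G(x)$ stays bounded away from $0$, which gives $L(x)=o(G(x))$ at once; if instead $G\to\infty$, your squeeze applies verbatim. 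With that case distinction added, the argument is complete. (Minor remark: the parenthetical hypothesis in item (2) should read $\int^{\infty}t^{\sigma}f(t)\,\ddr t<\infty$; the exponent $-\sigma$ in the paper's statement is a typo, and your reading $\int^{\infty}g<\infty$ is the correct one.)
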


\begin{prop}[Karamata's Tauberian Theorem]\label{prop:ktt}
Let $\mu$ be a positive measure on $\mathbb{R}^{+}$ and $l\in\mathcal{R}_{0}$. Put  $\mu(x):=\mu(0,x]$ for $x>0$. Then for $c,\rho\ge0$,
\[
\mu(x)\sim \frac{c x^{\rho}l(x)}{\Gamma(1+\rho)}
\quad\text{if and only if}\quad
\hat{\mu}(s):=\int_{0}^{\infty}e^{-sy}\mu(dy)\sim c s^{-\rho}l(1/s)
\]
as  $x\to\infty$  and  $s\to0+$ respectively.
For $c=0$ the left hand side is interpreted as $\mu(x)=o(x^{\rho}l(x))$ and the right hand side is interpreted similarly.
\end{prop}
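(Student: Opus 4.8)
\section*{Proof proposal for Proposition~\ref{prop:ktt}}

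This is a classical result and one could simply invoke \cite[Theorem~1.7.1]{Bingham1987regular}, but the plan I would follow is to prove the two implications separately. The Abelian direction (measure asymptotics $\Rightarrow$ Laplace-transform asymptotics) is elementary and rests only on the uniform convergence theorem and Potter's bounds for regularly varying functions, already recalled in Section~\ref{RVfunction}. The Tauberian direction (the converse) is the substantive one, and I would deduce it from the extended continuity theorem for Laplace transforms applied to a one-parameter family of rescalings of $\mu$.

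For the Abelian direction, assume $\mu(x)\sim cx^{\rho}l(x)/\Gamma(1+\rho)$. First I would rewrite $\hat\mu(s)=\int_0^\infty e^{-sy}\mu(\ddr y)=s\int_0^\infty e^{-sx}\mu(x)\,\ddr x$ by integration by parts (legitimate since $\hat\mu$ is finite near $0$, forcing $\mu(x)=o(e^{sx})$), and then substitute $x=t/s$ to obtain $\hat\mu(s)=\int_0^\infty e^{-t}\mu(t/s)\,\ddr t$. By the uniform convergence theorem, $\mu(t/s)\big/\bigl(s^{-\rho}l(1/s)\bigr)\to ct^{\rho}/\Gamma(1+\rho)$ uniformly for $t$ in compact subsets of $(0,\infty)$; a Potter bound furnishes a dominating function of the form $C\bigl(t^{\rho+\varepsilon}+t^{\rho-\varepsilon}\bigr)$, which is integrable against $e^{-t}$, so dominated convergence yields $\hat\mu(s)\sim\frac{cs^{-\rho}l(1/s)}{\Gamma(1+\rho)}\int_0^\infty e^{-t}t^{\rho}\,\ddr t=cs^{-\rho}l(1/s)$. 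The case $c=0$ follows from the same estimates with the upper bound read as an $o(\cdot)$ term.

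For the Tauberian direction, assume $\hat\mu(s)\sim cs^{-\rho}l(1/s)$ as $s\to0+$. I would introduce, for each $t>0$, the scaled positive measure $\mu_t$ on $[0,\infty)$ defined by $\mu_t(B):=\mu(tB)\big/\bigl(t^{\rho}l(t)\bigr)$ for Borel $B\subseteq[0,\infty)$, whose Laplace transform is $\int_0^\infty e^{-\lambda x}\mu_t(\ddr x)=\hat\mu(\lambda/t)\big/\bigl(t^{\rho}l(t)\bigr)$. Using $\hat\mu(\lambda/t)\sim c(t/\lambda)^{\rho}l(t/\lambda)$ together with $l(t/\lambda)/l(t)\to1$, this converges, for each fixed $\lambda>0$, to $c\lambda^{-\rho}$ as $t\to\infty$. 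Now $\lambda\mapsto c\lambda^{-\rho}$ is exactly the Laplace transform of the locally finite measure $\mu_\infty(\ddr x)=\frac{c}{\Gamma(\rho)}x^{\rho-1}\,\ddr x$ when $\rho>0$, and of $\mu_\infty=c\delta_0$ when $\rho=0$. The extended continuity theorem for Laplace transforms then upgrades this pointwise convergence to $\mu_t([0,x])\to\mu_\infty([0,x])$ at every continuity point $x$ of $x\mapsto\mu_\infty([0,x])$; taking $x=1$, a continuity point in both cases, gives $\mu(t)\big/\bigl(t^{\rho}l(t)\bigr)=\mu_t((0,1])\to\mu_\infty([0,1])=c/\Gamma(1+\rho)$, i.e.\ $\mu(x)\sim cx^{\rho}l(x)/\Gamma(1+\rho)$. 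For $c=0$ one gets $\mu_\infty=0$ and the same argument yields $\mu(x)=o(x^{\rho}l(x))$.

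The hard part is the Tauberian half, and its single crucial input is the extended continuity theorem: pointwise convergence of the Laplace transforms of the $\mu_t$ does not by itself prevent mass from escaping to $0$ or to $\infty$, and it is precisely the fact that the limiting function $c\lambda^{-\rho}$ is itself the Laplace transform of a genuine locally finite measure that rules this out and delivers convergence of the distribution functions. A minor technical point to dispose of is that $\mu_t((0,1])$ and $\mu_t([0,1])$ differ by $\mu(\{0\})\big/\bigl(t^{\rho}l(t)\bigr)$, which tends to $0$ (or one simply takes $\mu$ with no atom at the origin, as is customary), so the possible atom of $\mu_\infty$ at $0$ when $\rho=0$ causes no difficulty. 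Everything else --- the integration by parts, the change of variables, and the regular-variation manipulations --- is routine once the tools of Section~\ref{RVfunction} are in hand.
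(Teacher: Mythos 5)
Your argument is sound, but note that the paper does not prove this proposition at all: it is quoted verbatim as a classical result, with a pointer to Bingham, Goldie and Teugels \cite[Theorem 1.7.1]{Bingham1987regular}, and is then used as a black box (e.g.\ in Lemma \ref{lem:k}, \eqref{eqn:k:1}, \eqref{eqn:spd:2-3} and the Appendix). What you have written out is essentially the canonical Feller-style proof of that textbook theorem: the Abelian half by integration by parts, the substitution $x=t/s$, the uniform convergence theorem and a Potter bound to justify dominated convergence; the Tauberian half by rescaling $\mu_t(B)=\mu(tB)/(t^{\rho}l(t))$, identifying the pointwise limit $c\lambda^{-\rho}$ of the transforms as the Laplace transform of $\frac{c}{\Gamma(\rho)}x^{\rho-1}\ddr x$ (or $c\delta_0$ when $\rho=0$), and invoking the extended continuity theorem. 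Both halves are correct, and you rightly isolate the continuity theorem as the step that prevents mass from escaping. One small caveat: your claim that $\mu(\{0\})/(t^{\rho}l(t))\to0$ is not automatic when $\rho=0$ and $l$ is bounded, and in that degenerate case the statement as written with $\mu(x)=\mu(0,x]$ genuinely requires the no-atom-at-the-origin convention you mention (the cited reference works with $\mu[0,x]$); this is immaterial for the paper's applications, where $x^{\rho}l(x)=-1/\psi(1/x)\to\infty$. So the difference between you and the paper is simply proof versus citation; your proof buys self-containedness at the cost of reproducing a standard reference, which is a legitimate but optional trade-off here.
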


The following lemmas are consequences of the propositions above.
The first one guarantees that if $f\in\mathcal{R}_{-\gamma}$ with negative index, then it is equivalent to a decreasing and differentiable function.

\begin{lem}\label{lem:f-}
If $f\in\mathcal{R}_{-\gamma}$ at $\infty$ for some $\gamma>0$, we have
\begin{equation}
\label{eqn:f}
f(x)\sim f^{\downarrow}(x):=\gamma\int_{x}^{\infty}\frac{f(y)}{y}\,\ddr y
\quad\text{as $x\to\infty$}.
\end{equation}
Notice that the function $f^{\downarrow}$ is decreasing.
\end{lem}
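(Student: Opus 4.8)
The statement to prove is Lemma \ref{lem:f-}: if $f\in\mathcal{R}_{-\gamma}$ at $\infty$ with $\gamma>0$, then $f(x)\sim f^{\downarrow}(x):=\gamma\int_x^\infty \frac{f(y)}{y}\,\ddr y$ as $x\to\infty$, and $f^{\downarrow}$ is decreasing. The plan is to read this off directly from Karamata's Theorem (Proposition \ref{prop:kara}). First I would check that the integral $\int^\infty \frac{f(y)}{y}\,\ddr y$ converges, so that $f^{\downarrow}$ is well-defined: since $f\in\mathcal{R}_{-\gamma}$, for $x$ large enough $f(x)\le x^{-\gamma/2}$ (using the standard Potter-type bound, a consequence of the UCT cited in the text), and $\int^\infty y^{-\gamma/2-1}\,\ddr y<\infty$; also $f$ is positive so the tail integral is a positive, finite, decreasing function of $x$, giving the monotonicity claim for free.

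The core step is to apply part (2) of Karamata's Theorem with $\rho=-\gamma$ and $\sigma=-1$. One checks the hypothesis: $\sigma=-1<-(1+\rho)=\gamma-1$ holds precisely because $\gamma>0$; and the side condition $\int^\infty t^{-\sigma}f(t)\,\ddr t=\int^\infty \frac{f(t)}{t}\cdot t^{?}$ — more precisely with $\sigma=-1$ we need $\int^\infty t^{1}f(t)\,\ddr t$? No: rereading the statement, the integral appearing in Karamata (2) is $\int_x^\infty t^\sigma f(t)\,\ddr t$, so with $\sigma=-1$ this is exactly $\int_x^\infty \frac{f(t)}{t}\,\ddr t$, which we have just shown converges. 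Proposition \ref{prop:kara}(2) then gives
\[
\frac{x^{\sigma+1}f(x)}{\int_x^\infty t^\sigma f(t)\,\ddr t}=\frac{x^{0}f(x)}{\int_x^\infty \frac{f(t)}{t}\,\ddr t}\xrightarrow[x\to\infty]{}-(1+\rho+\sigma)=-(1-\gamma-1)=\gamma,
\]
which is precisely $f(x)\big/\big(\frac{1}{\gamma}f^{\downarrow}(x)\big)\to 1$, i.e. $f(x)\sim f^{\downarrow}(x)$.

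I do not anticipate a serious obstacle here; this is essentially a direct citation of Karamata's Theorem once the bookkeeping of indices is done correctly. The only point requiring a line of care is verifying the convergence of $\int^\infty \frac{f(y)}{y}\,\ddr y$ (equivalently, that the side condition in Proposition \ref{prop:kara}(2) is met), which follows from $\gamma>0$ together with a Potter bound, and noting that local boundedness of $f$ on $[A,\infty)$ — needed to apply Proposition \ref{prop:kara} — is automatic since regularly varying functions are locally bounded away from the origin on their domain of definition. Finally, the decreasing property of $f^{\downarrow}$ is immediate because it is the tail of an integral of the positive integrand $f(y)/y$.
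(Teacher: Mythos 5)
Your proof is correct and is exactly the argument the paper intends: the lemma is stated there as a direct consequence of Karamata's Theorem, and your application of Proposition \ref{prop:kara}(2) with $\rho=-\gamma$, $\sigma=-1$ (noting $-1<\gamma-1$ so the strict-inequality case applies, with convergence of $\int^\infty f(y)y^{-1}\,\ddr y$ and monotonicity of $f^{\downarrow}$ handled as you do) reproduces \eqref{eqn:f} with the right constant $\gamma$.
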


The next lemma allows us, among other things, to construct a regularly varying Laplace exponent $\psi$  from the L\'evy measure $\nu$ in \eqref{defn:psi}. Recall that $\bar{\nu}$ denotes the tail of $\nu$.

\begin{lem}
Suppose that $-\psi'(0+)=\infty$. For any $\alpha \in [0,1]$,
\begin{equation}\label{eqn:nu}
-\psi(s)\sim s^{\alpha}l(1/s)\text{\ at $0+$\quad if and only if\quad}
\int_{1}^{x}\bar{\nu}(z) \ddr z\sim \frac{x^{1-\alpha}l(x)}{\Gamma(2-\alpha)}\in\mathcal{R}_{1-\alpha}
\text{\ at $\infty$},
\end{equation}
where $l\in\mathcal{R}_{0}$ at $\infty$.

For $\alpha\in[0,1)$, \eqref{eqn:nu} is equivalent to $\D \bar{\nu}(x)\sim \frac{x^{-\alpha}l(x)}{\Gamma(1-\alpha)}\in\mathcal{R}_{-\alpha}$ at $\infty$.\end{lem}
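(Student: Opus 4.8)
The plan is to prove the equivalences by combining the Karamata Tauberian theorem (Proposition~\ref{prop:ktt}) applied to the Laplace exponent $\psi$ with the Karamata theorem (Proposition~\ref{prop:kara}) and the monotone density theorem. The starting observation is that, by the L\'evy--Khintchine representation \eqref{defn:psi} and integration by parts, one can rewrite $-\psi(s)$ (up to a linear term, which is negligible at $0$ because $-\psi'(0+)=\infty$) in terms of the double-integrated tail $I(x):=\int_1^x\bar\nu(z)\,\ddr z$. Concretely, using $\int_0^\infty(e^{-sz}-1+sz\mathbf 1_{z\le1})\nu(\ddr z)$ and Fubini, one gets an expression of the form $-\psi(s)\sim s^2\int_0^\infty e^{-sx}I(x)\,\ddr x$ as $s\to0+$, after absorbing the bounded/linear contributions. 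This identity is the bridge: it turns a statement about $-\psi$ near $0$ into a statement about the Laplace transform of the monotone function $I$.

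First I would make the reduction to $I$ rigorous: show that $-\psi(s)+\gamma s = \int_0^\infty(e^{-sz}-1+sz\mathbf 1_{z\le1})\nu(\ddr z)$ and, via two integrations by parts (or Fubini applied twice), equate this with $s^2\int_0^\infty e^{-sx}I(x)\,\ddr x$ plus an error that is $O(s)$; since $-\psi(s)\in\mathcal R_\alpha$ with $\alpha\le1$ and $-\psi(s)/s\to\infty$, the $O(s)$ term is $o(-\psi(s))$, so $-\psi(s)\sim s^2\widehat I(s)$ where $\widehat I(s)=\int_0^\infty e^{-sx}I(x)\,\ddr x=\int_0^\infty e^{-sx}I(\ddr x)\cdot s^{-1}$ after one more integration by parts, i.e. $\widehat I(s)= s^{-1}\int_0^\infty e^{-sx}\bar\nu(x)\,\ddr x$ adjusted by constants. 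It is cleanest to phrase Karamata--Tauberian directly for the measure $\mu(\ddr x)=\bar\nu(x)\,\ddr x$ with $\mu(0,x]=I(x)$. Then $-\psi(s)\sim s^\alpha l(1/s)$ is equivalent, dividing by $s$, to $\widehat\mu(s):=\int_0^\infty e^{-sx}\mu(\ddr x)=\int_0^\infty e^{-sx}\bar\nu(x)\ddr x \sim s^{-(1-\alpha)} l(1/s)$ up to a multiplicative constant; Proposition~\ref{prop:ktt} with $\rho=1-\alpha$ and $c$ chosen so that $c/\Gamma(1+\rho)=c/\Gamma(2-\alpha)$ matches, gives precisely $I(x)=\mu(0,x]\sim \frac{x^{1-\alpha}l(x)}{\Gamma(2-\alpha)}$, which is regularly varying of index $1-\alpha$. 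This is the first displayed equivalence.

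Next, for $\alpha\in[0,1)$, I would upgrade the statement on $I(x)=\int_1^x\bar\nu(z)\,\ddr z$ to one on $\bar\nu$ itself. Since $\bar\nu$ is nonincreasing and $I$ is its integral with $I\in\mathcal R_{1-\alpha}$ and $1-\alpha>0$, the monotone density theorem (\cite[Theorem 1.7.2]{Bingham1987regular}) applies and yields $\bar\nu(x)\sim I'(x)\cdot$(no, rather) $\bar\nu(x)\sim (1-\alpha)\,I(x)/x \sim \frac{(1-\alpha)x^{-\alpha}l(x)}{\Gamma(2-\alpha)}=\frac{x^{-\alpha}l(x)}{\Gamma(1-\alpha)}$, using $\Gamma(2-\alpha)=(1-\alpha)\Gamma(1-\alpha)$. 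Conversely, if $\bar\nu(x)\sim\frac{x^{-\alpha}l(x)}{\Gamma(1-\alpha)}\in\mathcal R_{-\alpha}$ with $\alpha<1$, Karamata's theorem (Proposition~\ref{prop:kara}, part~1, with $\rho=-\alpha$, $\sigma=0$, noting $\sigma=0\ge-(1+\rho)=\alpha-1$ since $\alpha<1$) gives $\int_1^x\bar\nu(z)\,\ddr z\sim \frac{x\bar\nu(x)}{1-\alpha}\sim\frac{x^{1-\alpha}l(x)}{\Gamma(2-\alpha)}$, closing the loop back to \eqref{eqn:nu}. The case $\alpha=1$ is excluded here precisely because the monotone density theorem fails for index $0$ and $\bar\nu$ need not be regularly varying.

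The main obstacle I anticipate is the bookkeeping in the first reduction: carefully justifying the interchange of integration (Tonelli is fine since the integrand after sign adjustments is nonnegative, but the $-1+sz\mathbf1_{z\le1}$ compensator makes signs delicate near $z=1$), and controlling the truncation/linear-drift terms so that one genuinely obtains $-\psi(s)\sim s^2\widehat I(s)$ rather than merely $-\psi(s)=s^2\widehat I(s)+O(s)$ with an uncontrolled constant — this is where the hypothesis $-\psi'(0+)=\infty$ is essential, since it forces $s=o(-\psi(s))$ and $l(1/s)\to\infty$ (for $\alpha=1$) or the power $s^\alpha$ dominates $s$ (for $\alpha<1$). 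Once that asymptotic identity is in hand, everything else is a direct invocation of Propositions~\ref{prop:kara} and \ref{prop:ktt} and the monotone density theorem, with only elementary Gamma-function identities to reconcile the constants.
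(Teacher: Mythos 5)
Your argument is correct and follows essentially the same route as the paper: use $-\psi'(0+)=\infty$ to reduce $-\psi(s)$ to the large-jump part $\int_{1}^{\infty}(1-e^{-sz})\nu(\ddr z)= s\int_{0}^{\infty}e^{-sz}\bar{\nu}(1\vee z)\,\ddr z$ up to $o(-\psi(s))$ terms, apply Karamata--Tauberian to the integrated tail, then the monotone density theorem and Karamata's theorem for the $\alpha\in[0,1)$ refinement. One small correction: carry out the Tauberian step for the measure $\bar{\nu}(1\vee z)\,\ddr z$ (or $\mathbf{1}(z>1)\bar{\nu}(z)\,\ddr z$) rather than $\bar{\nu}(z)\,\ddr z$, since $\int_{0}^{1}\bar{\nu}(z)\,\ddr z$ can be infinite (infinite-variation case), so your $\mu(0,x]$ need not equal $I(x)$ nor even be finite; the truncation only changes $I$ by a bounded amount, which is negligible because $I(x)\to\infty$ under $-\psi'(0+)=\infty$.
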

\begin{proof}
It follows  from \eqref{defn:psi} that when $\E[\xi(1)]=-\psi'(0)=\infty$,
\[
-\psi(s)\sim \int_{1}^{\infty}(1-e^{-sy})\nu(\ddr y)= s\int_{0}^{\infty}e^{-sz}\bar{\nu}(1\vee z) \ddr z, \text{ as } s\to0+.
\]
 Applying \ktt\ we have \eqref{eqn:nu}.
If $\alpha\in[0,1)$, we further have from \eqref{eqn:nu} that
\begin{center}
$\D \int_{1}^{x}\bar{\nu}(z) \ddr z\sim \frac{x^{1-\alpha}l(x)}{\Gamma(2-\alpha)}$
at $\infty$ \quad if and only if\quad
$\D \bar{\nu}(x)\sim \frac{x^{-\alpha}l(x)}{\Gamma(1-\alpha)}\in\mathcal{R}_{-\alpha}$ at $\infty$,
\end{center}
applying the monotone density theorem.
 \end{proof}
Note that if $-\psi \in \mathcal{R}_\alpha$ then $\psi$ is negative in a neighbourhood of $0$ and necessarily $p\in (0,\infty]$, so that the L\'evy process $\xi$, with exponent $\psi$,  drifts towards $+\infty$ a.s.. Another important application of the Tauberian theorem is the regularly varying property for the integral $\int_0^{x}U(\ddr y)$ with $U$ the potential measure of $\xi$; see \eqref{defn:Unonsub} and \eqref{defn:Usub}.

\begin{lem}[Regular variation of the potential measure]
Suppose $-\psi \in \mathcal{R}_\alpha$ at $0$. Then in both cases $0<p<\infty$ and $p=\infty$, we have
\begin{equation}
\label{eqn:k:1}
\int_0^{x}U(\ddr y)\sim \frac{-1}{\psi(1/x)}\frac{1}{\Gamma(1+\alpha)}\in\mathcal{R}_{\alpha}\text{\ at $\infty$}.
\end{equation}
\end{lem}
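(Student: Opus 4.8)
The plan is to establish \eqref{eqn:k:1} as a direct application of Karamata's Tauberian theorem (Proposition \ref{prop:ktt}), using the Laplace-transform information already collected in the preliminaries. First I would recall that in both the non-subordinator case ($0<p<\infty$) and the subordinator case ($p=\infty$), the potential measure restricted to $(0,\infty)$ has a Laplace transform that is asymptotically $-1/\psi(s)$ as $s\to 0+$. Indeed, in the non-subordinator case this is exactly Lemma \ref{lem:k}, where $\hat\kappa(s)=\int_0^\infty e^{-sy}U(\ddr y)\sim -1/\psi(s)$; in the subordinator case $\hat U(s)=1/(-\psi(s))$ exactly by \eqref{defn:Usub}. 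So in either case, setting $\mu:=\mathbf{1}(y>0)U(\ddr y)$, we have $\hat\mu(s)\sim -1/\psi(s)$ at $0+$.

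Next I would invoke the hypothesis $-\psi\in\mathcal{R}_\alpha$ at $0$. Writing $-\psi(s)=s^\alpha l(1/s)$ for some $l\in\mathcal{R}_0$ at $\infty$ (possible by the definition of regular variation at $0$), we get $\hat\mu(s)\sim s^{-\alpha}\tilde l(1/s)$ as $s\to 0+$, where $\tilde l(x):=1/l(x)\in\mathcal{R}_0$ at $\infty$. Now Proposition \ref{prop:ktt}, applied with $c=1$, $\rho=\alpha\ge 0$, and slowly varying part $\tilde l$, yields
\[
\mu(0,x]=\int_0^x U(\ddr y)\sim \frac{x^\alpha \tilde l(x)}{\Gamma(1+\alpha)}=\frac{x^\alpha}{\Gamma(1+\alpha)\,l(x)}\quad\text{as }x\to\infty.
\]
Finally, since $-\psi(1/x)=x^{-\alpha}l(x)$, i.e. $x^\alpha/l(x)=-1/\psi(1/x)$, this is precisely $\int_0^x U(\ddr y)\sim \frac{-1}{\psi(1/x)}\frac{1}{\Gamma(1+\alpha)}$, and the right-hand side belongs to $\mathcal{R}_\alpha$ at $\infty$ because $x\mapsto-1/\psi(1/x)\in\mathcal{R}_\alpha$ at $\infty$ by regular variation at $0$ of $-\psi$.

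I do not anticipate a genuine obstacle here: the statement is essentially a packaging of the Tauberian theorem. The only point requiring a small amount of care is the reduction to a single Laplace-transform asymptotic that covers both the subordinator and the non-subordinator case uniformly — that is, justifying the remark that $\mathbf{1}(y>0)U(\ddr y)$ always has Laplace transform asymptotic to $-1/\psi(s)$. In the non-subordinator case the potential measure has density $\kappa$ and there is no atom at $0$, so $\mu(0,x]$ and $\int_0^x\kappa(y)\,\ddr y$ agree and Lemma \ref{lem:k} applies verbatim; in the subordinator case $U(\ddr y)$ may have an atom at $0$ (coming from the drift), but this atom contributes a bounded amount to $\mu(0,x]$, hence is negligible against $x^\alpha/l(x)\to\infty$, and anyway one works with the restriction to $(0,\infty)$ from the outset. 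Once this uniform Laplace asymptotic is in hand, the conclusion is immediate from Proposition \ref{prop:ktt}.
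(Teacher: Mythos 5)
Your argument is correct and is essentially the paper's own proof: in the subordinator case $\hat U(s)=1/(-\psi(s))$ exactly, in the non-subordinator case Lemma \ref{lem:k} gives $\hat\kappa(s)\sim -1/\psi(s)$ at $0+$, and then Karamata's Tauberian theorem (Proposition \ref{prop:ktt}) yields \eqref{eqn:k:1}. Your extra remark about the possible atom of $U$ at $0$ being negligible is a harmless refinement, not a departure from the paper's route.
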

\begin{proof}
The result in the subordinator case is a direct consequence of \ktt. In the non-subordinator case, one has $\int_0^{x}U(\ddr y)=\int_{0}^{x}\kappa(y) \ddr y$ and by \eqref{eqn:hk}, $
\hat{\kappa}(s)\sim \frac{-1}{\psi(s)}\in\mathcal{R}_{-\alpha}\text{\ at $0$}$. Thus, \eqref{eqn:k:1} follows from \ktt.
\end{proof}

\subsection{On the probability laws of $\chi_\alpha$ and $\varrho_{\alpha,\beta}$}
To study the explosive behaviour of $X$, it is necessary to understand the asymptotic distribution of the overshoots for the parent process $\xi$. In the case of infinite mean, we need $-\psi$ to be regularly varying to take use of the scaling property. The following result is known for a subordinator, we refer to Bertoin \cite[Exercice III.4]{Bertoin96book};
see also Doney and Maller \cite{Doney2002} for a more detailed discussion on the convergence of $\xi(\tau_{x}^{+})/x\rightarrow 1$  either in probability or almost surely.

\begin{prop}\label{prop:overshoot}
Let $\alpha \in [0,1]$, suppose that $p>0$ and $-\psi\in\mathcal{R}_{\alpha}$ at $0$. Then
\begin{equation}\label{convergencechialpha}
x^{-1}\xi(\tau_{x}^{+})\Rightarrow \chi_{\alpha}
\quad\text{under $\P$ as $x\to\infty$},
\end{equation}
where $\chi_{\alpha}$ is a random variable whose distribution is specified in \eqref{defn:chi}. The convergence \eqref{convergencechialpha} also holds under $\P_{1}(\cdot|\tau_{a}^{-}=\infty)$ for every $a\in(0,1)$.
\end{prop}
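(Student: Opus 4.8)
The plan is to reduce the overshoot convergence to a statement about a stable subordinator via a Tauberian/scaling argument. First I would treat the subordinator case as the known base case (Bertoin, Exercise III.4): if $-\psi\in\mathcal{R}_\alpha$ at $0$, then by the Tauberian theorem the tail of the L\'evy measure behaves like $\bar\nu(x)\sim x^{-\alpha}l(x)/\Gamma(1-\alpha)$ for $\alpha\in[0,1)$ (and the drift dominates when $\alpha=1$), and the overshoot $\xi(\tau_x^+)-x$ over a high level is governed, through the quintuple law, by the size-biased large jump straddling level $x$. Rescaling by $x$ and using regular variation of $\bar\nu$, one obtains that $(\xi(\tau_x^-)/x, (\xi(\tau_x^+)-\xi(\tau_x^-))/x)$ converges jointly to the pair $(U, J)$ governing the last position before the jump and the jump size for a stable-$\alpha$ subordinator; marginalizing the sum gives the law \eqref{defn:chi}, with the boundary conventions $\chi_1=1$, $\chi_0=\infty$ matching the cases where the drift dominates or the jumps are so heavy that $\xi(\tau_x^+)/x\to\infty$. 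I would either cite this directly or sketch the computation of $\mathbb P(\chi_\alpha>z)$ from the joint density $\mathbb{P}(U\in\ddr u, J\in\ddr j)\propto u^{\alpha-1}j^{-\alpha-1}\ddr u\,\ddr j$ on $\{0<u<1, j>1-u\}$, integrating over $\{u+j>z\}$.

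Next I would pass from the subordinator case to the general spectrally positive case with $0<p<\infty$. The key point is that for the overshoot over a high level, only the large positive jumps matter asymptotically, and these are the same for $\xi$ as for its ``jump part''; the Gaussian component and the compensated small jumps contribute $O(\sqrt x)$ and $o(x)$ respectively by the strong law / Kolmogorov bounds, hence are negligible after dividing by $x$. More precisely, I would use the potential-measure computation already set up in the preliminaries: by \eqref{eqn:resl} and \eqref{defn:kappa}, the pre-overshoot position has a density proportional to $\kappa$, and by Lemma \ref{lem:k}, $\hat\kappa(s)\sim -1/\psi(s)\in\mathcal R_{-\alpha}$ at $0$, so by the Tauberian theorem $\int_0^x\kappa(y)\,\ddr y\sim \frac{-1}{\psi(1/x)}\frac{1}{\Gamma(1+\alpha)}\in\mathcal R_\alpha$ (Lemma on regular variation of the potential measure). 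Combining the potential density $\kappa$ with the compensation formula applied to the Poisson point process of jumps $\nu$, the law of $(\xi(\tau_x^-)/x,(\xi(\tau_x^+)-x)/x)$ can be written as an explicit double integral against $\kappa(\ddr u)\,\nu(\ddr j)$; rescaling $u=xv$, $j=xw$ and invoking regular variation of both $\int_0^{\cdot}\kappa$ and $\bar\nu$ (for $\alpha<1$) — i.e. $x\kappa(xv)/\int_0^x\kappa \to \alpha v^{\alpha-1}$ and $\bar\nu(xw)/\bar\nu(x)\to w^{-\alpha}$ — yields convergence to the stable-subordinator pair, whence \eqref{convergencechialpha}. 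For $\alpha=1$ one argues separately that the overshoot is $o(x)$ so $\xi(\tau_x^+)/x\to 1$; for $\alpha=0$, $\bar\nu$ is slowly varying and the rescaled overshoot diverges, giving $\chi_0=\infty$.

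Finally I would handle the conditioning on $\{\tau_a^-=\infty\}$. Here I use the $h$-transform / change of measure that makes $\{\tau_a^-=\infty\}$ an event of full measure: under $\mathbb P_1$, conditioning on never going below $a\in(0,1)$ amounts to conditioning on an event of probability $e^{-p(1-a)}>0$ by \eqref{exitprobability}, and on this event the process still drifts to $+\infty$ with the same large-jump structure, since the conditioning only reweights the paths by their behaviour near the low level $a$ and leaves the asymptotic (large-$x$) dynamics unchanged up to a bounded Radon--Nikodym factor. Concretely, for any $b$ large, $\{\tau_x^+<\tau_a^-\}$ has probability $\to e^{-p(1-a)}$ as first $b$ then everything else is controlled, and the overshoot at time $\tau_x^+$ conditioned on $\{\tau_a^-=\infty\}$ has, by the Markov property at $\tau_x^+$ and the fact that from level $x\gg a$ the process avoids $a$ with probability $e^{-p(x-a)}\to$ (well, this tends to a limit that factors out), the same limiting law. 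I expect the main obstacle to be making this last conditioning argument clean — i.e. showing that the Radon--Nikodym derivative between $\mathbb P_1(\cdot\mid\tau_a^-=\infty)$ and $\mathbb P_1$, restricted to the pre-$\tau_x^+$ sigma-field, converges (and is uniformly bounded) so that the weak limit is unaffected — rather than the scaling computation itself, which is a routine application of the Tauberian and uniform convergence theorems once the potential-measure identities of Section \ref{ssec:2.1} are in hand.
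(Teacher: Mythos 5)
Your strategy is viable but genuinely different from the paper's, so let me first compare. The paper never invokes the compensation formula for $\xi$ itself: it passes to the ladder height process $H$, uses $\xi(\tau_x^+)=H(\sigma_x^+)$ as in \eqref{identitiesovershoot}, observes from \eqref{eqn:ladderexponent} that $\Phi(s)=\psi(s)/(s-p)$ is again regularly varying of index $\alpha$ at $0$, and then computes the double Laplace transform
\[
\int_{0}^{\infty}e^{-qt}\E\big[e^{-\theta x^{-1}\xi(\tau_{xt}^{+})}\big]\,\ddr t
=\frac{1}{q}\Big(1-\frac{\Phi(\theta/x)}{\Phi((\theta+q)/x)}\Big),
\]
whose limit identifies the law \eqref{defn:chi} in one stroke, uniformly in $\alpha\in[0,1]$, with the creeping atom $\tfrac{\sigma^2}{2}u(x)$ automatically accounted for through \eqref{eqn:prop2:4}. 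Your Dynkin--Lamperti style route (killed occupation measure against $\nu$, rescale, use regular variation of $\int_0^{\cdot}\kappa$ and of $\bar\nu$) can be made rigorous for $\alpha\in(0,1)$ and has the merit of exhibiting the limiting undershoot/jump pair explicitly, but it forces separate treatments of $\alpha=0$ and $\alpha=1$, which the ladder-height/Laplace computation handles for free.

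Three points in your write-up need repair. First, the measure you put into the compensation formula is misidentified: \eqref{eqn:resl} is the resolvent of $\xi$ killed upon going \emph{below} $a$, whereas the pre-overshoot position is distributed according to the occupation measure of $\xi$ strictly before $\tau_x^+$, which (started from $0$) has density $e^{py}W_{p}(x)-W(y)$ for $y<x$, not $\kappa(y)=e^{py}\phi'(0)-W(y)$; replacing the former by $\kappa$ after the rescaling $y=xv$ is legitimate but requires an error estimate (the discrepancy is exponentially small in $x-y$ and harmless once integrated against $\bar\nu(x-y+\cdot)$), and this is precisely the step the paper sidesteps by working with $H$, for which the first-passage decomposition \eqref{eqn:prop2:4} is exact. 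Relatedly, your heuristic that the Gaussian part contributes $O(\sqrt{x})$ is not correct as stated (the relevant time $\tau_x^+$ is random and large), and when $\sigma>0$ the process can creep over $x$, an event invisible to a jump-only compensation formula; it is asymptotically negligible for $\alpha<1$ since $\P(\chi_\alpha>1)=1$ and the renewal density of $H$ vanishes at infinity, but this must be said. Second and more importantly, the conditioning step, which you flag as the main obstacle and attack via a Radon--Nikodym/uniform-boundedness argument, needs no change of measure at all: write $\E_{1}[f(\xi(\tau_{x}^{+})/x);\tau_{a}^{-}=\infty]=\E_{1}[f(\xi(\tau_{x}^{+})/x)]-\E_{1}[f(\xi(\tau_{x}^{+})/x);\tau_{a}^{-}<\infty]$, split the subtracted term at $\tau_{a}^{-}$ (no negative jumps gives $\xi(\tau_{a}^{-})=a$, so the strong Markov property restarts the same overshoot problem from $a$, which converges to $\E[f(\chi_\alpha)]\P_1(\tau_a^-<\infty)$), and kill the cross term on $\{\tau_{x}^{+}<\tau_{a}^{-}<\infty\}$ with the bound $\P_{x}(\tau_{a}^{-}<\infty)=e^{p(a-x)}\to0$ from \eqref{exitprobability}; the terms then combine to give the conditional limit. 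As it stands, that part of your proposal is incomplete rather than wrong.
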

The proof of Proposition \ref{prop:overshoot} is provided in Section \ref{sec:proofprop1overshoot}. In the case $\alpha=0$, one observes by combining \cite[Theorem III.6]{Bertoin96book} with Proposition \ref{prop:overshoot} that $\xi(\tau_{x}^{+}-)=o(x)$ and $x=o(\xi(\tau_{x}^{+}))$ in probability when  $x\to\infty$. In this extreme case, when the process upcrosses an arbitrary large level $x$, it overshoots so far above it that $x$ is negligible in comparison to the position of the process after the overshoot.

We shall establish in this direction the following proposition.
\begin{prop}\label{cor:a=0}
If $-\psi\in\mathcal{R}_{0}$ at $0$, we have
\begin{equation}\label{convergenceunif}
\psi(1/\xi(\tau_{x}^{+}))\big/\psi(1/x)\Rightarrow \Lambda,
\quad\text{under $\P$ as $x\to\infty$},
\end{equation}
where $\Lambda$ is a uniform random variable on $(0,1)$.
The convergence \eqref{convergenceunif} also holds under $\mathbb{P}_{1}(\cdot|\tau_{a}^{-}=\infty)$ for every $a\in(0,1)$.
\end{prop}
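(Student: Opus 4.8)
The plan is to reduce the statement about the overshoot $\xi(\tau_x^+)$ to a statement about the ratio of a potential-type variable and its renormalizing constant, and then identify the limit via the Tauberian machinery already assembled. First I would recall the classical overshoot formula for the parent SPLP (or subordinator) $\xi$ drifting to $+\infty$: for $z\ge x$,
\[
\P\big(\xi(\tau_x^+)>z\big)=\P\big(\xi(\tau_x^+)-x>z-x\big)
\]
can be written, via the quintuple law / Vigon-type identity for upward passage, as an integral against the potential measure $U(\ddr y)$ on $[0,x)$ of the tail $\bar\nu$ of the L\'evy (jump) measure:
\[
\P\big(\xi(\tau_x^+)>z\big)=\int_{[0,x)}U(\ddr y)\,\bar\nu\big(z-y\big).
\]
(For the non-subordinator case one uses $U(\ddr y)=\kappa(y)\ddr y$ on $(0,\infty)$ as in \eqref{defn:Unonsub}; the killed-at-$a$ version needed for the conditioned statement uses $U_a$ from \eqref{eqn:resl}.) Since the event of interest is $\{\psi(1/\xi(\tau_x^+))/\psi(1/x)\le t\}=\{\xi(\tau_x^+)\ge y_t\}$ for an appropriate level $y_t$ with $\psi(1/y_t)=t\,\psi(1/x)$, everything comes down to the asymptotics of the double object $\int_{[0,x)}U(\ddr y)\,\bar\nu(z-y)$ when $z/x\to\infty$ in the right way.

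The key analytic input is that $-\psi\in\mathcal R_0$ at $0$ forces, through the lemma following \eqref{eqn:nu}, $\int_1^x\bar\nu(w)\,\ddr w\sim l(x)=-\psi(1/x)^{-1}$-type behavior, i.e. $\alpha=0$ there; combined with $\int_0^x U(\ddr y)\sim -1/\psi(1/x)$ from \eqref{eqn:k:1} (again with $\alpha=0$, so this is a slowly varying quantity), both the potential mass and the integrated tail are slowly varying. I would then split $\int_{[0,x)}U(\ddr y)\,\bar\nu(z-y)$ according to whether $y$ is small (say $y\le \varepsilon x$) or large ($\varepsilon x<y<x$): on the small part $\bar\nu(z-y)\approx\bar\nu(z)$ by the UCT since $z\to\infty$ and $z-y\sim z$, giving a contribution $\sim U([0,\varepsilon x])\bar\nu(z)$; the large part is handled by monotonicity of $\bar\nu$ and the fact that $U((\varepsilon x,x))=o(U([0,x]))$ is \emph{not} true in general, so instead one integrates by parts, writing $\int_{[0,x)}U(\ddr y)\bar\nu(z-y)=-\int_{(0,x]}\bar\nu(z-y)\,\ddr\!\big(\overline{U}\big)$-style, and uses that the convolution of two slowly varying ``densities'' is governed by the one that is heavier — here both scale like $1/(-\psi(1/\cdot))$. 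The upshot should be that
\[
\P\big(\xi(\tau_x^+)>z\big)\sim \frac{\psi(1/z)}{\psi(1/x)}\quad\text{as }x\to\infty,\ z\to\infty,\ \text{with}\ \psi(1/z)/\psi(1/x)\to t\in(0,1),
\]
which is exactly $\P(\Lambda\ge t)=1-t$ after the substitution, i.e. $\psi(1/\xi(\tau_x^+))/\psi(1/x)\Rightarrow\Lambda$ uniform on $(0,1)$. The boundary cases $t\downarrow 0$ and $t\uparrow 1$ (tightness, no mass escaping to $0$ or $1$) follow from $\xi(\tau_x^+)\ge x$ a.s. and from $\xi(\tau_x^+)/x\to\infty$ in probability (the remark after Proposition \ref{prop:overshoot}), respectively.

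For the conditioned version under $\P_1(\cdot\mid\tau_a^-=\infty)$, I would run the same computation with $U$ replaced by the killed potential $U_a(1,\ddr y)$ from \eqref{eqn:resl1}; since $U_a(1,\ddr y)=\big(\kappa(y-1)-e^{p(a-1)}\kappa(y-a)\big)\ddr y$ and $\kappa(y)\sim\phi'(0)e^{py}$ fails — rather $\int_0^x\kappa\sim -1/\psi(1/x)$ is slowly varying — the correction term is of lower order in the relevant regime, so the same equivalence $\P_1(\xi(\tau_x^+)>z\mid\tau_a^-=\infty)\sim\psi(1/z)/\psi(1/x)$ persists; one divides by $\P_1(\tau_a^-=\infty)=1-e^{-p(1-a)}>0$, which is a fixed positive constant and cancels in the conditional law. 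The main obstacle I anticipate is the convolution estimate for the large-$y$ part of $\int_{[0,x)}U(\ddr y)\bar\nu(z-y)$: one must show that the ``interaction'' between the potential mass near $y\approx x$ and the tail $\bar\nu$ evaluated near $z-x$ does not produce an extra slowly varying factor. This is where a careful application of Karamata's theorem (Proposition \ref{prop:kara}) and the monotone density theorem to both $\int_1^\cdot\bar\nu$ and $\int_0^\cdot U(\ddr y)$, together with a dominated-convergence argument letting $\varepsilon\to 0$ after $x\to\infty$, will be required; the slowly varying (rather than properly regularly varying) nature of all quantities makes the bookkeeping delicate but should go through exactly as in the $\alpha=0$ specialization of the proof of Proposition \ref{prop:overshoot} in Section \ref{sec:proofprop1overshoot}.
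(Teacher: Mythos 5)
Your overall strategy is the same as the paper's (an overshoot renewal identity, slow variation plus the UCT, and the Tauberian link between the jump tail and $-\psi(1/\cdot)$), but as written it has two genuine problems. First, your starting identity $\P\big(\xi(\tau_x^+)>z\big)=\int_{[0,x)}U(\ddr y)\,\bar\nu(z-y)$ is only valid when $\xi$ is a subordinator. In the non-subordinator case $0<p<\infty$, which you propose to treat with $U(\ddr y)=\kappa(y)\ddr y$ from \eqref{defn:Unonsub}, the compensation formula produces the occupation measure of $\xi$ \emph{strictly before} $\tau_x^+$, which is carried by $(-\infty,x]$ and is not the restriction of the global potential $U$ (the process revisits $[0,x)$ after $\tau_x^+$), so the identity fails. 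The correct renewal structure is that of the ascending ladder height subordinator $H$: by \eqref{identitiesovershoot} and \eqref{eqn:prop2:4} one has $\P\big(H(\sigma_{x}^{+}-)\in \ddr u,H(\sigma_{x}^{+})\in \ddr v\big)=\mathcal{U}(\ddr u)\Pi(\ddr v-u)$, and the Tauberian step must then be applied to $\bar\Pi(x)\sim\psi(1/x)/(-p)\in\mathcal{R}_0$ as in \eqref{eqn:spd:2-3}, not to $\bar\nu$. This is exactly what the paper's Lemma \ref{lem:alpha=0} does; your plan is repairable by replacing $(U,\bar\nu)$ with $(\mathcal{U},\bar\Pi)$ throughout, but as stated the non-subordinator case rests on a false formula.

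Second, the step you yourself flag as the main obstacle — the contribution of $y$ near $x$ — is not resolved by your proposal. You dismiss the estimate $U\big((\varepsilon x,x)\big)=o\big(U([0,x])\big)$ as ``not true in general'', but in the present slowly varying case it \emph{is} true: by \eqref{eqn:k:1} with $\alpha=0$ the function $x\mapsto U([0,x])$ (resp. $\mathcal{U}(x)$) is slowly varying, so $U([0,\varepsilon x])\sim U([0,x])$, and the crude bound $\int_{(\varepsilon x,x)}U(\ddr y)\bar\nu(z-y)\le U\big((\varepsilon x,x)\big)\bar\nu(z-x)$ together with $\bar\nu(z-x)\sim\bar\nu(z)\asymp 1/U([0,x])$ already kills that region; your substitute (``integration by parts'', ``the heavier slowly varying density governs the convolution'') is not an argument and cannot conclude on its own. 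The paper handles the same region probabilistically, bounding it by $\P\big(\bar\xi(\tau_x^+-)>\delta x\big)\to0$, i.e. the Dynkin--Lamperti fact from \cite[Theorem III.6]{Bertoin96book} applied to $H$, see \eqref{eqn:spd:2-1} and \eqref{eqn:spd:2-4}; either route works, but one of them must actually be invoked. A minor further point: for the conditioned statement the paper does not redo the computation with the killed resolvent $U_a$; it transfers the unconditional limit by the strong Markov property at $\tau_a^-$ and $\P_x(\tau_a^-<\infty)=e^{p(a-x)}$, exactly as at the end of the proof of Proposition \ref{prop:overshoot}, which is both simpler and avoids re-importing the identity problem above into the killed potential.
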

Such a result is not really new. It appears under another form in  Kl\"uppelberg et al. \cite[Theorem 4.4]{KKM04}. Assumptions of the latter theorem are however not satisfied here and we will give a short proof of the proposition which suits our framework. The nonlinear renormalisation is also reminiscent of that obtained in some functional limit theorems for subordinators with slowly varying exponent; see for instance \cite{MS2019}, \cite{KL2013} and \cite{FY2023}. The proof of Proposition \ref{cor:a=0} is in Section \ref{sec=}. This will follow from arguments designed to treat the Case \ref{case2} of Theorem \ref{thm:speed}.

As mentioned in Section \ref{sec:intro},  the random variable $\varrho_{\alpha,\beta}$, defined in \eqref{defn:rho}, can be encountered in different contexts. The next proposition states  that the law of $\varrho_{\alpha,\beta}$ is indeed that of some perpetual integral for an $\alpha$-stable subordinator.
\begin{prop}\label{prop:rho}
For $\beta>\alpha>0$, let $S:=(S(t),t\geq 0)$ be a stable subordinator started from $S(0)=1$ with Laplace exponent $\psi(s)=-c_{0} s^{\alpha}$ for some $\alpha\in(0,1)$. Then for any $n\geq 1$,
\begin{equation}\label{eqn:rho:mn}
\E_1\bigg[\Big(\int_{0}^{\infty}\frac{\ddr t}{S(t)^{\beta}}\Big)^{n}\bigg]=\frac{1}{c_{0}(\beta-\alpha)}\prod_{k=1}^{n}\frac{\Gamma\big(k(\beta-\alpha)+1\big)}{\Gamma\big(k(\beta-\alpha)+\alpha\big)}.
\end{equation}
Moreover, the following large deviation estimate holds:
\begin{equation}\label{eqn:rho:ldp}
\limsup_{t\to\infty}t^{-\frac{1}{1-\alpha}}
\log\P_{1}\bigg(\int_{0}^{\infty}\frac{\ddr u}{S(u)^{\beta}}>t\bigg)
=-(1-\alpha)c_{0}^{\frac{1}{1-\alpha}}(\beta-\alpha)^{\frac{\alpha}{1-\alpha}}.
\end{equation}
For the case $c_{0}(\beta-\alpha)=1$ we have by \eqref{defn:rho}
\[
\varrho_{\alpha,\beta}\overset{\text D}{=}\int_{0}^{\infty}\frac{\ddr t}{S(t)^{\beta}}.
\]
\end{prop}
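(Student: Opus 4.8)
The plan is to prove the three assertions of Proposition \ref{prop:rho} by computing the moments of the perpetual integral $I:=\int_{0}^{\infty}S(t)^{-\beta}\,\ddr t$ via a recursive application of the Markov and scaling properties of the stable subordinator, then to identify the moment generating function with \eqref{defn:rho}, and finally to extract the large deviation estimate from the moment asymptotics through a standard Stirling argument.

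\textbf{Step 1: a recursion for the moments.} Write $m_{n}(x):=\E_{x}[I^{n}]$ for $x\geq 1$. Conditioning on the subordinator run for an infinitesimal amount (or, more cleanly, using that $I$ under $\P_{x}$ decomposes at a fixed time and applying the Markov property at time $t$ and then letting $t\downarrow 0$), one obtains the integral identity
\[
m_{n}(x)=n\int_{0}^{\infty}\E_{x}\big[S(t)^{-\beta}I^{\,n-1}\!\circ\theta_{t}\big]\,\ddr t
=n\int_{0}^{\infty}\E_{x}\big[S(t)^{-\beta}m_{n-1}(S(t))\big]\,\ddr t,
\]
using $I^{n}=n\int_{0}^{\infty}S(t)^{-\beta}(I\circ\theta_{t})^{n-1}\ddr t$, which holds because $I=\int_{0}^{t}S(u)^{-\beta}\ddr u+(I\circ\theta_{t})$ and differentiating $t\mapsto(I\circ\theta_{t})^{n}$. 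By the scaling property of the $\alpha$-stable subordinator, $m_{k}$ is a power function: since $S$ under $\P_{x}$ has the law of $x$ times (an $\alpha$-stable subordinator run at a time rescaled by $x^{-1/\alpha}$... more precisely $(S(t))_{t\ge0}$ under $\P_x$ equals in law $(x S(t/x^{1/\alpha}))_{t\ge0}$ under $\P_1$, up to the constant $c_0$), one checks inductively that $m_{k}(x)=a_{k}x^{-k(\beta-\alpha)}$ for constants $a_{k}$ with $a_{0}=1$. Substituting into the recursion and using the explicit $\P_{1}$-potential measure of the stable subordinator, namely $\int_{0}^{\infty}\E_{1}[S(t)^{-r}]\ddr t=\frac{1}{c_{0}}\cdot\frac{\Gamma(r-\alpha+1)}{\Gamma(r+1)}\cdot\frac{1}{(r-\alpha)/\alpha}$ type formula — concretely $\int_0^\infty \E_1[f(S(t))]\ddr t=\int_1^\infty f(y)U(\ddr y)$ with $\hat U(s)=1/(c_0 s^\alpha)$, hence $U(\ddr y)=\frac{1}{c_0\Gamma(\alpha)}y^{\alpha-1}\ddr y$ on $[1,\infty)$ — one gets a clean recursion $a_{n}=\frac{n}{c_{0}}\cdot\frac{\Gamma(n(\beta-\alpha)+1-\alpha+\alpha)}{\cdots}a_{n-1}$ that telescopes to $a_{n}=\frac{1}{c_{0}(\beta-\alpha)}\prod_{k=1}^{n}\frac{\Gamma(k(\beta-\alpha)+1)}{\Gamma(k(\beta-\alpha)+\alpha)}$, which is \eqref{eqn:rho:mn}. (When $c_{0}(\beta-\alpha)=1$ the leading constant disappears and, comparing with \eqref{defn:rho}, $\E[\varrho_{\alpha,\beta}^{n}]=\prod_{k=1}^n\frac{\Gamma(k(\beta-\alpha)+1)}{\Gamma(k(\beta-\alpha)+\alpha)}$ gives $\varrho_{\alpha,\beta}\overset{D}{=}I$ as soon as the MGF \eqref{defn:rho} has positive radius of convergence, which Step 3 provides.)

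\textbf{Step 2: the moment generating function and determinacy.} Summing $\sum_{n\ge0}\frac{\theta^{n}}{n!}a_{n}$ (for $c_0(\beta-\alpha)=1$) directly reproduces the series in \eqref{defn:rho}. By Stirling, $\frac{\Gamma(k(\beta-\alpha)+1)}{\Gamma(k(\beta-\alpha)+\alpha)}\sim (k(\beta-\alpha))^{1-\alpha}$ as $k\to\infty$, so $\log\prod_{k=1}^{n}(\cdots)\sim (1-\alpha)\big(n\log n+n\log(\beta-\alpha)-n\big)$, whence $a_{n}=e^{(1-\alpha)n\log n(1+o(1))}$. Thus $\limsup_{n}(a_{n}/n!)^{1/n}\cdot\frac{1}{n^{?}}$... more simply $\frac{a_n}{n!}\le C^n n^{-\alpha n}$ up to subexponential factors, so the series $\sum \theta^n a_n/n!$ has radius of convergence $+\infty$ when $\alpha>0$, in particular it is finite for $|\theta|<1/\beta$; and since the MGF is finite in a neighbourhood of $0$, the moment problem is determinate (Carleman's criterion holds trivially since all moments exist and the MGF is entire). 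This both justifies that the moments \eqref{eqn:rho:mn} determine the law and completes $\varrho_{\alpha,\beta}\overset{D}{=}I$.

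\textbf{Step 3: large deviations.} From $a_{n}=\E_{1}[I^{n}]$ and the Stirling asymptotics $\log a_{n}=(1-\alpha)\big(n\log n+n\log(\beta-\alpha)-n\big)+o(n)$, a Chebyshev bound gives $\P_{1}(I>t)\le t^{-n}a_{n}$ for every $n$; optimising over $n$ — taking $n\approx (c_{0}(\beta-\alpha))^{\frac{1}{1-\alpha}}\cdot t^{\frac{1}{1-\alpha}}/(\beta-\alpha)$, roughly — yields the upper bound $\limsup_{t\to\infty}t^{-\frac{1}{1-\alpha}}\log\P_{1}(I>t)\le -(1-\alpha)c_{0}^{\frac{1}{1-\alpha}}(\beta-\alpha)^{\frac{\alpha}{1-\alpha}}$ after reinstating the constant $c_{0}$ (which rescales $I$ by $c_{0}$, equivalently $t$ by $1/c_{0}$, since $S$ with exponent $-c_0 s^\alpha$ is $c_0^{1/\alpha}$ times the normalised one time-changed, and $\int S^{-\beta}$ scales accordingly). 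The matching lower bound follows from the moment asymptotics together with the Paley–Zygmund inequality or, more robustly, from the fact that for a determinate law whose moments grow like $e^{(1-\alpha)n\log n}$ the precise logarithmic large-deviation rate is pinned down by Kasahara's Tauberian theorem for moment sequences (\cite[Theorem 4.12.7]{Bingham1987regular}): $\log\E[I^n]\sim (1-\alpha)n\log n + cn$ is equivalent to $\log\P(I>t)\sim -(1-\alpha)(t/K)^{1/(1-\alpha)}$ with the constant $K$ read off from $c$. Carrying the constants through gives \eqref{eqn:rho:ldp}.

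\textbf{Main obstacle.} The delicate point is Step 1 — rigorously justifying the moment recursion (the passage $t\downarrow 0$, or equivalently the excursion/Markov decomposition of the perpetual integral) and correctly tracking the constant $c_{0}$ and the shift of the starting point from $1$, so that the explicit potential density $U(\ddr y)=\frac{1}{c_{0}\Gamma(\alpha)}y^{\alpha-1}\ddr y$ on $[1,\infty)$ plugs in to telescope cleanly; an off-by-one in the $\Gamma$-arguments there would propagate to both \eqref{eqn:rho:mn} and the rate in \eqref{eqn:rho:ldp}. The large-deviation lower bound in Step 3 is the second sensitive point, where invoking Kasahara's theorem (rather than attempting a direct path-space estimate for the subordinator) is the cleanest route and must be cited with the exact form of the constant.
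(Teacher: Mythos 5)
Your route is genuinely different from the paper's. The paper passes to an exponential functional via the second Lamperti transform, writing $S(t)=\exp(\zeta(I^{-1}(t)))$ with $\Psi(s)=c_0\Gamma(s+\alpha)/\Gamma(s)$, and quotes the Bertoin--Yor recursion $b_r=\frac{r}{\Psi(r(\beta-\alpha))}b_{r-1}$ to get \eqref{eqn:rho:mn}; for the tail it determines the abscissa of convergence $z_0$ of $z\mapsto\E_1[\exp(z\,\iota(\infty)^{1/(1-\alpha)})]$ by a H\"older interpolation of fractional moments (its \eqref{eqn:rho:mgf}) and then quotes Rolski et al.\ to obtain exactly the limsup statement \eqref{eqn:rho:ldp}. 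You instead work directly with the subordinator, using the moment recursion $m_n(x)=n\int_0^\infty\E_x[S(t)^{-\beta}m_{n-1}(S(t))]\,\ddr t$ (the same identity the paper itself uses later, cf.\ \eqref{recursion}) together with the explicit stable potential density, and for the tail a Chernoff bound plus an exponential Tauberian theorem. Correctly executed, this is a viable and more elementary derivation: the corrected recursion reproduces exactly the Bertoin--Yor recursion without any Lamperti transform, and your Chernoff optimisation does produce the constant $(1-\alpha)c_0^{1/(1-\alpha)}(\beta-\alpha)^{\alpha/(1-\alpha)}$.

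However, as written the decisive computation fails. The potential measure of $S$ started from $1$ is the \emph{shifted} measure $U(1,\ddr y)=\frac{(y-1)^{\alpha-1}}{c_0\Gamma(\alpha)}\ddr y$ on $(1,\infty)$, not $\frac{y^{\alpha-1}}{c_0\Gamma(\alpha)}\ddr y$ on $[1,\infty)$; with your unshifted density the recursion integral collapses to $\frac{a_{n-1}}{c_0\Gamma(\alpha)(\beta-\alpha)}$ and no Gamma ratio ever appears, whereas with the shifted density the Beta integral $\int_1^\infty y^{-n(\beta-\alpha)-\alpha}(y-1)^{\alpha-1}\ddr y=B(n(\beta-\alpha),\alpha)$ yields $a_n=\frac{1}{c_0(\beta-\alpha)}\frac{\Gamma(n(\beta-\alpha)+1)}{\Gamma(n(\beta-\alpha)+\alpha)}\,a_{n-1}$. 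This computation, which you leave as ``$\cdots$'' and yourself flag as the main obstacle, is precisely the content of \eqref{eqn:rho:mn}, so the proposal does not yet establish it; note also that this recursion telescopes to $(c_0(\beta-\alpha))^{-n}\prod_{k=1}^n(\cdots)$, so your ``clean recursion'' is inconsistent with the single prefactor you then report (the two agree only when $c_0(\beta-\alpha)=1$, the case actually needed for $\varrho_{\alpha,\beta}$), and the scaling identity should read $S$ under $\P_x$ equal in law to $(xS(t/x^{\alpha}))_{t\ge0}$ under $\P_1$, not $t/x^{1/\alpha}$ (your stated conclusion $m_k(x)=a_kx^{-k(\beta-\alpha)}$ is nonetheless correct). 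For \eqref{eqn:rho:ldp}, Paley--Zygmund only gives the right order with a suboptimal constant, so the exact lower bound rests entirely on the Tauberian step; the theorem you cite in \cite{Bingham1987regular} is formulated for the Laplace transform, so you still need the (standard, but missing) passage from $\log a_n=(1-\alpha)n\log n+cn+o(n)$ to the growth of $\log\E_1[e^{\lambda \iota(\infty)}]$, or alternatively to the finiteness/infiniteness of $\E_1[\exp(z\,\iota(\infty)^{1/(1-\alpha)})]$ as in the paper, before the rate can be read off.
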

\begin{rmk}
The perpetual integral $\int_{0}^{\infty}\frac{\ddr t}{S(t)^{\beta}}$ under $\mathbb{P}_1$ can also be seen as the explosion time of the nonlinear CSBP started from $1$, with rate function $R(x)=x^{\beta}$, and parent L\'evy process the subordinator $S$.
\end{rmk}

\section{Proofs of the main results}\label{proofs}
This section is dedicated to the proofs of our main results.
We first establish the criterion of explosion stated in Theorem \ref{thmexplosiong1}. Corollary \ref{cor:1} on the regularly varying cases  then follows easily.
We next prove Proposition \ref{prop:overshoot} on the asymptotic overshoot distribution of $\xi$ and Proposition \ref{prop:rho} which provides a representation of $\varrho_{\alpha,\beta}$ in terms of a perpetual integral of a stable subordinator.
Then we investigate the residual explosion time $T_{\infty}^{+}-T_{x}^{+}$ for large $x$ by estimating its moments and establish the convergences of $X(T_{\infty}^{+}-t)$  in Theorem \ref{thm:speed} case by case.
\subsection{Proof of Proposition \ref{prop:overshoot} and Proposition \ref{prop:rho}}

\subsubsection{Proof of Proposition \ref{prop:overshoot} (study of the overshoot)}\label{sec:proofprop1overshoot}
Assume first that $\xi$ is not a subordinator. Denote the running supremum of the process $\xi$ by $\bar{\xi}(t):=\sup\{\xi(s), s\le t\}$, and call $H:=(H(t))_{t\geq 0}$ the ladder height process, that is,
\begin{equation}\label{defn:ladder}
H(t)=\bar{\xi}(L^{-1}(t))
\end{equation}
for the local time $L$ of $\bar{\xi}-\xi$ at $0$. By definition of the latter, $L^{-1}(t)$ is a zero of $\bar{\xi}-\xi$; see \cite[Proposition  IV.7(iii)]{Bertoin96book} and one has therefore almost surely for all $t\geq 0$, $$H(t)=\bar{\xi}(L^{-1}(t))=\bar{\xi}(L^{-1}(t))-\xi(L^{-1}(t))+\xi(L^{-1}(t))=\xi(L^{-1}(t)).$$
Moreover, when the process $\xi$ goes above level $x$ for the first time, a new supremum is reached and we thus have
\begin{equation}\label{identitiesovershoot}
\P\big(\bar{\xi}(\tau_{x}^{+}-)\in \ddr u,\xi(\tau_{x}^{+})\in \ddr v\big)=\P\big(H(\sigma_{x}^{+}-)\in \ddr u,H(\sigma_{x}^{+})\in \ddr v\big)
\end{equation}
where $\sigma_{x}^{+}$ denotes the first passage time above $x$ of the process $H$. We are thus left to study the overshoot of $(H(t))_{t\geq 0}$;
see for instance Doney and Maller \cite{Doney2002} and \cite[Lemma 2.5]{LiZ2021} where this technique is also applied.

According to \cite[Theorem VII.4]{Bertoin96book}, $H$ is a subordinator with Laplace exponent given by
\begin{equation}\label{eqn:ladderexponent}
\Phi(s):=\frac{-1}{t}\log\E\big[\exp(-s H(t))\big]=\frac{\psi(s)}{s-p}\quad\text{for all}\quad s>0.
\end{equation}
Even though we do not  need it later,  for completeness we also identify  the drift and the L\'evy measure of $H$. It follows from \eqref{defn:psi} that
\begin{equation}\label{eqn:prop2:5}
\begin{aligned}
\Phi(s)=\frac{\psi(s)-\psi(p)}{s-p}
=&\ \Big(\frac{\sigma^{2}}{2}(s+p)+\gamma\Big)
+\int_{0}^{\infty}\Big(\frac{e^{(p-s)z}-1}{s-p} e^{-pz}+z \mathbf{1}(z<1)\Big)\nu(\ddr z)\\
=&\ \Big(\frac{\sigma^{2}}{2}s+\frac{\sigma^{2}}{2}p+\gamma\Big)
+\int_{0}^{\infty}\int_{0}^{z}\Big(\mathbf{1}(z<1)-e^{-pz+(p-s)y}\Big) \ddr y\nu(\ddr z).
\end{aligned}
\end{equation}
Since $p>0$ and $\psi(p)=0$, we have for $s=0$
\[
0=\Big(\frac{\sigma^{2}}{2}p+\gamma\Big)+\int_{0}^{\infty}\int_{0}^{z}\Big(\mathbf{1}(z<1)-e^{-pz+py}\Big) \ddr y\nu(\ddr z).
\]
Therefore, substituting the above identity into \eqref{eqn:prop2:5} gives
\begin{equation}\label{eqn:prop2:2}
\begin{aligned}
\Phi(s)=\frac{\psi(s)}{s-p}
=&\ \frac{\sigma^{2}}{2}s+ \int_{0}^{\infty}\int_{0}^{z}\Big(e^{-pz+py}-e^{-pz+(p-s)y}\Big)\nu(\ddr z)\ddr y\\
=&\ \frac{\sigma^{2}}{2}s+\int_{0}^{\infty}\big(1-e^{-sy}\big)\Big(\int_{y}^{\infty}e^{p(y-z)}\nu(\ddr z)\Big) \ddr y\\
=:&\ \frac{\sigma^{2}}{2}s+\int_{0}^{\infty}\big(1-e^{-sy}\big)\Pi(\ddr y)
= s\Big(\frac{\sigma^{2}}{2}+ \int_{0}^{\infty}e^{-sz}\bar{\Pi}(z) \ddr z\Big),
\end{aligned}\end{equation}
where $\bar{\Pi}(z):=\int_{z}^{\infty}\Pi(\ddr u)$; see \cite[Exercice 6.5]{Kyprianou2014book}. \\

We next study the limit in law of $\xi(\tau_{x}^{+})/x=H(\sigma_{x}^{+})/x$ as $x$ goes to $\infty$. Recall our assumption $-\psi\in\mathcal{R}_{\alpha}$ at $0$. The formula in \eqref{eqn:ladderexponent} entails that the subordinator $H$ has also a Laplace exponent regularly varying at $0$ with index $\alpha$. Note that the study of scaling limit in such a subordinator setting is the purpose of \cite[Exercice III.3-4]{Bertoin96book}. We provide a solution below.

Denote by $\mathcal{U}$ the potential measure of $H$, that is,
\[
\mathcal{U}(x):=\E\Big[\int_{0}^{\infty}\mathbf{1}\big(H(t)\le x\big)\ddr t\Big].
\]
Then
\[\hat{\mathcal{U}}(s):=\int_{0}^{\infty}e^{-sy}\mathcal{U}(\ddr y)=\frac{1}{\Phi(s)} \text{ for all }s>0;\]
see \cite[Page 74]{Bertoin96book}. According to \cite[Proposition III.2, Theorem III. 5]{Bertoin96book}, one has
\begin{equation}
\label{eqn:prop2:4}
\P\big(H(\sigma_{x}^{+}-)\in \ddr u,H(\sigma_{x}^{+})\in \ddr v\big)
=\mathcal{U}(\ddr u)\Pi(\ddr v-u)
\quad\text{and}\quad
\P\big(H(\sigma_{x}^{+})=x\big)=\frac{\sigma^{2}}{2}u(x)
\end{equation}
where $u$ is the continuous and positive version of the density of $\mathcal{U}$ when $\sigma>0$. Similarly to \cite[Lemma III.7]{Bertoin96book}, the following calculations follow from \eqref{eqn:prop2:4}.
\begin{align}\label{eqn:prop2:3}
\int_{0}^{\infty}e^{-qt}\E\big[e^{-\theta x^{-1} \xi(\tau_{xt}^{+})}\big] \ddr t
=&\ \iiint_{u+v>xt\ge u}e^{-qt-\frac{\theta}{x}(u+v)}\mathcal{U}(\ddr u)\Pi(\ddr v) \ddr t+\int_{0}^{\infty}e^{-(q+\theta)t}\frac{\sigma^{2}}{2}u(xt) \ddr t\non\\
=&\ \frac{1}{q}\iint_{u,v>0}\mathcal{U}(\ddr u)\Pi(\ddr v)e^{-\frac{\theta}{x}(u+v)}\big(e^{-\frac{q}{x}u}-e^{-\frac{q}{x}(u+v)}\big)+\frac{\sigma^{2}}{2x}\hat{\mathcal{U}} \left(\frac{q+\theta}{x}\right)\non\\
=&\ \frac{1}{q}\hat{\mathcal{U}}\Big(\frac{q+\theta}{x}\Big)\left(\Phi\Big(\frac{q+\theta}{x}\Big)-\Phi\Big(\frac{\theta}{x}\Big)-\frac{\sigma^{2}}{2}\frac{q}{x}\right)+\frac{\sigma^{2}}{2x}\hat{\mathcal{U}}\big(\frac{q+\theta}{x}\big)\non\\
=&\ \frac{1}{q}\Big(1-\frac{\Phi(\theta/x)}{\Phi((\theta+q)/x)}\Big).
\end{align}
For $\alpha\in(0,1)$,
\[
\lim_{x\to\infty}\int_{0}^{\infty}e^{-qt}\E\Big[e^{-\theta x^{-1} \xi(\tau_{xt}^{+})}\Big] \ddr t
= \frac{1}{q}\Big(1-\Big(\frac{\theta}{\theta+q}\Big)^{\alpha}\Big)
= \int_{0}^{\infty}e^{-qt}\Big(\int_{t}^{\infty}\frac{\theta^{\alpha}u^{\alpha-1}}{\Gamma(\alpha)}e^{-\theta u} \ddr u\Big) \ddr t.
\]
Further note that $x^{-1}\xi(\tau_{xt}^{+})$ is increasing in $t>0$.
So, the function $\E\big[e^{-\theta x^{-1}\xi(\tau_{xt}^{+})}\big]$ is decreasing in $t$.
We then obtain by integration by parts that
\[\lim_{x\to\infty}\int_{0}^{\infty}e^{-qt}\ddr \bigg(1-\E\Big[e^{-\theta x^{-1} \xi(\tau_{xt}^{+})}\Big]\bigg)
=\int_{0}^{\infty}e^{-qt}\ddr \Big(\int_{0}^{t}\frac{\theta^{\alpha}u^{\alpha-1}}{\Gamma(\alpha)}e^{-\theta u} \ddr u\Big).
\]
which implies that for a.e. $t>0$, and then for all $t>0$ by continuity,
\[
\lim_{x\to\infty}\left( 1-\E\Big[e^{-\theta x^{-1} \xi(\tau_{xt}^{+})}\Big]\right)
=\int_{0}^{t}\frac{\theta^{\alpha}u^{\alpha-1}}{\Gamma(\alpha)}e^{-\theta u} \ddr u.
\]
In particular, for $t=1$
\[\begin{aligned}
\lim_{x\to\infty}&\E\Big[1-e^{-\theta x^{-1} \xi(\tau_{x}^{+})}\Big]
=\theta \times \theta^{\alpha-1}\int_{0}^{1}\frac{u^{\alpha-1}}{\Gamma(\alpha)}e^{-\theta u} \ddr u\\
=&\ \theta \int_{0}^{\infty}\frac{v^{-\alpha}e^{-\theta v}}{\Gamma(1-\alpha)}\ddr v
\int_{0}^{1}\frac{u^{\alpha-1}e^{-\theta u}}{\Gamma(\alpha)} \ddr u
= \theta \int_{0}^{\infty}e^{-\theta z} \ddr z\int_{0}^{1\wedge z}\frac{u^{\alpha-1}(z-u)^{-\alpha}}{\Gamma(\alpha)\Gamma(1-\alpha)} \ddr u\\
=&\ 1-e^{-\theta}+ \int_{1}^{\infty}\theta e^{-\theta z}\ddr z\int_{0}^{1}\frac{u^{\alpha-1}(z-u)^{-\alpha}}{\Gamma(\alpha)\Gamma(1-\alpha)} \ddr u
=\int_0^{\infty}\theta e^{-\theta z}\mathbb{P}(\chi_\alpha>z)\ddr z
\end{aligned}\]
where we recall \eqref{defn:chi}, the identity $\frac{1}{\Gamma(\alpha)\Gamma(1-\alpha)}=\frac{\sin \alpha \pi}{\pi}$
and we make use of the facts that
\[ \int_{0}^{1\wedge z}\frac{u^{\alpha-1}(z-u)^{-\alpha}}{\Gamma(\alpha)\Gamma(1-\alpha)} \ddr u=1 \text{ and } \mathbb{P}(\chi_\alpha>z)=1, \,\, \forall z<1.
\]

The desired result is proved. The results for the degenerate cases of $\alpha=0$ or $1$ can also be derived from \eqref{eqn:prop2:3}.

If $\xi$ is a subordinator, i.e. $-\psi$ takes the form \eqref{defn:psisub}, one can directly obtain the result from the same calculation in \eqref{eqn:prop2:3} by replacing in \eqref{eqn:prop2:4}, $\Phi$, $\mathcal{U}$ and the constant $\sigma^{2}/2$,  respectively, with the Laplace exponent of $\xi$, $-\psi$, its potential measure $U$ in \eqref{eqn:potentialsub} and its drift $\delta$.
Heuristically, in the subordinator case, the ladder height process is simply  the process itself.

\medskip

We now verify that the convergence in law $x^{-1}\xi(\tau_{x}^{+})\Longrightarrow \chi_\alpha$ as $x$ goes to $\infty$ also holds under $\P_{1}(\cdot|\tau_{a}^{-}=\infty)$ for every $1>a>0$. Let $f$ be bounded continuous function, write
\begin{equation}\label{uselessbutnicer}
\E_{1}\big[f\big(\xi(\tau_{x}^{+})/x\big); \tau^{-}_{a}=\infty\big]
=\E_{1}\big[f\big(\xi(\tau_{x}^{+})/x\big)\big]
-\E_{1}\big[f\big(\xi(\tau_{x}^{+})/x\big); \tau^{-}_{a}<\infty\big].
\end{equation}
By the Markov property at time $\tau^{-}_{a}$,
\begin{align}\label{checkovershoot-nonextinction}
&\E_{1}\big[f(\big(\xi(\tau_{x}^{+})/x\big); \tau^{-}_{a}<\infty\big]\nonumber\\
&=\ \E_{1}\big[f\big(\xi(\tau_{x}^{+})/x\big); \tau^{-}_{a}<\tau_{x}^{+}, \tau^{-}_{a}<\infty\big]
+ \E_{1}\big[f\big(\xi(\tau_{x}^{+})/x\big); \tau_{x}^{+}<\tau^{-}_{a}<\infty\big]\nonumber\\
&=\ \E_{a}\big[f\big(\xi(\tau_{x}^{+})/x)\big)\big]\cdot\P_{1}\big(\tau_{a}^{-}<\tau_{x}^{+}\big)+\E_{1}\big[f\big(\xi(\tau_{x}^{+})/x\big); \tau_{x}^{+}<\tau_{a}^{-}<\infty\big]\\
&\underset{x\rightarrow \infty}{\longrightarrow} \E\big[f(\chi_\alpha)\big]\cdot\P_{1}(\tau_{a}^{-}<\infty)\nonumber,
\end{align}
where we use the convergence  $x^{-1}\xi(\tau^{+}_{x})\Longrightarrow \chi_\alpha$ previously established, the fact that $\tau_{x}^{+}\underset{x\rightarrow \infty}{\rightarrow}\infty$ a.s. and the boundedness of $f$. The latter in particular entails that the second term in \eqref{checkovershoot-nonextinction} vanishes as it is bounded as follows.
\[
\big \lvert \E_{1}\big[f\big(\xi(\tau_{x}^{+})/x\big); \tau_{x}^{+}<\tau_{a}^{-}<\infty\big] \big \lvert \leq ||f||_{\infty}\cdot\P_{1}(\tau_{x}^{+}<\tau_{a}^{-}<\infty)
\le ||f||_{\infty}\cdot\P_{x}(\tau_{a}^{-}<\infty)=||f||_{\infty}\cdot e^{p(a-x)}.\]
Going back to \eqref{uselessbutnicer}, we get indeed
\[
\E_{1}\big[f\big(\xi(\tau_{x}^{+})/x\big)\big| \tau^{-}_{a}=\infty\big]
\underset{x\rightarrow \infty}{\longrightarrow}
\E\big[f(\chi_\alpha)\big].\]

\subsubsection{Proof of Proposition \ref{prop:rho} (study of $\varrho_{\alpha,\beta}$)}
\begin{proof}
Let $S$ be an $\alpha$-stable subordinator started from $1$ with $\psi(s)=-c_{0}s^{\alpha}$ for some $\alpha\in(0,1)$. Applying the second Lamperti transform, see \cite[Theorem 13.1]{Kyprianou2014book}, we have
\[
S(t)=\exp\big( \zeta\big(I^{-1}(t)\big)\big)
\]
where by a direct calculation one can show that $\zeta$ is an SPLP started from $0$, with Laplace exponent
\begin{equation}\label{eqn:rho1}
\Psi(s):= -\log\E\big[e^{-s \zeta(1)}\big]
=\frac{c_{0}s}{\Gamma(1-\alpha)}\int_{0}^{\infty}e^{-sz}\big(e^{z}-1\big)^{-\alpha}\,dz=c_{0}\frac{\Gamma(s+\alpha)}{\Gamma(s)}
\end{equation}
and where $I(t):=\int_{0}^{t}e^{\alpha \zeta(s)}\,\ddr s$ and $I^{-1}$ denotes its inverse function. Then, for $\beta>\alpha$, we have
\[
\iota(\infty):=\int_{0}^{\infty}S(t)^{-\beta}\,\ddr t
=\int_{0}^{\infty}e^{-\beta \zeta(I^{-1}(t))}\,\ddr t
=\int_{0}^{\infty}e^{-\beta \zeta(s)}\,\ddr I(s)
=\int_{0}^{\infty}e^{(\alpha-\beta) \zeta(s)}\,\ddr s.
\]
It is proved by Bertoin and Yor  \cite{Bertoin2005} that, for all $r>0$
\[
b_{r}:=\E_1\big[\iota(\infty)^{r}\big]
=\frac{r}{\Psi(r(\beta-\alpha))}\E_1\big[\iota(\infty)^{r-1}\big]
=\frac{r}{\Psi(r(\beta-\alpha))} b_{r-1}.
\]
In particular, taking $r=n$ and making use of \eqref{eqn:rho1} gives the expression \eqref{eqn:rho:mn} for the moment generating function of $\iota(\infty)$.

We are now going to study the tail behaviour of $\iota(\infty)$. Let $\gamma_{0}(1-\alpha)=1$ and $z_{0}=(1-\alpha)c_{0}^{\gamma}(\beta-\alpha)^{\alpha\gamma}$. We will show that
\begin{equation}\label{eqn:rho:mgf}
\E_1\Big[\exp\Big(z\iota(\infty)^{\gamma}\Big)\Big]=
\left\{\begin{array}{ll}
<\infty & \text{if $\gamma<\gamma_{0}$};\\
<\infty & \text{if $\gamma=\gamma_{0}$ and $z<z_{0}$};\\
=\infty & \text{if $\gamma=\gamma_{0}$ and either $z>z_{0}$ or $\gamma>\gamma_{0}$}.
\end{array}\right.
\end{equation}

To prove \eqref{eqn:rho:mgf}, we make use of the asymptotic equivalence
\begin{equation}
\label{eqn:rho4}
\frac{b_{r}}{b_{r-1}}=
\frac{r}{\Psi\big(r(\beta-\alpha)\big)}=\frac{1}{c_{0}(\beta-\alpha)}\frac{\Gamma\big(r(\beta-\alpha)+1\big)}{\Gamma\big(r(\beta-\alpha)+\alpha\big)}\sim
\frac{r^{1-\alpha}}{c_{0}(\beta-\alpha)^{\alpha}}
\quad\text{as $r\to\infty$};
\end{equation}
see e.g. \cite[6.1.47]{Abramowitz1988} and the following version of H\"older's inequality
\begin{equation}\label{eqn:rho2}
(b_{p})^{\frac{q-r}{p-r}}(b_{r})^{\frac{p-q}{p-r}}
\ge b_{\frac{p(q-r)+r(p-q)}{p-r}}=b_{q}
\quad\text{for all $p\ge q\ge r>1$}.
\end{equation}
For each fixed $\gamma>0$, denoting  $m=1+[\gamma]\ge2$ we have from \eqref{eqn:rho4} that
\[
a_{n}:=\E_1\big[(\iota(\infty))^{n\gamma}\big]
=b_{n\gamma}
=\prod_{k=0}^{m-2}\frac{b_{n\gamma-k}}{b_{n\gamma-k-1}}b_{n\gamma-m+1}
=\prod_{k=0}^{m-1}\frac{b_{n\gamma-k}}{b_{n\gamma-k-1}}b_{n\gamma-m}.
\]
Taking $(p,q,r)=(n\gamma-m+1, (n-1)\gamma, n\gamma-m)$ in \eqref{eqn:rho2},
we have from he above that
\[\begin{aligned}
a_{n}=&\ \Big(\prod_{k=0}^{m-2}\frac{b_{n\gamma-k}}{b_{n\gamma-k-1}}b_{n\gamma-m+1}\Big)^{\frac{q-r}{p-r}}
\Big(\prod_{k=0}^{m-1}\frac{b_{n\gamma-k}}{b_{n\gamma-k-1}}b_{n\gamma-m}\Big)^{\frac{p-q}{p-r}}\\
=&\ \prod_{k=0}^{m-2}\frac{b_{n\gamma-k}}{b_{n\gamma-k-1}}\Big(\frac{b_{n\gamma-m+1}}{b_{n\gamma-m}}\Big)^{1+\gamma-m}
\big(b_{p}\big)^{\frac{q-r}{p-r}}\big(b_{r}\big)^{\frac{p-q}{p-r}}\\
\ge&\ \prod_{k=0}^{m-2}\frac{b_{n\gamma-k}}{b_{n\gamma-k-1}}\Big(\frac{b_{n\gamma-m+1}}{b_{n\gamma-m}}\Big)^{1+\gamma-m} b_{q}
= \prod_{k=0}^{m-2}\frac{b_{n\gamma-k}}{b_{n\gamma-k-1}}\Big(\frac{b_{n\gamma-m+1}}{b_{n\gamma-m}}\Big)^{1+\gamma-m} a_{n-1}.
\end{aligned}\]
Making use of \eqref{eqn:rho4} and the fact that  $\gamma,m$ are constants, we have as $n\to\infty$
\begin{equation}\label{eqn:04}
\frac{a_{n}}{a_{n-1}}\ge \prod_{k=0}^{m-2}\frac{b_{n\gamma-k}}{b_{n\gamma-k-1}}\Big(\frac{b_{n\gamma-m+1}}{b_{n\gamma-m}}\Big)^{1+\gamma-m}
\sim n^{(1-\alpha)\gamma}\cdot\Big(\frac{\gamma^{1-\alpha}}{c_{0}(\beta-\alpha)^{\alpha}}\Big)^{\gamma}.
\end{equation}
On the other hand,
taking $(p,q,r)=(n\gamma,n\gamma-1,(n-1)\gamma)$ in \eqref{eqn:rho2},
we have
\[
b_{n\gamma-1}
=b_{q}\le (b_{p})^{\frac{\gamma-1}{\gamma}}(b_{r})^{\frac{1}{\gamma}}
=b_{n\gamma}\cdot\big(a_{n-1}/a_{n}\big)^{1/\gamma}
\]
which together with \eqref{eqn:rho4} gives
\begin{equation}\label{eqn:05}
\frac{a_{n}}{a_{n-1}}
\le\bigg(\frac{b_{n\gamma}}{b_{n\gamma-1}}\bigg)^{\gamma}
\sim n^{(1-\alpha)\gamma}\cdot\Big(\frac{\gamma^{1-\alpha}}{c_{0}(\beta-\alpha)^{\alpha}}\Big)^{\gamma}.
\end{equation}

Combining \eqref{eqn:04} and \eqref{eqn:05} one has that the Taylor expansion
\[
\E\Big[e^{z \iota(\infty)^{\gamma}}\big]
=1+\sum_{n\ge1}\frac{z^{n}}{n!}\E\Big[\iota(\infty)^{n\gamma}\Big]
=\sum_{n\ge0}\frac{a_{n}}{n!}z^{n}
\]
is finite for some $z>0$ only if $\gamma(1-\alpha)\leq 1$. Moreover, for $\gamma=\gamma_{0}=\frac{1}{1-\alpha}$, the series above has a convergence radius of $c_{0}^{\gamma}(\beta-\alpha)^{\alpha\gamma}\gamma_{0}^{-1}=z_{0}$.

Finally, combining Rolski et al. \cite[Theorem 2.3.1]{Rolski1999book} and \eqref{eqn:rho:mgf}, we obtain the large deviation estimate
\[\underset{t\rightarrow \infty}{\limsup}\,t^{-\gamma_0}\log \mathbb{P}(\iota(\infty)>t)=-z_0\]
which gives \eqref{eqn:rho:ldp}.
\end{proof}

\subsection{Proofs of Theorem \ref{thmexplosiong1} and Corollary \ref{cor:1}}
We proceed here with the proof of the integral test for explosion given in \eqref{explosionequiv} (in terms of rate function $R$ and Laplace exponent $\psi$).

\subsubsection{Proof of Theorem \ref{thmexplosiong1} (explosion criterion)}
Recall that the parent L\'evy  process $\xi$ drifts towards $+\infty$, hence the event $\{T_a^{-}=\infty\}$ has positive probability.

Our proof is based on the following equivalences, see \cite[Theorem 2.2]{LPSZ2022} and Kolb and Savov \cite[Theorem 2.1]{Kolb2020}: for any $x>0$,
\begin{align}\label{eqn:explode1-3}
\P_{x}\big(T_{\infty}^{+}<\infty\big)>0&\Longleftrightarrow \P_{x}\big(T_{\infty}^{+}<\infty|T_a^{-}=\infty\big)=1 \text{ for any } a\in (0,x) \nonumber
\\
&\Longleftrightarrow
\mathbb{E}_x\big(\eta(\tau_a^{-})\big)<\infty \text{ for any } a\in (0,x).
\end{align}
Notice that $$\mathbb{E}_x\big(\eta(\tau_a^{-})\big)=U_{a}(1/R)(x)$$ where we recall $U_{a}(x,\ddr y)$ denotes the potential measure of the process killed when it is below level $a$. See \eqref{eqn:resl} for the non-subordinator case. We are going to show that when $R$ is increasing and has a growth satisfying  \eqref{eqn:R1}, the condition $U_{a}(1/R)(x)<\infty$ is equivalent to our more explicit condition \eqref{explosionequiv}.

\medskip

We first treat the simple case where $\xi$ is a subordinator, recalling that in this case $\tau_{a}^{-}=\infty$, $\mathbb{P}_{x}$-a.s. for all $x>a$.
Notice that its Laplace exponent is $-\psi$. One has by definition for any $x>a>0$,
\[
U_{a}\big(1/R\big)(x)=U\big(1/R\big)(x)=\int_{0}^{\infty}\frac{U(\ddr z)}{R(x+z)}
\]
with $U(\ddr z)$ the potential measure of the subordinator $\xi$, see \eqref{defn:Usub}. Recall the condition \eqref{eqn:R1}: $$\limsup_{z\to\infty}\frac{R(z+x)}{R(z)}<\infty \text{ for every $x>0$},$$ and the assumption that $R$ is increasing. One has $\frac{1}{R(z+x)} \asymp \frac{1}{R(z)}$. Hence
$U_{a}\big(1/R\big)(x)<\infty$ if and only if $\int^{\infty}\frac{U(\ddr z)}{R(z)}<\infty$.
Setting $U(y):=U([0,y])$ for any $y\geq 0$, we see, by applying Fubini-Tonelli's theorem, that
\begin{equation*}
\int^{\infty}\frac{U(\ddr z)}{R(z)}<\infty \Longleftrightarrow \int^{\infty}U(y)\ddr\big(-\frac{1}{R(y)}\big)<\infty.
\end{equation*}

By \cite[Proposition III.1]{Bertoin96book}, one has the estimate $U(y)\asymp \frac{1}{-\psi(1/x)}$ and we get the necessary and sufficient condition. The fact that explosion is almost sure is a consequence of \cite[Theorem 2.1]{Kolb2020}.

\medskip

We now assume that $\xi$ is not a subordinator.
The study in this case is more involved as the L\'evy process may go below any level $a$. Define $W_{p}(y):=e^{-py}W(y)$ for any $y\geq 0$. This function is the scale function of the L\'evy process $\xi$ under the new measure $\P^{(p)}$ satisfying $\D\frac{d\P^{(p)}}{d\P}\Big|_{\mathcal{F}_{t}}=e^{-p \xi(t)}$ for all $t\geq 0$; see  e.g. \cite[Chapter 3.3]{Kyprianou2014book}. Note in particular that $W_{p}$ is nondecreasing.
It can be checked directly from \eqref{defLTscalefunction} that
\[
\int_{0}^{\infty}e^{-sy}e^{py}W_{p}(\ddr y)
=\int_{0}^{\infty}e^{-sy}\big(W(\ddr y)-pW(y)\ddr y\big)
=\frac{s-p}{\psi(s)}=\frac{1}{\Phi(s)}=\hat{\mathcal{U}}(s)
\]
for any $s>p$, where $\mathcal{U}$ and $\Phi$ are defined in the proof of  Proposition \ref{prop:overshoot},
and $\hat{\mathcal{U}}$ denotes the Laplace transform of $\mathcal{U}$. Then \cite[Proposition III.1]{Bertoin96book} ensures that for all large $x$
\begin{equation}\label{eqn:explode1-2}
\int_{0}^{x}e^{py}W_{p}(\ddr y)=\mathcal{U}(x)	\asymp \frac{1}{\Phi(1/x)}=\frac{1/x-p}{\psi(1/x)}.
\end{equation}
On the other hand, we have from \eqref{eqn:resl}
\begin{equation}\label{eqn:explode1-1}
\begin{aligned}
e^{px}U_{a}(1/R)(x)
=&\ e^{px}\int_{a}^{\infty}\frac{\ddr y}{R(y)}\big(e^{p(a-x)}W(y-a)-W(y-x)\big)\\
=&\ \int_{a}^{\infty}\frac{e^{py}}{R(y)}\big(W_{p}(y-a)-W_{p}(y-x)\big)\ddr y\\
=&\ \int_{a}^{\infty}\frac{e^{py}\ddr y}{R(y)}\int_{y-x}^{y-a}W_{p}(\ddr z)\\
=&\ \int_{[0,\infty)}e^{pz}W_{p}(\ddr z)\Big(\int_{a}^{x}\frac{e^{py}\ddr y}{R(z+y)}\Big)\\
=&\int_{[0,\infty)}\mathcal{U}(\ddr z)\Big(\int_{a}^{x}\frac{e^{py}\ddr y}{R(z+y)}\Big).
\end{aligned}
\end{equation}
where in the third line we use the fact that $W_p$ is nondecreasing and where in the second to the last equality, we have applied Fubini-Tonelli's theorem and performed a  change of variable. Using the condition \eqref{eqn:R1} and the assumption that $R$ is increasing, it is plain that
\begin{equation}\label{equalence}
\int_{a}^{x}\frac{e^{py}\ddr y}{R(z+y)}\asymp \frac{1}{R(z)}\int_{a}^{x}e^{py}\ddr y
\quad\text{for all large $z$ and fixed $x>a>0$},
\end{equation}
Plugging this into \eqref{eqn:explode1-1}, we see that
\begin{equation}\label{eqn:explode1-4}
U_{a}\big(1/R\big)(x)<\infty
\quad\text{ if and only if}\quad
\int^{\infty}\frac{\mathcal{U}(\ddr z)}{R(z)}<\infty.
\end{equation}
Moreover, by Fubini-Tonelli's theorem and the estimate \eqref{eqn:explode1-2}, we see that
\begin{equation}\label{eqn:explode1-4}
\int^{\infty}\frac{\mathcal{U}(\ddr z)}{R(z)}<\infty
\Longleftrightarrow \int^{\infty}\mathcal{U}(y)\ddr\big(-\frac{1}{R(y)}\big)<\infty \Longleftrightarrow \int^{\infty}\frac{-1}{\psi(1/y)}\ddr\big(-\frac{1}{R(y)}\big)<\infty.
\end{equation}
This finishes the proof.\qed

\subsubsection{Proof of Corollary \ref{cor:1} (application in the regularly varying case)}
For $R\in\mathcal{R}_{\beta}$ at $\infty$, by Lemma \ref{lem:f-} we have
\begin{equation}\label{eqn:explode1-5}
\frac{1}{R(x)}\sim \beta\int_{x}^{\infty}\frac{\ddr z}{zR(z)}
\quad\text{as $x\to\infty$}.
\end{equation}
In addition, \eqref{eqn:R1} holds since $R(x+a)\sim R(x)$ as $x$ goes to $\infty$. Applying the identities in \eqref{eqn:explode1-4} with $1/R$ replaced by the tail integral above proves the first statement of Corollary \ref{cor:1}. The classification of  explosion according to $\alpha$ and $\beta$ when $-\psi\in \mathcal{R}_\alpha$ near $0$ follows plainly by studying whether the test integral is finite or not.\qed

\subsection{Proof of Theorem \ref{thm:speed}}
Until the end of this subsection we focus on the case $\beta\ge\alpha$ and \eqref{explosionconditionpsiR} hold for which explosion can occur.
\subsubsection{Renormalization in law of the explosion time}
\textbf{Case $\beta>\alpha>0$}. The following lemma will be applied to show the weak convergence of $T_{\infty}^{+}$ for $X$ starting from large initial values.
Its proof is a bit technical and is deferred to the Appendix. Recall from Section \ref{sec:preliminaries} that by definition
\[
U_{a}f(x):=\mathbb{E}_{x}\bigg[\int_{0}^{\infty}f(\xi_t)\mathbf{1}(t<\tau_{a}^{-})\ddr t\bigg]
\]
for any bounded measurable function $f$ and any $a>0$.
\begin{lem}[Key equivalence]\label{lem+:uf}
Suppose that $p\in (0,\infty]$, $-\psi\in\mathcal{R}_{\alpha}$ at $0$,
and $f\in\mathcal{R}_{-\gamma}$ at $\infty$ for some $\gamma>\alpha\ge0$. For any $a>0$,
we have
\[U_{a}f(x)\sim \frac{\Gamma(\gamma-\alpha+1)}{\Gamma(\gamma)}
\int_{x}^{\infty}\frac{f(y)}{-y \psi(1/y)}\ddr y
\quad\text{as $x\to\infty$}.\]

\end{lem}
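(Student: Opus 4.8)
The plan is to replace the killed potential $U_{a}f$ by the free potential $Uf(x):=\E_{x}\big[\int_{0}^{\infty}f(\xi_{s})\,\ddr s\big]$ and then evaluate $Uf(x)$ asymptotically. When $p=\infty$, i.e.\ $\xi$ is a subordinator, $\tau_{a}^{-}=\infty$ a.s.\ and $U_{a}f=Uf$, so there is nothing to reduce. When $0<p<\infty$, the parent process is spectrally positive and can only pass below a level continuously, so it creeps downwards and $\xi(\tau_{a}^{-})=a$ on $\{\tau_{a}^{-}<\infty\}$; the strong Markov property at $\tau_{a}^{-}$, together with $\P_{x}(\tau_{a}^{-}<\infty)=e^{-p(x-a)}$ from \eqref{exitprobability}, gives $Uf(x)=U_{a}f(x)+e^{-p(x-a)}\,Uf(a)$. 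Here $Uf(a)<\infty$: writing it as $\int_{0}^{\infty}f(a+w)\,U(\ddr w)$ plus a bounded term and integrating by parts against $V(w):=U([0,w])$, the boundary term $f(a+w)V(w)$ vanishes at infinity and the remaining integrand $|f'(a+w)|V(w)$ is integrable there, because $f\in\mathcal R_{-\gamma}$, $V\in\mathcal R_{\alpha}$ (by \eqref{eqn:k:1}) and $\gamma>\alpha$ --- this index arithmetic is exactly where the hypothesis $\gamma>\alpha$ enters. Since the final expression for $Uf(x)$ will be regularly varying, the term $e^{-p(x-a)}Uf(a)$ is negligible, and it suffices to prove $Uf(x)\sim\frac{\Gamma(\gamma-\alpha+1)}{\Gamma(\gamma)}\int_{x}^{\infty}\frac{f(y)}{-y\,\psi(1/y)}\,\ddr y$.

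Next I would bring $Uf(x)$ into the form $\int_{[0,\infty)}f(x+z)\,U(\ddr z)$, where $U$ is the potential measure of $\xi$ started from $0$ and, by \eqref{eqn:k:1}, $V(w):=U([0,w])\sim\frac{1}{\Gamma(1+\alpha)}\cdot\frac{-1}{\psi(1/w)}\in\mathcal R_{\alpha}$. In the subordinator case this is \eqref{eqn:potentialsub} and Tonelli; in the non-subordinator case $Uf(x)=\int_{0}^{\infty}f(u)\kappa(u-x)\,\ddr u$ by \eqref{defn:Unonsub}, and splitting at $u=x$ the piece $\int_{0}^{x}f(u)\kappa(u-x)\,\ddr u=\phi'(0)e^{-px}\int_{0}^{x}f(u)e^{pu}\,\ddr u=O(f(x))$ (a Laplace-type estimate, using that $f$ is eventually decreasing and regularly varying) is of smaller order than $f(x)V(x)$ and may be discarded. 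By Lemma \ref{lem:f-} I replace $f$ by its decreasing differentiable equivalent $f^{\downarrow}$, for which $|(f^{\downarrow})'(y)|\sim\gamma f(y)/y\in\mathcal R_{-\gamma-1}$: for $x$ beyond the threshold where $f\sim f^{\downarrow}$, the integral $\int_{[0,\infty)}f(x+z)\,U(\ddr z)$ is squeezed between $(1\pm\varepsilon)\int_{[0,\infty)}f^{\downarrow}(x+z)\,U(\ddr z)$, so we may work with $f^{\downarrow}$. Writing $f^{\downarrow}(x+z)=\int_{z}^{\infty}|(f^{\downarrow})'(x+w)|\,\ddr w$ and applying Tonelli gives the clean identity $\int_{[0,\infty)}f^{\downarrow}(x+z)\,U(\ddr z)=\int_{0}^{\infty}|(f^{\downarrow})'(x+w)|\,V(w)\,\ddr w$.

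The endgame is a change of variables and Karamata's theorem. Substituting $w=xt$, the last integral equals $\gamma f(x)V(x)\int_{0}^{\infty}\frac{1}{1+t}\cdot\frac{f(x(1+t))}{f(x)}\cdot\frac{V(xt)}{V(x)}\,\ddr t\,(1+o(1))$; by the uniform convergence theorem and Potter's bounds the integrand converges pointwise to $(1+t)^{-\gamma-1}t^{\alpha}$ and admits an integrable majorant (integrability at $t=\infty$ again forces $\gamma>\alpha$, at $t=0$ the majorant is $\asymp t^{\alpha}$ with $\alpha\ge0$), so dominated convergence yields $Uf(x)\sim\gamma B(\alpha+1,\gamma-\alpha)f(x)V(x)$, which by $B(\alpha+1,\gamma-\alpha)=\frac{\Gamma(\alpha+1)\Gamma(\gamma-\alpha)}{\Gamma(\gamma+1)}$ and $V(x)\sim\frac{1}{\Gamma(1+\alpha)}\frac{-1}{\psi(1/x)}$ equals $\frac{\Gamma(\gamma-\alpha)}{\Gamma(\gamma)}\cdot\frac{-f(x)}{\psi(1/x)}$. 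On the other hand $y\mapsto\frac{f(y)}{-y\psi(1/y)}$ lies in $\mathcal R_{\alpha-\gamma-1}$, whose index is $<-1$, so \kara\ gives $\int_{x}^{\infty}\frac{f(y)}{-y\psi(1/y)}\,\ddr y\sim\frac{1}{\gamma-\alpha}\cdot\frac{-f(x)}{\psi(1/x)}$; multiplying by $\frac{\Gamma(\gamma-\alpha+1)}{\Gamma(\gamma)}$ reproduces exactly the asymptotics just obtained for $Uf(x)$, which together with the first step proves the lemma.

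The reduction in the first step is soft, resting only on downward creeping of spectrally positive L\'evy processes. The delicate part --- though routine in the theory of regular variation --- is the handling of $f$ in the last two steps: passing to a smooth, eventually monotone, regularly varying equivalent while controlling the derivative, justifying the Laplace-type estimate $\int_{0}^{x}f(u)e^{pu}\,\ddr u=O(f(x)e^{px})$ in the non-subordinator case, and, above all, exhibiting a single integrable majorant for the family of integrands as $x\to\infty$. One may avoid the $\int_{0}^{x}$-split by starting instead from $e^{px}U_{a}f(x)=\int_{[0,\infty)}\mathcal U(\ddr z)\int_{a}^{x}f(z+u)e^{pu}\,\ddr u$ (valid for any nonnegative $f$, exactly as in the derivation of \eqref{eqn:explode1-1}), $\mathcal U$ being the potential of the ascending ladder height subordinator with $\mathcal U([0,w])\sim p\,V(w)$; after the substitution $v=x-u$ the factor $e^{-pv}$ localises the inner integral near $f(x+z)/p$ uniformly in $z$, and the same computation applies.
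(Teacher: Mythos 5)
Your proposal is correct: the constants work out (the Beta-integral computation gives $\gamma B(\alpha+1,\gamma-\alpha)f(x)V(x)\sim\frac{\Gamma(\gamma-\alpha)}{\Gamma(\gamma)}\frac{-f(x)}{\psi(1/x)}$, which matches the paper's intermediate equivalence, and the final Karamata step is identical), but the execution differs from the paper's in two places. First, the paper never passes through the free potential: it plugs the explicit killed-resolvent density \eqref{eqn:resl1} into $U_af(x)$ to obtain \eqref{eqn:uf}, an exact identity with two explicit correction terms shown to be $O(f(x))$, whereas you reduce $U_af$ to $Uf$ probabilistically via downward creeping and the strong Markov property at $\tau_a^-$. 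This is legitimate, but note it requires extending $f$ below the level $a$ (the lemma, and its applications to $f_1,f_2$, only give $f$ on a neighbourhood of $\infty$); any bounded measurable extension works since the expected occupation of $(-\infty,a]$ is finite when $p>0$, but this should be said explicitly, and your finiteness check of $Uf(a)$ as written uses $f'$ and ``$f$ eventually decreasing'', neither of which is granted for a general $f\in\mathcal R_{-\gamma}$ --- both slips are repaired by invoking $f^{\downarrow}$ (or Fubini--Tonelli with the tail of $1/R$, as the paper does at the start of the Appendix) before, not after, these estimates, and the Laplace-type bound $\int_0^x f(u)e^{pu}\,\ddr u=O(f(x)e^{px})$ then follows from Potter's bounds or, as in the paper, from \kara\ applied to $f(\log\cdot)\in\mathcal R_0$. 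Second, for the key asymptotics of $\int_{[0,\infty)}f(x+z)\,U(\ddr z)$ the paper computes $\int_0^\infty (x+y)^{-s}\kappa(y)\,\ddr y$ via Fubini, $\hat\kappa(u)\sim-1/\psi(u)$ and \ktt, and then sandwiches a general $f$ between power functions using the UCT; you instead integrate by parts against $V=U([0,\cdot])\in\mathcal R_\alpha$ (via $f^{\downarrow}$), rescale $w=xt$, and use dominated convergence with Potter bounds to land on the Beta integral. Your route trades the Tauberian computation for a direct scaling/dominated-convergence argument (arguably more transparent, and it makes visible where $\gamma>\alpha$ is used), while the paper's route avoids any smoothing or monotone replacement of $f$ and any extension of $f$ below $a$. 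With the small points above made precise, your argument is a sound alternative proof.
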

\noindent The next lemma provides a renormalisation in law of the explosion time when the initial value of the process goes to $\infty$.
\begin{lem}\label{lem+:wk}
Suppose that \asp\ hold and $\beta>\alpha\ge0$. We have for any $a>0$,
\[
\frac{\eta(\infty)}{\varphi(x)}
\bigg|_{\P_{x}(\cdot |\tau_{a}^{-}=\infty)}
\Rightarrow \varrho_{\alpha,\beta}
\quad\text{as $x\to\infty$}
\]
where $\varrho_{\alpha,\beta}$ is the random variable whose distribution is characterised by \eqref{defn:rho}.
\end{lem}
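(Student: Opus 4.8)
The plan is to compute the limit of the moments of $\eta(\infty)/\varphi(x)$ under $\P_{x}(\cdot\mid\tau_{a}^{-}=\infty)$ and match them with the moments of $\varrho_{\alpha,\beta}$ given by \eqref{defn:rho}, then invoke the fact that the moment generating function in \eqref{defn:rho} has positive radius of convergence to conclude that the limiting moments determine the law and that moment convergence upgrades to weak convergence. First I would reduce the conditional expectation to an unconditional one: by \eqref{exitprobability}, $\P_{x}(\tau_{a}^{-}=\infty)=1-e^{-p(x-a)}\to 1$ as $x\to\infty$, and a splitting at $\tau_{a}^{-}$ exactly as in \eqref{uselessbutnicer}--\eqref{checkovershoot-nonextinction} (using the strong Markov property and the bound $\P_{x}(\tau_{a}^{-}<\infty)=e^{p(a-x)}$) shows that it suffices to prove $\E_{x}[(\eta(\infty))^{n}]\sim n!\,\varphi(x)^{n}\prod_{k=1}^{n}\frac{\Gamma(k(\beta-\alpha)+1)}{\Gamma(k(\beta-\alpha)+\alpha)}$ as $x\to\infty$, for each fixed $n\ge1$; here I am using $\eta(\infty)=T_{\infty}^{+}$ on $\{\tau_{0}^{-}=\infty\}$ (see \eqref{perpetualintegral}) and the fact that on $\{\tau_{a}^{-}<\infty\}$, $\eta(\tau_{a}^{-})$ contributes only lower-order terms because $R$ is bounded below on $[a,\infty)\cap(0,x]$ and $\P_{x}(\tau_{a}^{-}<\infty)$ is exponentially small.

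The heart of the argument is a recursion for the moments of the perpetual integral. Writing $\eta(\infty)=\int_{0}^{\infty}\frac{\ddr s}{R(\xi_{s})}$ and expanding the $n$-th power as $n!$ times the integral over the ordered simplex $0<s_{1}<\dots<s_{n}$, the Markov property gives
\[
\E_{x}\big[(\eta(\infty))^{n}\big]=n\,U_{0}\!\Big(\frac{1}{R}\cdot g_{n-1}\Big)(x),\qquad g_{n-1}(y):=\E_{y}\big[(\eta(\infty))^{n-1}\big],
\]
where $U_{0}$ is the potential operator of $\xi$ (here we may drop the killing at $a$ up to negligible terms, or work with $U_{a}$ throughout). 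Proceeding by induction on $n$: assuming $g_{n-1}(y)\sim c_{n-1}\varphi(y)^{n-1}$ with $\varphi\in\mathcal{R}_{\alpha-\beta}$ at $\infty$ (so $\varphi^{n-1}\in\mathcal{R}_{(n-1)(\alpha-\beta)}$), the integrand $\frac{1}{R(y)}g_{n-1}(y)$ is regularly varying of index $-\beta+(n-1)(\alpha-\beta)=-(\gamma_{n})$ with $\gamma_{n}=\beta+(n-1)(\beta-\alpha)>\alpha$, so Lemma \ref{lem+:uf} applies and yields
\[
U_{0}\!\Big(\frac{1}{R}g_{n-1}\Big)(x)\sim c_{n-1}\frac{\Gamma(\gamma_{n}-\alpha+1)}{\Gamma(\gamma_{n})}\int_{x}^{\infty}\frac{\varphi(y)^{n-1}}{-yR(y)\psi(1/y)}\ddr y.
\]
Since $\varphi'(y)=\frac{1}{yR(y)\psi(1/y)}$ by \eqref{defn:s}, the remaining integral is $\int_{x}^{\infty}\varphi(y)^{n-1}(-\varphi'(y))\ddr y=\varphi(x)^{n}/n$, giving the clean recursion $c_{n}=c_{n-1}\frac{\Gamma(n(\beta-\alpha)+1)}{\Gamma(n(\beta-\alpha)+\alpha)}$ after identifying $\gamma_{n}-\alpha=n(\beta-\alpha)$ and $\gamma_{n}=(n-1)(\beta-\alpha)+\beta$; unwinding from $c_{0}=1$ produces exactly $n!\prod_{k=1}^{n}\frac{\Gamma(k(\beta-\alpha)+1)}{\Gamma(k(\beta-\alpha)+\alpha)}$, matching $n!$ times the $n$-th moment coefficient in \eqref{defn:rho}. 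The base case $n=1$ is Lemma \ref{lem+:uf} with $f=1/R$ directly. For $\alpha=0$ one checks separately (or reads off from the $\alpha\to 0$ limit) that $c_{n}=n!\beta^{n}$, consistent with $\varrho_{0,\beta}$ being exponential with parameter $1/\beta$.

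The main obstacle I anticipate is twofold. First, controlling the induction hypothesis as an \emph{asymptotic equivalence} (not merely an order bound) requires Lemma \ref{lem+:uf} to be genuinely an equivalence, and one must be careful that the regular variation index $\gamma_{n}$ stays strictly above $\alpha$ at every step — this holds precisely because $\beta>\alpha$, which is exactly the hypothesis, and it is what makes the $\alpha=\beta$ case (Case \ref{case2}) genuinely different and out of scope here. Second, converting moment convergence to weak convergence: one must verify the limiting moment sequence $\{c_{n}/\varphi(x)^{n}\}$ does not grow too fast, i.e. that $\varrho_{\alpha,\beta}$ is determined by its moments; this follows from Proposition \ref{prop:rho}, since \eqref{eqn:rho4} shows $b_{n}/b_{n-1}\sim C n^{1-\alpha}$, whence $c_{n}^{1/n}=O(n^{1-\alpha})$ and the series $\sum c_{n}\theta^{n}/n!$ has positive radius of convergence $1/\beta$, so Carleman's condition (or the analyticity of the mgf near $0$) gives both uniqueness of the limit law and the implication ``convergence of all moments $\Rightarrow$ weak convergence''. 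Finally I would dispose of the conditioning: combining the moment asymptotics above with the splitting at $\tau_{a}^{-}$ and $\P_{x}(\tau_{a}^{-}=\infty)\to1$ shows the conditional moments have the same limits, completing the proof.
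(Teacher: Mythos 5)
Your strategy is essentially the paper's: prove $\E_x[\eta(\infty)^n;\tau_a^-=\infty]\sim c_n\varphi(x)^n$ by induction, using the moment recursion and Lemma \ref{lem+:uf} together with the identity $\int_x^\infty \varphi(y)^{n-1}\,\ddr(-\varphi(y))=\varphi(x)^n/n$, identify $c_n=\prod_{k=1}^n\frac{\Gamma(k(\beta-\alpha)+1)}{\Gamma(k(\beta-\alpha)+\alpha)}$ with the moments of $\varrho_{\alpha,\beta}$ from \eqref{defn:rho}, conclude by moment determinacy (Carleman, via $c_n/c_{n-1}\sim (n(\beta-\alpha))^{1-\alpha}$), and finally remove the conditioning using $\P_x(\tau_a^-=\infty)\to1$. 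This is exactly the proof given in the paper, which runs the recursion \eqref{recursion} for $m_n(x)=\E_x[\eta^n(\infty);\tau_a^-=\infty]$ with $m_0(x)=\P_x(\tau_a^-=\infty)$ and $f_1(y)=(1-e^{p(a-y)})/R(y)\sim 1/R(y)$.

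The one step of your plan that does not survive scrutiny is the proposed reduction to \emph{unconditional} moments $\E_x[\eta(\infty)^n]$ computed with $U_0$ (or the unkilled potential). The hypotheses control $R$ only at $\infty$ and on compact subsets of $(0,\infty)$, so $1/R$ may blow up arbitrarily fast near $0$; moreover, since $\xi$ is not assumed a subordinator it can reach $(-\infty,0]$, where $1/R(\xi_s)$ is not even defined, and on $\{\tau_0^-<\infty\}$ the identity \eqref{perpetualintegral} is unavailable. Concretely, in the bounded-variation case the density of $U_0(x,\ddr y)$ for $0<y<x$ equals $e^{-px}W(y)\to e^{-px}W(0)>0$ as $y\to0$ (see \eqref{eqn:resl}), so already $U_0(1/R)(x)=\infty$ whenever $1/R$ is not integrable at $0+$; the problematic contribution on $\{\tau_a^-<\infty\}$ is not $\eta(\tau_a^-)$, as you claim, but what is accumulated after $\tau_a^-$ during the passage through $(0,a)$, and it is not ``lower order'' in general. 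This is precisely why the paper never drops the killing at level $a$: your hedge ``or work with $U_a$ throughout'' is not optional, it is the proof. Two minor slips: the $n$-th moment of $\varrho_{\alpha,\beta}$ read off from \eqref{defn:rho} is $c_n$ (no extra $n!$), which is what your recursion actually produces, so your displayed target $\E_x[\eta(\infty)^n]\sim n!\,\varphi(x)^n\prod(\cdots)$ is off by a factor $n!$; and the radius of convergence of the series in \eqref{defn:rho} equals $1/\beta$ only for $\alpha=0$ (it is infinite for $\alpha>0$), though positivity of the radius is all you need for moment determinacy.
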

\begin{proof}
Recall the following recursive relationship  between the moments of $\eta(\infty)$; see e.g. \cite[Proposition 4.7]{LiZ2021}: for any $n\geq 1$ and $x>a>0$
\begin{equation}\label{recursion}
m_{n}(x):=\ \E_{x}\big[\eta^{n}(\infty); \tau_{a}^{-}=\infty\big]
=n \int_{a}^{\infty}\frac{m_{n-1}(y)}{R(y)}U_{a}(x,\ddr y),
\end{equation}
with $m_{0}(x):=\P_{x}(\tau_{a}^{-}=\infty)=1-e^{p(a-x)}$.
In particular, for every $x>a$ we have
\begin{align}
m_{1}(x):=&\ \E_{x}\big[\eta(\infty); \tau_{a}^{-}=\infty\big]
=\int_{0}^{\infty}f_{1}(y)U_{a}(x,\ddr y),\label{eqn:m1}\\
m_{2}(x):=&\ \E_{x}\big[\eta^{2}(\infty); \tau_{a}^{-}=\infty\big]
=2\int_{0}^{\infty}f_{2}(y)U_{a}(x,\ddr y),\label{eqn:m2}
\end{align}
where for $y>a$, we set
\[
f_{1}(y)=\frac{1-e^{p(a-y)}}{R(y)}\sim\frac{1}{R(y)}\in\mathcal{R}_{-\beta}
\quad\text{as $y\to\infty$ and}\quad
f_{2}(y)=\frac{m_{1}(y)}{R(y)}.
\]

By the assumption $\beta>\alpha$,
Lemma \ref{lem+:uf} can be applied directly to \eqref{eqn:m1} to show that
\[
m_{1}(x)
\sim
 \frac{\Gamma(\beta-\alpha+1)}{\Gamma(\beta)}\int_{x}^{\infty}\frac{-\ddr y}{yR(y)\psi(1/y)}
=: c_{1}\cdot \varphi(x)\in\mathcal{R}_{\alpha-\beta}\text{ at } \infty.
\]
Therefore, $f_{2}\in\mathcal{R}_{\alpha-2\beta}$ at $\infty$ and $2\beta-\alpha>\alpha$.
Applying Lemma \ref{lem+:uf} to \eqref{eqn:m2}, we have
\[\begin{aligned}
m_{2}(x)
\sim&\ \frac{\Gamma(2\beta-2\alpha+1)}{\Gamma(2\beta-\alpha)}
\int_{x}^{\infty}\frac{2\cdot m_{1}(y)\ddr y}{-yR(y)\psi(1/y)}
\sim \frac{\Gamma(2\beta-2\alpha+1)}{\Gamma(2\beta-\alpha)}
\int_{x}^{\infty}\frac{2c_{1}\cdot\varphi(y)\ddr y}{-yR(y)\psi(1/y)}\\
=&\ \frac{\Gamma(2\beta-2\alpha+1)}{\Gamma(2\beta-\alpha)}c_{1}\cdot\varphi^{2}(x)=: c_{2}\cdot\varphi^{2}(x)
\in\mathcal{R}_{2(\alpha-\beta)}\text{ at }\infty.
\end{aligned}\]
Similarly, by induction on $n$ one can also show by  \eqref{recursion} and Lemma \ref{lem+:uf} that for $ n\ge3$,
\[\begin{aligned}
m_{n}(x)
\sim&\ \frac{\Gamma(n(\beta-\alpha)+1)}{\Gamma(n(\beta-\alpha)+\alpha)}
\int_{x}^{\infty}\frac{n\cdot m_{n-1}(y)\ddr y}{-yR(y)\psi(1/y)}
\sim \frac{\Gamma(n(\beta-\alpha)+1)}{\Gamma(n(\beta-\alpha)+\alpha)}
\int_{x}^{\infty}\frac{n c_{n-1}\cdot \varphi^{n-1}(y)\ddr y}{-yR(y)\psi(1/y)}\\
=&\ \frac{\Gamma(n(\beta-\alpha)+1)}{\Gamma(n(\beta-\alpha)+\alpha)}\cdot c_{n-1}\cdot\varphi^{n}(x)=:c_{n}\cdot\varphi^{n}(x)
\in\mathcal{R}_{n(\alpha-\beta)}\text{ at }\infty.
\end{aligned}\]
Since $\P_{x}(\tau_{a}^{-}=\infty)\to1$ as $x$ goes to $\infty$, it follows that
\[
\E_{x}\Big[\Big(\frac{\eta(\infty)}{\varphi(x)}\Big)^{n}\Big| \tau_{a}^{-}=\infty\Big]
\underset{x\rightarrow \infty}{\rightarrow}\prod_{k=1}^{n}\frac{\Gamma(k(\beta-\alpha)+1)}{\Gamma(k(\beta-\alpha)+\alpha)}=:c_{n}
\quad\text{for every $n\in\mathbb{N}$.}
\]

If $\alpha=1$, then $c_{n}\equiv1$ and
\[
\frac{\eta(\infty)}{\varphi(x)}\bigg|_{\P_{x}(\cdot |\tau_{a}^{-}=\infty)}\Rightarrow 1\quad\text{in $L^{2}(\P)$ as $x\to\infty$}.
\]

If $\alpha=0$, then $c_{n}=(\beta-\alpha)^{n}n!=\beta^{n}n!$ and
\[
1+\sum_{n\ge1}\frac{c_{n}}{n!} s^{n}=\sum_{n\ge0}(\beta s)^{n}=\frac{1}{1-\beta s}\quad\text{for $|s \beta|<1$}.
\]

If $\alpha\in(0,1)$, then
\begin{equation}
\label{eqn:rho3}
\frac{c_{n}}{c_{n-1}}=\frac{\Gamma(n(\beta-\alpha)+1)}{\Gamma(n(\beta-\alpha)+\alpha)}\sim (n(\beta-\alpha))^{1-\alpha}\quad\text{as $n\to\infty$},
\end{equation}
see e.g. \cite[6.1.47]{Abramowitz1988}. One can check directly that the sequence $(c_{n})_{n\ge1}$ satisfies the Carleman condition on the moments. The latter determines thus uniquely a positive distribution. Then \cite[Theorem 30.2]{Billingsley2012} can be applied to show the limit.
\end{proof}

\noindent \textbf{Case $1\ge\beta=\alpha>0$:}\label{sec:b=a}
For this critical case, Lemma \ref{lem+:uf} cannot be applied as some integrals may fail to converge; see for example \eqref{eqn+:uf2}.
In the following, an analogue version is derived.
Recalling that for $f\in\mathcal{R}_{-\alpha}$ at $\infty$ and $U_{a}f(x)<\infty$, we have from  Corollary \ref{cor:1}
\[
\int^{\infty}\frac{-f(y)}{y\psi(1/y)}\ddr y<\infty.
\]

\begin{lem}\label{lem=:uf}
Suppose that $p\in (0,\infty]$, $-\psi\in\mathcal{R}_{\alpha}$ at $0$, $f\in\mathcal{R}_{-\alpha}$ at $\infty$ and \eqref{explosionconditionpsiR} hold. Then we have
\[
U_{a}f(x)\sim\frac{1}{\Gamma(\alpha)} \int_{x}^{\infty}\frac{-f(z)}{z\psi(1/z)} \ddr z\in\mathcal{R}_{0}\quad\text{as $x\to\infty$.}\]
\end{lem}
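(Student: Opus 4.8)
The plan is to mimic the proof of Lemma \ref{lem+:uf} but to replace the now-divergent Karamata estimate by an integration-by-parts argument that uses the integrability hypothesis $\int^{\infty}\frac{-f(y)}{y\psi(1/y)}\ddr y<\infty$. First I would make two routine reductions. Since $f\in\mathcal{R}_{-\alpha}$ with $\alpha>0$, Lemma \ref{lem:f-} lets me replace $f$ by the equivalent nonincreasing $C^{1}$ function $f^{\downarrow}$: the errors $U_{a}|f-f^{\downarrow}|(x)$ and the corresponding difference of target integrals are negligible (for the potential, split at a fixed level $y_{0}$, using $|f-f^{\downarrow}|\le\varepsilon f$ beyond $y_{0}$ and $U_{a}(x,(a,y_{0}))\to0$ as $x\to\infty$ since $\xi$ drifts to $+\infty$). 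So assume $f$ nonincreasing and $C^{1}$. Next I would reduce $U_{a}f(x)$ to the analytic quantity $\int_{[0,\infty)}f(x+z)\,m(\ddr z)$, where $m$ is a measure on $[0,\infty)$ with $M(z):=m([0,z])\sim\frac{-1}{\psi(1/z)\Gamma(1+\alpha)}\in\mathcal{R}_{\alpha}$ (cf. \eqref{eqn:k:1}). When $\xi$ is a subordinator this is exact with $m=U$. When $\xi$ is not a subordinator one cannot split \eqref{eqn:resl1} into two finite integrals, so I would start instead from the representation obtained as in \eqref{eqn:explode1-1}, $U_{a}f(x)=\int_{[0,\infty)}\mathcal{U}(\ddr z)\int_{0}^{x-a}e^{-pu}f(z+x-u)\,\ddr u$, and use an Abelian argument: the weight $e^{-pu}$ forces $u$ to be effectively $O(1)$, so by the uniform convergence theorem $\int_{0}^{x-a}e^{-pu}f(z+x-u)\,\ddr u\sim\tfrac1p f(z+x)$ uniformly in $z\ge0$, whence $U_{a}f(x)\sim\tfrac1p\int_{[0,\infty)}f(x+z)\,\mathcal{U}(\ddr z)$; one then sets $m:=\tfrac1p\mathcal{U}$ and notes $\hat m(s)=\frac{s-p}{p\psi(s)}\sim\frac{-1}{\psi(s)}$, so $M(z)\sim\frac{-1}{\psi(1/z)\Gamma(1+\alpha)}$ by \ktt.

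The core analytic claim is then: if $M\in\mathcal{R}_{\alpha}$ is nondecreasing with $M(z)\sim\frac{-1}{\psi(1/z)\Gamma(1+\alpha)}$ and $\int^{\infty}\frac{f(z)M(z)}{z}\,\ddr z<\infty$ (which is \eqref{explosionconditionpsiR} rewritten using $\frac{-1}{z\psi(1/z)}\sim\Gamma(1+\alpha)\frac{M(z)}{z}$), then $\int_{[0,\infty)}f(x+z)\,m(\ddr z)\sim\alpha\int_{x}^{\infty}\frac{f(z)M(z)}{z}\,\ddr z$, and the right-hand side is $(1+o(1))\tfrac1{\Gamma(\alpha)}\int_{x}^{\infty}\frac{-f(z)}{z\psi(1/z)}\,\ddr z$, which is exactly the assertion. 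To prove it I would integrate by parts, $\int_{[0,\infty)}f(x+z)\,m(\ddr z)=f(x)\,m(\{0\})+\int_{0}^{\infty}M(z)(-f'(x+z))\,\ddr z+\lim_{z\to\infty}f(x+z)M(z)$. The limit vanishes because $fM\in\mathcal{R}_{0}$ and $\int^{\infty}\frac{(fM)(z)}{z}\,\ddr z<\infty$ force $(fM)(z)\to0$; and $f(x)m(\{0\})=O(f(x))=o\big(\int_{x}^{\infty}\tfrac{fM}{z}\big)$ since $M(x)\to\infty$ (and $\int_{x}^{\infty}\tfrac{fM}{z}$ is slowly varying, dominating its own integrand $fM$). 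Substituting $w=x+z$ gives $\int_{[0,\infty)}f(x+z)\,m(\ddr z)\sim\int_{x}^{\infty}M(w-x)(-f'(w))\,\ddr w$. On the target side, writing $f(z)=\int_{z}^{\infty}(-f'(w))\,\ddr w$ and using Fubini, $\alpha\int_{x}^{\infty}\frac{f(z)M(z)}{z}\,\ddr z=\int_{x}^{\infty}\Big(\alpha\int_{x}^{w}\tfrac{M(z)}{z}\,\ddr z\Big)(-f'(w))\,\ddr w$. Thus everything reduces to comparing the two kernels $M(w-x)$ and $\alpha\int_{x}^{w}\tfrac{M(z)}{z}\,\ddr z$ against the measure $(-f'(w))\,\ddr w$ on $(x,\infty)$.

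For this I would split at $w=Kx$. By Karamata's theorem (\kara, with $\sigma=-1$, $\rho=\alpha$) one has $\alpha\int_{0}^{w}\tfrac{M(z)}{z}\,\ddr z\sim M(w)$, so for $w\ge Kx$, combining this with the uniform convergence theorem for $M\in\mathcal{R}_{\alpha}$ to compare $M(w-x)$, $M(w)$ and $M(Kx)$, one gets $\limsup_{x\to\infty}\sup_{w\ge Kx}\big|\frac{\alpha\int_{x}^{w}M(z)z^{-1}\ddr z}{M(w-x)}-1\big|=:\delta(K)\to0$ as $K\to\infty$; hence the $(Kx,\infty)$-contributions to the two integrals are asymptotically equal. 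On $(x,Kx)$, both kernels are bounded by a $K$-dependent multiple of $M(Kx)\asymp M(x)$ (using monotonicity of $M$ and $\int_{x}^{Kx}\tfrac{\ddr z}{z}=\log K$) and $\int_{x}^{Kx}(-f'(w))\,\ddr w\le f(x)$, so both contributions are $O_{K}(f(x)M(x))=o\big(\int_{x}^{\infty}\tfrac{fM}{z}\big)$ by the boundary estimate above; thus they are negligible for each fixed $K$. Letting $K\to\infty$ finishes the proof. The hard part will be the uniform comparison of $M(w-x)$ with $\alpha\int_{x}^{w}M(z)z^{-1}\,\ddr z$ over $w\ge Kx$ as $x\to\infty$ — one must run the uniform convergence theorem and Karamata's estimate jointly in the two variables — together with the somewhat delicate Abelian reduction in the non-subordinator case, where one has to verify that the $e^{-pu}$-smoothing and the $O(f(x))$ corrections do not affect the leading-order asymptotics.
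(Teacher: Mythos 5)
Your argument is correct in substance, but it takes a genuinely different route through the key step than the paper does. The paper starts from the representation \eqref{eqn:uf}, notes that the remainder estimate \eqref{eqn+:uf1} still gives $U_af(x)=\int_0^\infty f(x+y)\kappa(y)\,\ddr y+O(f(x))$, and then proves the shift-invariance $\int_0^\infty f(x+y)\kappa(y)\,\ddr y\sim\int_x^\infty f(y)\kappa(y)\,\ddr y$ directly, by splitting the difference at $bx$ into the two terms $I_1,I_2$ of the appendix, bounding $I_1$ through $\sup_{y>bx}|f(x+y)/f(y)-1|$ and $I_2$ through the UCT and the estimate $f(x)\int_0^{bx}\kappa=o(\cdot)$ (which is \eqref{eqn=:uf3}), and letting $b\to\infty$; the conversion to the target integral is then \eqref{eqn=:uf2}--\eqref{eqn=:uf3} via Karamata and Fubini, exactly as in your last step. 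You instead monotonize $f$ via Lemma \ref{lem:f-}, integrate by parts against $-f'(w)\,\ddr w$, and compare the kernels $M(w-x)$ and $\alpha\int_x^w M(z)z^{-1}\ddr z$ with a split at $Kx$; this never differentiates the potential measure and so treats the subordinator and non-subordinator cases on the same footing, at the price of the monotonization step and a two-variable uniform Karamata/UCT comparison. Note that the paper's $I_1/I_2$ argument also works verbatim for measures (it only uses $\kappa(y)\,\ddr y$ as a measure, and the appendix handles $p=\infty$ by replacing $\kappa(y)\,\ddr y$ with $U(\ddr y)$), and it needs neither monotonicity nor differentiability of $f$, so it is the lighter of the two routes; what your version buys is a single abstract statement about $\int f(x+z)\,m(\ddr z)$ for any measure with $M\in\mathcal{R}_\alpha$.

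Two loose ends in your write-up are repairable but should be fixed. First, in the non-subordinator reduction the claim that $\int_0^{x-a}e^{-pu}f(z+x-u)\,\ddr u\sim\tfrac1p f(z+x)$ \emph{uniformly in} $z\ge0$ does not follow from ``$e^{-pu}$ forces $u=O(1)$'' plus the UCT alone: for $u$ near $x-a$ the integrand is $f$ evaluated near $z+a$, and $f(z+a)/f(z+x)$ is of polynomial order in $x$ when $z\lesssim x$. You need the monotonized $f$ and a two-regime bound (cut at $u=\sqrt{x}$, then treat $z\le x$ and $z\ge x$ separately) so that the tail contribution is $O\big(x^{\alpha+1}e^{-p\sqrt{x}}\big)f(z+x)$ uniformly; alternatively you can bypass the Abelian step entirely by working from \eqref{eqn:uf}, as the paper does, where the extra terms are explicitly $O(f(x))$ and hence negligible by \eqref{eqn=:uf3}. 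Second, in your integration by parts the term $f(x)m(\{0\})$ is already contained in $\int_0^\infty M(z)(-f'(x+z))\,\ddr z$ when $M(z):=m([0,z])$ includes the atom; this double count is harmless since the term is $O(f(x))=o\big(\int_x^\infty f(z)M(z)z^{-1}\ddr z\big)$ (the borderline case of Proposition \ref{prop:kara}, i.e. the analogue of \eqref{eqn=:uf3}), but the formula should be stated consistently. Also, Lemma \ref{lem:f-} gives an absolutely continuous, not necessarily $C^1$, decreasing version of $f$; absolute continuity is all your integration by parts needs.
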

\noindent The proof of Lemma \ref{lem=:uf} is deferred to Appendix. We use it to obtain the following weak convergence of $T_{\infty}^{+}$ for $X$ starting from large values similarly to Lemma \ref{lem+:wk}.

\begin{lem}\label{lem+:wk2}
Suppose that \asp\ holds for $\beta=\alpha>0$ and \eqref{explosionconditionpsiR} holds. Then, for any $a>0$, we have
\[
\frac{\eta(\infty)}{\varphi(x)}
\bigg|_{\P_{x}(\cdot |\tau_{a}^{-}=\infty )}
\Rightarrow \frac{1}{\Gamma(\alpha)}
\quad\text{in $L^{2}(\P)$ as $x\to\infty$.}
\]
\end{lem}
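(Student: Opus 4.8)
The plan is to reproduce the method of moments argument used in the proof of Lemma \ref{lem+:wk}, but now feeding the recursion \eqref{recursion} with Lemma \ref{lem=:uf} in place of Lemma \ref{lem+:uf}. First I would recall $\varphi(x)=\int_x^\infty \frac{-\ddr y}{yR(y)\psi(1/y)}$, which is finite under \eqref{explosionconditionpsiR} and slowly varying at $\infty$ (since $\beta=\alpha$ makes $\frac{-1}{yR(y)\psi(1/y)}$ regularly varying of index $-1$, so its tail integral is in $\mathcal R_0$). Starting from $m_0(x)=\P_x(\tau_a^-=\infty)=1-e^{p(a-x)}$, I would compute $m_1(x)=\int_0^\infty f_1(y)U_a(x,\ddr y)$ with $f_1(y)=\frac{1-e^{p(a-y)}}{R(y)}\sim \frac1{R(y)}\in\mathcal R_{-\alpha}$. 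Since \eqref{explosionconditionpsiR} guarantees $\int^\infty \frac{-f_1(y)}{y\psi(1/y)}\ddr y<\infty$, Lemma \ref{lem=:uf} applies and yields
\[
m_1(x)\sim \frac{1}{\Gamma(\alpha)}\int_x^\infty \frac{-\ddr y}{yR(y)\psi(1/y)}=\frac{1}{\Gamma(\alpha)}\,\varphi(x)\in\mathcal R_0 .
\]

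Next I would run the induction. Suppose $m_{n-1}(x)\sim c_{n-1}\varphi^{n-1}(x)$ with $\varphi^{n-1}\in\mathcal R_0$, where $c_{n-1}=\Gamma(\alpha)^{-(n-1)}/(n-1)!$ (equivalently $c_{n-1}=\frac{1}{\Gamma(\alpha)^{n-1}(n-1)!}$, to be confirmed by the recursion). Then $f_n(y):=\frac{m_{n-1}(y)}{R(y)}\sim \frac{c_{n-1}\varphi^{n-1}(y)}{R(y)}\in\mathcal R_{-\alpha}$, again slowly varying times $1/R$, and the associated test integral $\int^\infty \frac{-f_n(y)}{y\psi(1/y)}\ddr y=\int^\infty \frac{c_{n-1}\varphi^{n-1}(y)\ddr y}{-yR(y)\psi(1/y)}$ is finite because its integrand is, up to the slowly varying factor $\varphi^{n-1}$, the integrand defining $\varphi$, and one checks $\int_x^\infty \varphi^{n-1}(y)\,(-\varphi'(y))\,\ddr y=\tfrac1n\varphi^n(x)<\infty$. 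So Lemma \ref{lem=:uf} applies once more and gives
\[
m_n(x)=n\int_0^\infty f_n(y)U_a(x,\ddr y)\sim \frac{n}{\Gamma(\alpha)}\int_x^\infty \frac{c_{n-1}\varphi^{n-1}(y)\ddr y}{-yR(y)\psi(1/y)}=\frac{c_{n-1}}{\Gamma(\alpha)}\,\varphi^n(x),
\]
so $c_n=\frac{c_{n-1}}{\Gamma(\alpha)}$, hence $c_n=\frac{1}{\Gamma(\alpha)^n n!}$ after accounting for the factor $n$ from the recursion and the factor $\frac1n$ from $\int_x^\infty \varphi^{n-1}\,(-\varphi')$. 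Since $\P_x(\tau_a^-=\infty)\to1$, dividing by $\varphi^n(x)$ gives $\E_x\big[(\eta(\infty)/\varphi(x))^n\mid \tau_a^-=\infty\big]\to \frac1{n!}\,\Gamma(\alpha)^{-n}$, i.e. the $n$-th moment of the constant $1/\Gamma(\alpha)$ — in particular for $n=1,2$ this is convergence of mean to $1/\Gamma(\alpha)$ and of second moment to $1/\Gamma(\alpha)^2$, so the variance vanishes and we get $L^2(\P)$ convergence of $\eta(\infty)/\varphi(x)$ to the constant $1/\Gamma(\alpha)$ under $\P_x(\cdot\mid\tau_a^-=\infty)$, which is the claim.

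The main obstacle I anticipate is bookkeeping the constants and, more substantively, verifying at each induction step that the hypotheses of Lemma \ref{lem=:uf} genuinely hold: one must confirm that $f_n\in\mathcal R_{-\alpha}$ (slowly varying times $1/R\in\mathcal R_{-\beta}=\mathcal R_{-\alpha}$) and, crucially, that the finiteness condition \eqref{explosionconditionpsiR} is inherited at every level, i.e. $U_af_n(x)<\infty$; this reduces to the elementary fact that $\int_x^\infty \varphi(y)^{n-1}(-\ddr\varphi(y))=\varphi(x)^n/n<\infty$, which needs $\varphi(x)<\infty$, guaranteed by \eqref{explosionconditionpsiR}. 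A secondary point is that since the limit is a constant there is no moment-problem subtlety (no Carleman condition needed); in fact only the first two moments are required for the $L^2$ statement, and the higher moments $c_n$ are a bonus confirming the limit is $1/\Gamma(\alpha)$ a.s.
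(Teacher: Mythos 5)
Your proposal follows essentially the same route as the paper: feed the moment recursion \eqref{recursion} with Lemma \ref{lem=:uf}, get $m_1(x)\sim\varphi(x)/\Gamma(\alpha)$ and $m_2(x)\sim\varphi^2(x)/\Gamma(\alpha)^2$, and conclude $L^2$ convergence to the constant $1/\Gamma(\alpha)$ since $\P_x(\tau_a^-=\infty)\to1$; the paper stops at $n=2$, exactly as you note is sufficient. One bookkeeping slip in the optional higher-moment part: your own recursion $c_n=c_{n-1}/\Gamma(\alpha)$ (the factor $n$ from \eqref{recursion} cancels the $1/n$ from $\int_x^\infty\varphi^{n-1}(-\ddr\varphi)$) gives $c_n=\Gamma(\alpha)^{-n}$, not $\Gamma(\alpha)^{-n}/n!$, and indeed $\Gamma(\alpha)^{-n}$ is the $n$-th moment of the constant $1/\Gamma(\alpha)$ (the sequence $\Gamma(\alpha)^{-n}/n!$ is not even a valid moment sequence); since your stated limits for $n=1,2$ are $1/\Gamma(\alpha)$ and $1/\Gamma(\alpha)^2$, the $L^2$ argument itself is unaffected.
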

\begin{proof}
By applying Lemma \ref{lem=:uf} to \eqref{eqn:m1}, we find that
\[
m_{1}(x)=U_{a}f_{1}(x)
\sim \frac{1}{\Gamma(\alpha)}\int_{x}^{\infty}\frac{-\ddr y}{zR(z)\psi(1/z)}
=\frac{\varphi(x)}{\Gamma(\alpha)}.
\]
The fact $m_{1}(\infty)=0$ implies $U_{a}f_{2}(x)<\infty$ in \eqref{eqn:m2}.
Further applying Lemma \ref{lem=:uf} gives
\[
m_{2}(x)=2U_{a}f_{2}(x)
\sim \frac{2}{\Gamma(\alpha)}\int_{x}^{\infty}\frac{-m_{1}(z)\ddr z}{zR(z)\psi(1/z)}
\sim\frac{-2}{\Gamma^{2}(\alpha)}\int_{x}^{\infty}\varphi'(z)\varphi(z)\ddr z
=\frac{\varphi^{2}(x)}{\Gamma^{2}(\alpha)}
\]
which proves the assertion.
\end{proof}

We are now ready to proceed with the study of the speeds and  prove  Theorem \ref{thm:speed} for which the next lemma is crucial.
\begin{lem}[Lemma 2.1 in \cite{LiZ2021}]\label{lem:equivsupremum}
Assume that $p>0$ and $\P_{1}(T_{\infty}^{+}<\infty)>0$.
Set $\bar{X}(T_\infty^{+}-t):=\sup\{X(s), s\le T^{+}_\infty-t\}$ for any $t\in [0,T^{+}_\infty]$. One has on the event of explosion
\begin{equation}\label{equivsupremum}
\bar{X}(T_\infty^{+}-t)\underset{t\rightarrow 0}{\sim} X(T_\infty^{+}-t)
\quad\text{under $\P_{1}(\cdot|T_{a}^{-}<\infty)$ a.s..}
\end{equation}
\end{lem}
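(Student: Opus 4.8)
**Proof plan for Lemma \ref{lem:equivsupremum} (asymptotic equivalence of the process and its running supremum near explosion).**

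The plan is to transfer the statement to the parent L\'evy process $\xi$ via the time-change \eqref{defn:X} and then to use the fact that the asymptotic overshoots of $\xi$ at high levels stay of the same order as the level itself. Concretely, on the non-extinction event $\{\tau_0^-=\infty\}$ one has $T_\infty^+=\eta(\infty)$ by \eqref{perpetualintegral}, and for $s<\eta(\infty)$ one has $X(s)=\xi(\eta^{-1}(s))$ and $\bar X(s)=\bar\xi(\eta^{-1}(s))$, where $\bar\xi$ is the running supremum. So \eqref{equivsupremum} is equivalent to showing that, almost surely on explosion, $\bar\xi(t)\sim\xi(t)$ as $t\uparrow\infty$, i.e. $\xi(t)/\bar\xi(t)\to1$. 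First I would record that under $\P_1(\cdot\mid T_a^-<\infty)$, once one conditions further on explosion, the process $X$ eventually stays above any fixed level and never comes back down to $0$; on the $\xi$-side this means $\xi$ drifts to $+\infty$ (which it does by hypothesis $p>0$), and $\xi(t)\to\infty$ a.s., hence $\bar\xi(t)\to\infty$ a.s.

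The core step is the almost-sure statement $\xi(t)/\bar\xi(t)\to1$ as $t\to\infty$ for a spectrally positive L\'evy process drifting to $+\infty$. I would argue as follows. Fix the sequence of first-passage times $\tau_n^+:=\tau_{x_n}^+$ for a geometric grid $x_n=\lambda^n$, $\lambda>1$; these are finite a.s. and increase to $\infty$. For $t\in[\tau_n^+,\tau_{n+1}^+)$ one has the deterministic sandwich
\[
\frac{\xi(t)}{\bar\xi(t)}\;\ge\;\frac{\inf_{s\in[\tau_n^+,\tau_{n+1}^+)}\xi(s)}{\bar\xi(\tau_{n+1}^+)}\;\ge\;\frac{\xi(\tau_n^+)-\big(\bar\xi(\tau_{n+1}^+)-\xi(\tau_n^+)\big)^-\cdots}{\xi(\tau_{n+1}^+)},
\]
so it suffices to control two things: the overshoot $\xi(\tau_n^+)-x_n$ and the downward fluctuation of $\xi$ between consecutive passage times. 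For the overshoot, since $-\psi$ is regularly varying one could invoke Proposition \ref{prop:overshoot}, but that only gives convergence in law; for an almost-sure statement it is cleaner to use Doney–Maller (cited just before Proposition \ref{prop:overshoot}) which gives $\xi(\tau_x^+)/x\to1$ a.s. when the ascending ladder height subordinator has zero drift-to-infinity defect of the appropriate type — and in any case one only needs that $\xi(\tau_n^+)/x_n$ is bounded along the subsequence, which follows from the strong law for the ladder process $H$ applied to $\xi(\tau_x^+)=H(\sigma_x^+)$ together with $\sigma_x^+/x\to 1/\Phi'(\infty)$-type estimates. For the downward excursion below a running maximum, one uses that $\bar\xi-\xi$ is a (strong Markov) reflected process whose excursions away from $0$ have, by the drift to $+\infty$, almost surely finite length, and one bounds $\bar\xi(\tau_{n+1}^+)-\inf_{[\tau_n^+,\tau_{n+1}^+]}\xi$ by the maximal depth of the excursion straddling the interval, which is $o(x_n)$ a.s. by a Borel–Cantelli argument using the potential/excursion measure estimates from Section \ref{sec:preliminaries}.

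Putting these together: for $t\in[\tau_n^+,\tau_{n+1}^+)$,
\[
1\;\ge\;\frac{\xi(t)}{\bar\xi(t)}\;\ge\;\frac{x_n-o(x_n)}{\lambda x_n(1+o(1))}\;\xrightarrow[n\to\infty]{}\;\frac1\lambda,
\]
and since $\lambda>1$ is arbitrary one obtains $\xi(t)/\bar\xi(t)\to1$ a.s. Transporting back through $\eta^{-1}$ gives \eqref{equivsupremum} under $\P_1(\cdot\mid T_a^-<\infty)$ on the event of explosion; on the complementary (non-explosion) part of $\{T_a^-<\infty\}$ the statement \eqref{equivsupremum} is vacuous since one restricts to the explosion event anyway. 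The main obstacle is the second ingredient — showing the downward fluctuations between successive high-level crossings are negligible compared to the level — because the generic spectrally positive L\'evy process drifting to $+\infty$ can still make arbitrarily deep (though increasingly rare) downward excursions; the quantitative control of their depth along the geometric grid, via the scale-function/potential estimates and Borel–Cantelli, is where the real work lies. (Since this lemma is quoted verbatim from \cite[Lemma 2.1]{LiZ2021}, one may alternatively simply cite that reference, the argument there being exactly of this type.)
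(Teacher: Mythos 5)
Your overall route coincides with the paper's: the entire proof given there is (i) to quote \cite[Lemma 2.1]{LiZ2021} for the fact that $\xi(t)/\bar{\xi}(t)\to 1$ as $t\to\infty$ $\P$-a.s.\ whenever $p>0$ (with the remark that the finite-mean hypothesis of that reference is not used for this particular fact), and (ii) to transfer the statement through the time change \eqref{defn:X}, exactly as in your first paragraph. So your closing fallback of simply citing \cite[Lemma 2.1]{LiZ2021} is precisely what the paper does, and in that sense the proposal is correct.

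Concerning the self-contained sketch you add for the key almost-sure fact: the overshoot step does not work in the setting this lemma is used in. Doney--Maller relative stability, $\xi(\tau_{x}^{+})/x\to1$ a.s., needs the ascending ladder height to have finite mean (or a comparable condition), whereas here Proposition \ref{prop:overshoot} shows that $\xi(\tau_{x}^{+})/x$ converges in law to the nondegenerate $\chi_{\alpha}$ when $\alpha<1$; moreover $H$ has infinite mean (its exponent $\Phi(s)=\psi(s)/(s-p)$ is regularly varying of index $\alpha<1$ at $0$), so there is no strong law for $H$ to invoke, and the ratio is not a.s.\ bounded along a geometric grid. Hence your denominator $\bar{\xi}(\tau_{n+1}^{+})$ is not $\lambda x_{n}(1+o(1))$. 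The repair is immediate and makes the overshoot irrelevant: for $t<\tau_{n+1}^{+}$ one has the deterministic bound $\bar{\xi}(t)\le x_{n+1}=\lambda x_{n}$, since $\xi$ has not yet exceeded $x_{n+1}$. For the downward fluctuations you do not need excursion theory either: by \eqref{exitprobability}, from any position above $x_{n}$ the probability of ever dropping below $x_{n}-y$ is at most $e^{-py}$, so taking $y=\varepsilon x_{n}$ with $x_{n}=\lambda^{n}$ and applying Borel--Cantelli gives that the drop below $x_{n}$ after $\tau_{x_{n}}^{+}$ is $o(x_{n})$ a.s. Then $\xi(t)/\bar{\xi}(t)\ge (x_{n}-o(x_{n}))/(\lambda x_{n})$ on $[\tau_{x_n}^{+},\tau_{x_{n+1}}^{+})$, and letting $\lambda\downarrow1$ yields the claim; with this correction your sketch becomes a valid direct proof of the quoted fact.
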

\begin{proof}
Recall that $\bar{\xi}$ denotes the running supremum process of  $\xi$. It has been established\footnote{the finite mean condition is not used for this result} in \cite[Lemma 2.1]{LiZ2021} that when $p>0$, we have
\[
\xi(t)\big/\bar{\xi}(t)\to1\text{ as $t\to\infty$ $\P$-a.s.}.
\]
The equivalence \eqref{equivsupremum} follows plainly by the time-changing relationship between $X$ and $\xi$, see \eqref{defn:X}.
\end{proof}
The statements in Theorem \ref{thm:speed} are thus equivalent if one replaces $X$ by the running supremum. We shall therefore work with $\bar{X}$. We treat the cases \ref{case1}, \ref{case3} and \ref{case2} separately in the subsections below.

\subsubsection{Case \ref{case1}: $\beta>\alpha>0$}
	
\begin{proof}[\bf Proof of Case \ref{case1} in Theorem \ref{thm:speed}]
\begin{lem}
\begin{equation}\label{eqninlem:spd:1-1}
\frac{T_{\infty}^{+}-T_{x}^{+}}{\varphi(x)}\bigg|_{\P_{1}(\cdot|T_{\infty}^{+}<\infty)}
\underset{x\rightarrow \infty}{\Longrightarrow}
\begin{cases} &\chi_{\alpha}^{\alpha-\beta}\times\varrho_{\alpha,\beta} \text{ if } \alpha<1\\
&1\text{ if } \alpha=1.
\end{cases}
\end{equation}
\end{lem}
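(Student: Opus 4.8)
The plan is to restart $X$ at the (random) level $Y_{x}:=X(T_{x}^{+})=\xi(\tau_{x}^{+})$ reached at $T_{x}^{+}$, and to combine the asymptotic overshoot of the parent process (Proposition \ref{prop:overshoot}) with the renormalisation of the explosion time from a large starting point, whose moments were obtained in the proof of Lemma \ref{lem+:wk}. I would first establish the convergence under $\P_{1}(\cdot\mid\tau_{a}^{-}=\infty)$ for a fixed $a\in(0,1)$ and then let $a\downarrow0$. Note that on $\{\tau_{a}^{-}=\infty\}$ the parent process stays above $a$, so $T_{x}^{+}=\eta(\tau_{x}^{+})<\infty$ for every $x>a$, $X$ explodes a.s.\ with $T_{\infty}^{+}=\eta(\infty)$ by Theorem \ref{thmexplosiong1}, and $Y_{x}\to\infty$ a.s.\ as $x\to\infty$.

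Writing $Z_{x}:=(T_{\infty}^{+}-T_{x}^{+})/\varphi(x)$ and applying the strong Markov property of $X$ at $T_{x}^{+}$, one gets for every $n\ge1$
\[
\E_{1}\big[Z_{x}^{\,n};\,\tau_{a}^{-}=\infty\big]
=\E_{1}\Big[\mathbf{1}(\tau_{a}^{-}>\tau_{x}^{+})\Big(\frac{\varphi(Y_{x})}{\varphi(x)}\Big)^{n}A_{n}(Y_{x})\Big],
\qquad
A_{n}(y):=\E_{y}\big[(\eta(\infty)/\varphi(y))^{n};\,\tau_{a}^{-}=\infty\big],
\]
since, on $\{\tau_{a}^{-}>\tau_{x}^{+}\}$, the process after $T_{x}^{+}$ is a copy of $X$ started at $Y_{x}$ whose explosion time equals $\eta(\infty)$ on the event that it never goes below $a$. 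The moment estimates from the proof of Lemma \ref{lem+:wk} give $A_{n}(y)\to c_{n}:=\prod_{k=1}^{n}\frac{\Gamma(k(\beta-\alpha)+1)}{\Gamma(k(\beta-\alpha)+\alpha)}$ as $y\to\infty$, with $c_{n}$ independent of $a$, so $A_{n}$ is bounded on $[y_{0},\infty)$; and since $\varphi\in\mathcal{R}_{\alpha-\beta}$ is decreasing with $Y_{x}\ge x$, one has $\varphi(Y_{x})/\varphi(x)\in(0,1]$, so the integrand is uniformly bounded. As $\{\tau_{a}^{-}>\tau_{x}^{+}\}$ decreases to $\{\tau_{a}^{-}=\infty\}$ when $x\to\infty$, replacing $\mathbf{1}(\tau_{a}^{-}>\tau_{x}^{+})$ by $\mathbf{1}(\tau_{a}^{-}=\infty)$ costs only $O\big(\P_{1}(\tau_{a}^{-}>\tau_{x}^{+})-\P_{1}(\tau_{a}^{-}=\infty)\big)=o(1)$.

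Conditionally on $\{\tau_{a}^{-}=\infty\}$ we then have $A_{n}(Y_{x})\to c_{n}$ a.s.\ (as $Y_{x}\to\infty$), while $Y_{x}/x\Rightarrow\chi_{\alpha}$ by Proposition \ref{prop:overshoot}; since $\varphi\in\mathcal{R}_{\alpha-\beta}$ at $\infty$, the uniform convergence theorem applied to $\lambda\mapsto\varphi(\lambda x)/\varphi(x)$ on compact subsets of $[1,\infty)$ together with the tightness of $(Y_{x}/x)$ gives $\varphi(Y_{x})/\varphi(x)\Rightarrow\chi_{\alpha}^{\alpha-\beta}\in(0,1]$. By Slutsky's theorem and dominated convergence,
\[
\E_{1}\big[Z_{x}^{\,n}\,\big|\,\tau_{a}^{-}=\infty\big]
\underset{x\to\infty}{\longrightarrow}c_{n}\,\E\big[\chi_{\alpha}^{\,n(\alpha-\beta)}\big]
=\E\big[\chi_{\alpha}^{\,n(\alpha-\beta)}\big]\,\E\big[\varrho_{\alpha,\beta}^{\,n}\big]
=\E\big[(\chi_{\alpha}^{\alpha-\beta}\varrho_{\alpha,\beta})^{n}\big],
\]
using \eqref{defn:rho} and the independence of $\chi_{\alpha}$ and $\varrho_{\alpha,\beta}$. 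As $\chi_{\alpha}^{\alpha-\beta}\in(0,1]$, these moments are dominated by $c_{n}$, which satisfies the Carleman condition (see \eqref{eqn:rho3}); hence the limit law is moment-determinate and $Z_{x}\Rightarrow\chi_{\alpha}^{\alpha-\beta}\varrho_{\alpha,\beta}$ under $\P_{1}(\cdot\mid\tau_{a}^{-}=\infty)$. Finally, since $X$ cannot explode on $\{\tau_{0}^{-}<\infty\}$ and $\P_{1}(\inf_{t}\xi_{t}=0)=0$, Theorem \ref{thmexplosiong1} yields $\{\tau_{a}^{-}=\infty\}\subseteq\{T_{\infty}^{+}<\infty\}=\{\tau_{0}^{-}=\infty\}$ up to $\P_{1}$-null sets and $\P_{1}(\tau_{a}^{-}=\infty)\uparrow\P_{1}(T_{\infty}^{+}<\infty)$ as $a\downarrow0$; testing against bounded continuous functions and letting $a\downarrow0$ then transfers the convergence to $\P_{1}(\cdot\mid T_{\infty}^{+}<\infty)$, which is \eqref{eqninlem:spd:1-1}. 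When $\alpha=1$ one has $c_{n}\equiv1$ and $\chi_{1}=1$, so the limit is $1$.

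The main difficulty is restarting $X$ at the \emph{random} level $Y_{x}$: Proposition \ref{prop:overshoot} and Lemma \ref{lem+:wk} describe the overshoot and the renormalised explosion time from a \emph{deterministic} starting point, so to plug in $Y_{x}$ one needs the uniformity coming from (i) the convergence $A_{n}(y)\to c_{n}$ as $y\to\infty$, ensuring $A_{n}(Y_{x})\to c_{n}$ whenever $Y_{x}\ge x\to\infty$, (ii) the uniform boundedness of $A_{n}$ and of $\varphi(Y_{x})/\varphi(x)\in(0,1]$, which makes Slutsky and dominated convergence applicable after conditioning, and (iii) the uniform convergence theorem for regularly varying functions, used to push the overshoot limit through $\varphi$. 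The alignment of the conditioning events relies on the monotone time change, which gives $\{T_{a}^{-}>T_{x}^{+}\}=\{\tau_{a}^{-}>\tau_{x}^{+}\}\downarrow\{\tau_{a}^{-}=\infty\}$; the passage $a\downarrow0$ at the end is routine.
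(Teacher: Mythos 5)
Your proposal is correct and follows essentially the same route as the paper: decompose $T_{\infty}^{+}-T_{x}^{+}$ at the passage time via the Markov property at $\tau_{x}^{+}$, combine the conditional overshoot limit of Proposition \ref{prop:overshoot} with the renormalised explosion time started from the (large) overshoot position given by Lemma \ref{lem+:wk}, and finally let $a\downarrow 0$ using $\P_{1}(\tau_{a}^{-}<\infty\mid\tau_{0}^{-}=\infty)\to 0$. The only deviation is technical rather than structural: you conclude the joint limit by the method of moments, using the asymptotics $m_{n}(y)\sim c_{n}\varphi(y)^{n}$ from the proof of Lemma \ref{lem+:wk} together with a Carleman/moment-determinacy argument (valid since $\E\big[(\chi_{\alpha}^{\alpha-\beta}\varrho_{\alpha,\beta})^{n}\big]\le c_{n}$), whereas the paper argues directly with bounded test functions and the function $g(z)=\P_{z}\big(\eta(\infty)/\varphi(z)>t\mid\tau_{a}^{-}=\infty\big)$, which does not change the substance of the argument.
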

\begin{proof}
We first carry out the computation  under $\P_{1}(\cdot|T_{a}^{-}=\infty)$, i.e. conditioning on level $a$ not being reached. The result under  $\P_{1}(\cdot|T_{\infty}^{+}<\infty)$ will be obtained by letting $a\to 0+$ in a second time.
Recalling the equivalences \eqref{eqn:explode1-3}. Under the assumptions of Theorem \ref{thm:speed},
for any $a\in(0,1)$ the following four events coincide a.s. under $\mathbb{P}_1$, 
\[
\{T_{a}^{-}=\infty,T_{\infty}^{+}<\infty\}
=\{\tau_{a}^{-}=\infty,\eta(\infty)<\infty\}
=\{\tau_{a}^{-}=\infty\}
=\{T_{a}^{-}=\infty\}.
\]
Set $g(z):=\mathbb{P}_z(\eta(\infty)/\varphi(z)>t|\tau_{a}^{-}=\infty)$. We know by Lemma \ref{lem+:wk} that $g(z)\rightarrow \mathbb{P}(\varrho_{\alpha,\beta}>t)$ as $z$ goes to $\infty$ for any $t>0$. Recall Proposition \ref{prop:overshoot} and the fact that it holds under $\mathbb{P}_{1}(\cdot|\tau_{a}^{-}=\infty)$ for every $x>0$. Applying the Markov property at time $\tau_{x}^{+}$ under $\mathbb{P}_{1}(\cdot|\tau_{a}^{-}=\infty)$ and recalling that $\varphi$ is regularly varying at $\infty$ with index $\alpha-\beta$, we have
\begin{align*}
\E_{1}\bigg[f\Big(\frac{\varphi(\xi(\tau_{x}^{+}))}{\varphi(x)}\Big); \frac{\eta(\infty)\circ \theta_{\tau_{x}^{+}}}{\varphi(\xi(\tau_{x}^{+}))}>t\bigg|\tau_{a}^{-}=\infty\bigg]
=&\ \E_{1}\bigg[f\big(\frac{\varphi(\xi(\tau_{x}^{+}))}{\varphi(x)}\big)\cdot g(\xi(\tau_{x}^{+}))\bigg|\tau_{a}^{-}=\infty\bigg]\\
&\underset{x\rightarrow \infty}{\longrightarrow} \mathbb{E}[f(\chi^{\alpha-\beta}_\alpha)]\cdot\P(\varrho_{\alpha,\beta}>t),
\end{align*}
for every continuous and bounded function $f$,
where $\theta_.$ denotes the shift operator.
We thus get
\[
\frac{T_{\infty}^{+}-T_{x}^{+}}{\varphi(x)}\bigg|_{\P_{1}(\cdot |T_{a}^{-}=\infty )}
=\frac{\varphi\big(\xi(\tau_{x}^{+})\big)}{\varphi(x)}\cdot
\frac{\eta(\infty)\circ\theta_{\tau_{x}^{+}}}{\varphi(\xi(\tau_{x}^{+}))}
\bigg|_{\P_{1}(\cdot |\tau_{a}^{-}=\infty )}
\underset{x\rightarrow \infty}{\Longrightarrow}
\chi_{\alpha}^{\alpha-\beta}\times\varrho_{\alpha,\beta}
\]
where $\chi_\alpha$ and $\varrho_{\alpha,\beta}$ are independent.

We now show that the convergence holds true for the process conditioned to explode. Recall that $\{T_\infty^{+}<\infty\}=\{\tau_0^{-}=\infty\}$, $\mathbb{P}_1$-a.s.. one has
\[\begin{aligned}
&\ \E_{1}\Big[f\Big(\frac{T_{\infty}^{+}-T_{x}^{+}}{\varphi(x)}\Big)\Big|T_{\infty}^{+}<\infty\Big]\\
=&\ \E_{1}\Big[f\Big(\frac{T_{\infty}^{+}-T_{x}^{+}}{\varphi(x)}\Big)\Big|T_{a}^{-}=\infty\Big]
\cdot\P_{1}\big(\tau_{a}^{-}=\infty\big|\tau_{0}^{-}=\infty\big)\
 + c_{1}\cdot\P_{1}\big(\tau_{a}^{-}<\infty\big|\tau_{0}^{-}=\infty\big)\\
=&\ \E_{1}\Big[f\Big(\frac{T_{\infty}^{+}-T_{x}^{+}}{\varphi(x)}\Big)\Big|T_{a}^{-}=\infty\Big]+ c_{2}\cdot \P_{1}\big(\tau_{a}^{-}<\infty\big|\tau_{0}^{-}=\infty\big)
\end{aligned}\]
where $|c_{1}|\le ||f||_{\infty}, |c_{2}|\le 2||f||_{\infty}$ and we make use of the fact that $\{\tau_{a}^{-}=\infty\}\subset\{\tau_{0}^{-}=\infty\}$.
Noticing that the limit of the first term above is independent of $a>0$ and the second term above is uniformly bounded by
\[
\P_{1}\big(\tau_{a}^{-}<\infty\big|\tau_{0}^{-}=\infty\big)
=\frac{e^{p(a-1)}(1-e^{-pa})}{1-e^{-p}}
=\frac{e^{p(a-1)}-e^{-p}}{1-e^{-p}}
\]
which tends to $0$ as $a$ goes to $0$, one thus have
\begin{equation}\label{eqn:spd:1-1}
\frac{T_{\infty}^{+}-T_{x}^{+}}{\varphi(x)}\bigg|_{\P_{1}(\cdot|T_{\infty}^{+}<\infty)}
\underset{x\rightarrow \infty}{\Longrightarrow}
\chi_{\alpha}^{\alpha-\beta}\times\varrho_{\alpha,\beta}.
\end{equation}
For the case $\alpha=1$, the limit in \eqref{eqn:spd:1-1} degenerates with $\chi_{\alpha}=\varrho_{\alpha,\beta}=1$.
\end{proof}
We now invert the residual time to explosion in order to study the supremum of the process.
\begin{lem}
One has as $t\to 0+$,
\begin{equation*}
\label{lemlinearspeed1}\frac{\bar{X}(T_{\infty}^{+}-t)}{\varphi^{-1}(t)}\bigg|_{\P_{1}(\cdot|T_{\infty}^{+}<\infty)}
\Rightarrow \begin{cases} &\frac{1}{\chi_{\alpha}}\times\varrho_{\alpha,\beta}^{\frac{1}{\beta-\alpha}}\text{ if } \alpha<1,\\
&1 \text{ if } \alpha=1.
\end{cases}
\end{equation*}
\end{lem}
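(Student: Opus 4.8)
The plan is to deduce the lemma from the renormalisation \eqref{eqn:spd:1-1} of the residual explosion time by a deterministic inversion. Throughout, all probabilities are taken under $\P_{1}(\cdot\,|\,T_{\infty}^{+}<\infty)$, and we recall that on this event $\{T_{\infty}^{+}<\infty\}=\{\tau_{0}^{-}=\infty\}$ a.s. Since $-\psi\in\mathcal{R}_{\alpha}$ at $0$ forces $\psi(1/y)<0$ for large $y$, the integrand in \eqref{defn:s} is strictly positive; hence $\varphi$ is continuous and strictly decreasing on $(1/p,\infty)$ with $\varphi(\infty)=0$, so that $\varphi^{-1}$ is a genuine decreasing inverse, $\varphi^{-1}(t)\to\infty$ as $t\to0+$, and $\varphi\in\mathcal{R}_{\alpha-\beta}$ at $\infty$ with $\alpha-\beta<0$.

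First I would record the pathwise inclusions relating the running supremum of $X$ to the first passage times of $X$: directly from the definitions of $T_{x}^{+}$ and of $\bar{X}$, for any $x>1$ and any $t\in(0,T_{\infty}^{+})$,
\[
\{\bar{X}(T_{\infty}^{+}-t)<x\}\ \subseteq\ \{T_{\infty}^{+}-T_{x}^{+}<t\}\ \subseteq\ \{\bar{X}(T_{\infty}^{+}-t)\le x\}.
\]
Fix $z>0$ and plug in the level $x=x(t):=z\,\varphi^{-1}(t)$, so that $\varphi^{-1}(t)=x/z$ and $t=\varphi(x/z)$; writing $Z_{x}:=(T_{\infty}^{+}-T_{x}^{+})/\varphi(x)$, the middle event becomes $\{Z_{x}<\varphi(x/z)/\varphi(x)\}$. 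Thus $\{\bar{X}(T_{\infty}^{+}-t)/\varphi^{-1}(t)\le z\}$ is sandwiched between $\{Z_{x}<\varphi(x/z)/\varphi(x)\}$ and its enlargement obtained by replacing $<$ by $\le$; as $t\to0+$ one has $x(t)\to\infty$, and by regular variation of $\varphi$ the deterministic threshold satisfies $\varphi(x/z)/\varphi(x)\to(1/z)^{\alpha-\beta}=z^{\beta-\alpha}$.

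Next I would pass to the limit. By the preceding lemma \eqref{eqn:spd:1-1}, $Z_{x}\Rightarrow\chi_{\alpha}^{\alpha-\beta}\varrho_{\alpha,\beta}$ as $x\to\infty$ (for $\alpha\in(0,1)$, with $\chi_{\alpha}$ and $\varrho_{\alpha,\beta}$ independent), and $Z_{x}\Rightarrow1$ for $\alpha=1$; specialising this to the continuum $x=x(t)\to\infty$ and combining with $\varphi(x/z)/\varphi(x)\to z^{\beta-\alpha}$, a converging–together (Slutsky-type) argument — applied to both ends of the sandwich, with $z$ replaced by $z'\downarrow z$ to absorb the two boundary events — gives
\[
\P_{1}\Big(\tfrac{\bar{X}(T_{\infty}^{+}-t)}{\varphi^{-1}(t)}\le z\ \Big|\ T_{\infty}^{+}<\infty\Big)\ \underset{t\to0+}{\longrightarrow}\ \P\big(\chi_{\alpha}^{\alpha-\beta}\varrho_{\alpha,\beta}<z^{\beta-\alpha}\big)\qquad(\alpha<1),
\]
and the analogous statement with right-hand side $\P(1<z^{\beta-\alpha})$ when $\alpha=1$. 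For this to be legitimate I need each $z^{\beta-\alpha}$, $z>0$, to be a continuity point of the limiting law; when $\alpha<1$ this holds because $\chi_{\alpha}$ has a bounded density on $[1,\infty)$ by \eqref{defn:chi} while $\varrho_{\alpha,\beta}>0$ a.s.\ (it has the finite moments \eqref{defn:rho}), so the product $\chi_{\alpha}^{\alpha-\beta}\varrho_{\alpha,\beta}$ is absolutely continuous; when $\alpha=1$ the limit is degenerate and only continuity points $z\ne1$ are needed. Finally, since $\beta-\alpha>0$,
\[
\{\chi_{\alpha}^{\alpha-\beta}\varrho_{\alpha,\beta}<z^{\beta-\alpha}\}=\{\varrho_{\alpha,\beta}<(z\chi_{\alpha})^{\beta-\alpha}\}=\Big\{\tfrac{1}{\chi_{\alpha}}\,\varrho_{\alpha,\beta}^{1/(\beta-\alpha)}<z\Big\},
\]
which identifies the limit in law as $\chi_{\alpha}^{-1}\varrho_{\alpha,\beta}^{1/(\beta-\alpha)}$ for $\alpha<1$ and as the constant $1$ for $\alpha=1$ (consistent with $\chi_{1}=\varrho_{1,\beta}=1$); since the limiting cdf is continuous, pointwise cdf convergence upgrades to weak convergence.

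The step I expect to be the real work is the simultaneous limit in the third paragraph: the first-passage level $x=x(t)$, the conditioning, and the deterministic threshold $\varphi(x/z)/\varphi(x)$ all move together as $t\to0+$. The pathwise sandwich of the second paragraph, together with the absolute continuity of the limit law (which makes the two boundary events asymptotically negligible), is exactly what reduces this to a routine converging-together statement, so most of the effort goes into establishing those two ingredients cleanly. A minor point, already absorbed into the inclusions, is that $X$ may creep upward when $\xi$ has a Gaussian part.
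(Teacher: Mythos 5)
Your proposal is correct and follows essentially the same route as the paper: the pathwise sandwich between $\{\bar{X}(T_{\infty}^{+}-t)\lessgtr x\}$ and $\{T_{\infty}^{+}-T_{x}^{+}\lessgtr t\}$ with $x=z\varphi^{-1}(t)$, regular variation of $\varphi$ to identify the threshold $z^{\beta-\alpha}$, the weak limit \eqref{eqn:spd:1-1}, and continuity of the limit law (the paper treats $\alpha=1$ by an explicit $\varepsilon$-argument, which is the same idea as your continuity-point remark). One tiny inaccuracy: the density of $\chi_{\alpha}$ is in fact unbounded near $1$, but only absolute continuity (no atoms in $(0,\infty)$) is needed, so your argument stands.
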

\begin{proof}
For all small $t>0$ and large $x>0$,
\begin{equation}
\label{eqn:spd:1-3}
\{\bar{X}(T^{+}_{\infty}-t)>x\}
\subset\{T_{\infty}^{+}-T_{x}^{+}\ge t\}
\subset\{\bar{X}(T^{+}_{\infty}-t)\ge x\}.
\end{equation}
For $\lambda>0$, taking $x=\lambda\varphi^{-1}(t)$, and applying \eqref{eqn:spd:1-1} together with the fact that $\varphi\in\mathcal{R}_{\alpha-\beta}$,
we have
\[\begin{aligned}
\P_{1}\big(T_{\infty}^{+}-T_{x}^{+}\ge t\big|T_{\infty}^{+}<\infty\big)
=&\ \P_{1}\Big(\frac{T_{\infty}^{+}-T_{x}^{+}}{\varphi(x)}\ge \frac{t}{\varphi(x)}=\frac{\varphi(x/\lambda)}{\varphi(x)}\Big|T_{\infty}^{+}<\infty\Big)\\
\to&\ \P\big(\chi_{\alpha}^{\alpha-\beta}\times\varrho_{\alpha,\beta}\ge \lambda^{\beta-\alpha}\big)
\end{aligned}\]
as $t\to0+$ (consequently $y\to\infty$).
The result follows by substituting the limit above into \eqref{eqn:spd:1-3}, and the continuity of the limit distribution.

For the case $\alpha=1$,
\begin{equation}
\label{eqn:spd:1-2}
\frac{T_{\infty}^{+}-T_{x}^{+}}{\varphi(x)}\bigg|_{\P_{1}(\cdot |T_{\infty}^{+}<\infty)}\Rightarrow 1\quad\text{as $x\to\infty$}.
\end{equation}
For arbitrary $\varepsilon>0$, taking $x=(1+\varepsilon)\varphi^{-1}(t)$ in \eqref{eqn:spd:1-3}, we have
\[\begin{aligned}
&\ \limsup_{t\to0+}\P_{1}\big(\bar{X}\big(T_{\infty}^{+}-t\big)>(1+\varepsilon)\varphi^{-1}(t)\big|T_{\infty}^{+}<\infty\big)\\
\le &\ \limsup_{t\to0+}\P_{1}\bigg(\frac{T_{\infty}^{+}-T_{x}^{+}}{\varphi(x)}\ge \frac{t}{\varphi(x)}=\frac{\varphi(\varphi^{-1}(t))}{\varphi((1+\varepsilon)\varphi^{-1}(t))}\bigg|T_{\infty}^{+}<\infty\bigg)\\
\le &\ \limsup_{x\to\infty}\P_{1}\bigg(\frac{T_{\infty}^{+}-T_{x}^{+}}{\varphi(x)}\ge (1+\varepsilon)^{\frac{\beta-\alpha}{2}}\bigg|T_{\infty}^{+}<\infty\bigg)=0
\end{aligned}\]
by \eqref{eqn:spd:1-2} and the fact $\varphi\in\mathcal{R}_{\alpha-\beta}$. Similarly, taking $x=(1-\varepsilon)\varphi^{-1}(t)$ in \eqref{eqn:spd:1-3}, we have by \eqref{eqn:spd:1-2},
\[\begin{aligned}
&\ \liminf_{t\to0+}\P_{1}\big(\bar{X}\big(T_{\infty}^{+}-t\big)\ge(1-\varepsilon)\varphi^{-1}(t)\big|T_{\infty}^{+}<\infty\big)\\
\ge &\ \liminf_{t\to0+}\P_{1}\bigg(\frac{T_{\infty}^{+}-T_{x}^{+}}{\varphi(x)}\ge \frac{t}{\varphi(x)}=\frac{\varphi(\varphi^{-1}(t))}{\varphi((1-\varepsilon)\varphi^{-1}(t))}\bigg|T_{\infty}^{+}<\infty\bigg)\\
\ge &\ \liminf_{x\to\infty}\P_{1}\bigg(\frac{T_{\infty}^{+}-T_{x}^{+}}{\varphi(x)}\ge (1-\varepsilon)^{2(\beta-\alpha)}\bigg|T_{\infty}^{+}<\infty\bigg)=1,
\end{aligned}\]
 which proves the critical case $\alpha=1$. This finishes the proof.
\end{proof}
The proof of Theorem \ref{thm:speed}, Case \ref{case1}: $\beta>\alpha>0$ follows by combining the above Lemma with Lemma \ref{lem:equivsupremum}.
\end{proof}

\subsubsection{Case \ref{case2}: $1\ge\beta=\alpha>0$}\label{sec:b=a}
\begin{proof}[\bf Proof of Case \ref{case2} in Theorem \ref{thm:speed}]
Applying the Markov property at $\tau_{x}^{+}$, we have
\[
\frac{T_{\infty}^{+}-T_{x}^{+}}{\varphi(x)}\bigg|_{\P_{1}(\cdot |T_{a}^{-}=\infty )}
=\frac{\varphi\big(\xi(\tau_{x}^{+})\big)}{\varphi(x)}\cdot
\frac{\eta(\infty)\circ\theta_{\tau_{x}^{+}}}{\varphi(\xi(\tau_{x}^{+}))}
\bigg|_{\P_{1}(\cdot |\tau_{a}^{-}=\infty )}
\Rightarrow 1,
\]
where we use the fact that in this case $\varphi\in\mathcal{R}_{0}$ at $\infty$ and $y^{-1}\xi(\tau_{x}^{+})$ has a finite weak limit by Proposition \ref{prop:overshoot}.
The limit above then follows from Lemma \ref{lem+:wk2}.

The scaling limits under $\P_{1}(\cdot|T_{\infty}^{+}<\infty)$ and that for $X(T_{\infty}^{+}-t)$ are obtained similarly as in the proof of case $\alpha=1<\beta$ in \eqref{eqn:spd:1-2}, and details are therefore omitted.
\end{proof}

\subsubsection{Case \ref{case3}: $\beta>\alpha=0$ and proof of Proposition \ref{cor:a=0}}\label{sec=}
The arguments for $\beta>\alpha=0$ are different. In this case, $\chi_{\alpha}=\infty$ by Proposition \ref{prop:overshoot}
and $\varrho_{\alpha,\beta}$ has a standard exponential law; see Section \ref{sec:intro}. Therefore, the limiting  law in \eqref{eqn:spd:1-1} is degenerate at $0$ and the proof of the case $\alpha>0$ is not applicable.

We start by establishing the following lemma. Recall that $\Pi$ is the L\'evy measure of the ladder height process $H$ defined in \eqref{defn:ladder} and $\nu$ is that of $\xi$; see \eqref{defn:psi}.
\begin{lem}\label{lem:alpha=0} Assume that  $-\psi \in \mathcal{R}_0$ at $0$.
For every $x,s\to\infty$ such that either $\bar{\Pi}(s)/\bar{\Pi}(x)\to \lambda\in(0,1)$ for $0<p<\infty$ or  $\bar{\nu}(s)/\bar{\nu}(x)\to \lambda\in(0,1)$ for $p=\infty$, we have
\begin{equation}
\label{eqn:spd:2-2}
\P\big(\xi(\tau_{x}^{+})>s\big)\underset{x,s\to\infty}{\longrightarrow} \lambda.
\end{equation}
\end{lem}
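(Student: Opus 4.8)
The plan is to reduce the statement to the joint law of the undershoot and overshoot of a subordinator at a first passage time, and then to combine two ingredients: the fact that $-\psi\in\mathcal{R}_{0}$ at $0$ renders the relevant jump-measure tail slowly varying at $\infty$ (which will force $s/x\to\infty$ once $\lambda<1$), and a Dynkin--Lamperti type identity obtained from Karamata's Tauberian theorem (Proposition \ref{prop:ktt}) together with the monotone density theorem.

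First I would treat the non-subordinator case $0<p<\infty$. As in the proof of Proposition \ref{prop:overshoot}, $\xi(\tau_{x}^{+})$ has the same law as $H(\sigma_{x}^{+})$, where $H$ is the ladder height subordinator with Laplace exponent $\Phi(s)=\psi(s)/(s-p)$, potential measure $\mathcal{U}$, L\'evy measure $\Pi$ and drift $\sigma^{2}/2$; see \eqref{eqn:prop2:2}. By the overshoot formula \eqref{eqn:prop2:4}, for $s>x$ the creeping term does not contribute and $\P(\xi(\tau_{x}^{+})>s)=\int_{[0,x)}\mathcal{U}(\ddr u)\,\bar{\Pi}(s-u)$. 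Then I would establish two asymptotics. From \eqref{eqn:prop2:2}, $\int_{0}^{\infty}e^{-sz}\bar{\Pi}(z)\,\ddr z=\Phi(s)/s-\sigma^{2}/2\sim -\psi(s)/(ps)\in\mathcal{R}_{-1}$ at $0$ (using $\psi(0)=0$ and $-\psi(s)/s\to\infty$, valid since $-\psi\in\mathcal{R}_{0}$ at $0$), so Proposition \ref{prop:ktt} gives $\int_{0}^{x}\bar{\Pi}(z)\,\ddr z\sim x(-\psi(1/x))/p$ and the monotone density theorem then gives $\bar{\Pi}(x)\sim -\psi(1/x)/p\in\mathcal{R}_{0}$ at $\infty$. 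On the other hand $1/\Phi(s)=(s-p)/\psi(s)\in\mathcal{R}_{0}$ at $0$ with $1/\Phi(s)\to\infty$ as $s\to0+$, so Proposition \ref{prop:ktt} applied to $\hat{\mathcal{U}}(s)=1/\Phi(s)$ yields $\mathcal{U}([0,x])\sim 1/\Phi(1/x)\sim p/(-\psi(1/x))$; squeezing $\mathcal{U}([0,x/2])\le\mathcal{U}([0,x))\le\mathcal{U}([0,x])$ and using slow variation of $1/\Phi$ at $0$ gives the same equivalent for $\mathcal{U}([0,x))$. Multiplying the two, one obtains the identity $\bar{\Pi}(x)\,\mathcal{U}([0,x))\to 1$ as $x\to\infty$.

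The crucial observation is then that, since $\bar{\Pi}$ is non-increasing and slowly varying while $\bar{\Pi}(s)/\bar{\Pi}(x)\to\lambda<1$, one must have $s/x\to\infty$ (otherwise, along a subsequence with $s\le Mx$, $\bar{\Pi}(s)/\bar{\Pi}(x)\ge\bar{\Pi}(Mx)/\bar{\Pi}(x)\to 1$, contradicting $\lambda<1$). Hence $(s-x)/s\to 1$, so the uniform convergence theorem gives $\bar{\Pi}(s-x)/\bar{\Pi}(s)\to 1$, while monotonicity gives $\bar{\Pi}(s)\le\bar{\Pi}(s-u)\le\bar{\Pi}(s-x)$ for all $u\in[0,x)$. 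Dividing the resulting bounds $\bar{\Pi}(s)\,\mathcal{U}([0,x))\le\P(\xi(\tau_{x}^{+})>s)\le\bar{\Pi}(s-x)\,\mathcal{U}([0,x))$ by $\bar{\Pi}(x)\,\mathcal{U}([0,x))\to 1$, the left side tends to $\lambda$ and the right side tends to $(\lim\bar{\Pi}(s-x)/\bar{\Pi}(s))\cdot\lambda=\lambda$, so $\P(\xi(\tau_{x}^{+})>s)\to\lambda$. The subordinator case $p=\infty$ follows from the very same argument once $(\Phi,\mathcal{U},\Pi,\sigma^{2}/2)$ are replaced by their analogues for $\xi$ itself, namely $(-\psi,U,\nu,\delta)$, as indicated after \eqref{eqn:prop2:4}: here $\bar{\nu}(x)\sim -\psi(1/x)$ by \eqref{eqn:nu} with $\alpha=0$ and $U([0,x])\sim -1/\psi(1/x)$ by Proposition \ref{prop:ktt} applied to $\hat{U}(s)=-1/\psi(s)$ (cf.\ \eqref{eqn:k:1}), whence $\bar{\nu}(x)\,U([0,x))\to 1$ and the same sandwich closes the proof. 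I expect the main obstacle to be precisely the uniformity over $u\in[0,x)$ of the replacement $\bar{\Pi}(s-u)\sim\bar{\Pi}(s)$: this is exactly where the hypothesis $\lambda<1$ is used, since it forces $s/x\to\infty$ and then monotonicity together with slow variation make the replacement uniform. Were $\lambda$ allowed to equal $1$, then $s$ and $x$ could remain comparable, no such uniform control would be available, and the conclusion itself would fail.
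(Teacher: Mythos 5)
Your proof is correct, and it follows the paper's overall strategy---reduce to the ladder height subordinator $H$ via \eqref{identitiesovershoot}, use the first-passage formula \eqref{eqn:prop2:4}, and derive $\bar{\Pi}(x)\sim-\psi(1/x)/p\in\mathcal{R}_{0}$ exactly as in \eqref{eqn:spd:2-3}---but it handles the undershoot integral by a genuinely different device. The paper restricts to the event $\{\bar{\xi}(\tau_{x}^{+}-)\le\delta x\}$, invokes the Dynkin--Lamperti type result \cite[Theorem III.6]{Bertoin96book} (recorded as \eqref{eqn:spd:2-1}) to discard the complement, and applies the UCT to get $\bar{\Pi}(s-xu)/\bar{\Pi}(x(1-u))\to\lambda$ uniformly for $u\in[0,\delta]$. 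You instead add the Tauberian estimate $\mathcal{U}([0,x])\sim 1/\Phi(1/x)\sim p/(-\psi(1/x))$, so that $\bar{\Pi}(x)\,\mathcal{U}([0,x))\to1$, and then sandwich the full integral $\int_{[0,x]}\mathcal{U}(\ddr u)\,\bar{\Pi}(s-u)$ between $\bar{\Pi}(s)\,\mathcal{U}([0,x))$ and $\bar{\Pi}(s-x)\,\mathcal{U}([0,x])$, using monotonicity of $\bar{\Pi}$, the forced divergence $s/x\to\infty$ (your subsequence argument from $\lambda<1$ is exactly the point where slow variation is used in the paper as well), and slow variation to identify both bounds with $\lambda$. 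What your route buys is a more self-contained argument: no appeal to the undershoot limit theorem and no $\delta$-splitting, at the cost of one extra but routine Tauberian step for $\mathcal{U}$, which is anyway the ingredient underlying the cited Dynkin--Lamperti statement; what the paper's route buys is that the renewal estimate for $\mathcal{U}$ never needs to be made explicit. Two cosmetic points: the overshoot identity should integrate $u$ over $[0,x]$ rather than $[0,x)$ (the creeping atom at $x$ indeed does not contribute since $s>x$), which is harmless since $\mathcal{U}([0,x))$ and $\mathcal{U}([0,x])$ share the same asymptotics; and your substitution $(\Phi,\mathcal{U},\Pi,\sigma^{2}/2)\mapsto(-\psi,U,\nu,\delta)$ for the subordinator case is exactly the paper's.
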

\begin{proof}
If $\xi$ with $0<p<\infty$ is not a non-subordinator, by applying \cite[Theorem III.6]{Bertoin96book} to the ladder height process and \eqref{eqn:prop2:4}, when $-\psi\in\mathcal{R}_{0}$ at $0$, we have
\begin{gather}
\label{eqn:spd:2-1}
\P\big(\bar{\xi}(\tau_{x}^{+}-)>\delta y\big)\underset{y\rightarrow \infty}{\to} 0,
\quad\forall \delta\in(0,1)\\
\label{eqn:spd:2-4}
\text{ and } \P\big(\xi(\tau_{x}^{+})>s,\bar{\xi}\big(\tau_{x}^{+}-\big)\le\delta x\big)
= \int_{0}^{\delta}\mathcal{U}(x \ddr u)\bar{\Pi}\big(x(1-u)\big)\cdot \frac{\bar{\Pi}(s-xu)}{\bar{\Pi}(x(1-u))},
\quad\forall s>x.
\end{gather}
Applying \ktt\ to \eqref{eqn:prop2:2}, we get that
\begin{equation}\label{eqn:spd:2-3}
\hat{\bar{\Pi}}(\theta)\sim \frac{\psi(\theta)}{-p\theta}\in\mathcal{R}_{-1}
\Rightarrow
\int_{0}^{x}\bar{\Pi}(y)\ddr y\sim\frac{x\psi(1/x)}{-p}\in\mathcal{R}_{1}
\Rightarrow
\bar{\Pi}(x)\sim\frac{\psi(1/x)}{-p}\in\mathcal{R}_{0}
\end{equation}
as $\theta\to0+$ and $x\to\infty$, respectively, where the monotone density theorem is applied in the last identity (recall that $\bar{\Pi}$ defined in \eqref{eqn:prop2:2} is continuous and strictly decreasing on $(0,\infty)$).

For every $\lambda\in(0,1)$, let $s,x$ satisfy \eqref{eqn:spd:2-2}.
Since $\bar{\Pi}\in\mathcal{R}_{0}$, we have $s/x\to\infty$ and
\[
\frac{\bar{\Pi}(s-xu)}{\bar{\Pi}(x(1-u))}=\frac{\bar{\Pi}(s-xu)}{\bar{\Pi}(s)}\frac{\bar{\Pi}(x)}{\bar{\Pi}(x(1-u))}\frac{\bar{\Pi}(s)}{\bar{\Pi}(x)}\to\lambda
\quad\text{as $x\to\infty$},
\]
where the limit above holds uniformly for $u\in[0,\delta]$ by UCT.
Using \eqref{eqn:spd:2-4} and the above convergence gives
\[\begin{aligned}
& \left\lvert \P\big(\xi(\tau_{x}^{+})>s,\bar{\xi}(\tau_{x}^{+}-)\le \delta x\big)-\lambda\cdot\P\big(\bar{\xi}(\tau_{x}^{+}-)\le \delta x\big) \right\lvert\\
& \qquad \le \ \int_{0}^{\delta}\mathcal{U}(x \ddr u)\bar{\Pi}\big(x(1-u)\big)\cdot \Big|\frac{\bar{\Pi}(s-xu)}{\bar{\Pi}(x(1-u))}-\lambda\Big|\to0,
\end{aligned}\]
which further gives \eqref{eqn:spd:2-2} by applying \eqref{eqn:spd:2-1}.

The subordinator case ($p=\infty$) follows from the same arguments by working directly with $\xi$ and its L\'evy measure $\nu$ instead of $H$ and $\Pi$. We omit the details.
\end{proof}

We now proceed with the study of the speed of explosion.
\begin{proof}[\bf Proof of Case \ref{case3} in Theorem \ref{thm:speed}]
We are going to apply Lemma \ref{lem:alpha=0} with $x$ and $s$ chosen as certain functions of $t$, going to $\infty$, as $t\to0+$.
Applying the Markov property at $\tau_{x}^{+}$ to the process $\xi$ gives
\begin{equation}
\label{eqn:spd:2-5}
\begin{aligned}
&\ \P_{1}\big(T_{\infty}^{+}-T_{x}^{+}<t\big|T_{a}^{-}=\infty\big)
=\P_{1}\big(\eta(\infty)\circ\theta_{\tau_{x}^{+}}<t\big|\tau_{a}^{-}=\infty\big)\\
=&\ \int_{x}^{\infty}\P_{1}\big(\xi(\tau_{x}^{+})\in\,\ddr z\big|\tau_{a}^{-}=\infty\big)
\P_{z}\big(\eta(\infty)<t\big|\tau_{a}^{-}=\infty\big)\\
=&\ \int_{x}^{\infty}\P_{1}\big(\xi(\tau_{x}^{+})\in\,\ddr z\big|\tau_{a}^{-}=\infty\big)\P_{z}\Big(z>\varphi^{-1}\Big(t\Big/\frac{\eta(\infty)}{\varphi(z)}\Big)\Big|\tau_{a}^{-}=\infty\Big).
\end{aligned}
\end{equation}
Since $\varrho_{\alpha,\beta}$ for $\alpha=0$ is exponentially distributed, for arbitrary $\varepsilon>0$, there are $0<m<M<\infty$ so that for every $x$ large enough
\[
\P_{z}\Big(\frac{\eta(\infty)}{\varphi(z)}\in[m,M]\Big|\tau_{a}^{-}=\infty\Big)>1-\varepsilon,\quad\forall z>y.
\]
Plugging the above into \eqref{eqn:spd:2-5} and using the monotonicity of $\varphi^{-1}$, one can check that
\begin{equation}\label{eqn:spd:2-6}
\begin{aligned}
\P_{1}\big(T_{\infty}^{+}-T_{x}^{+}<t\big|T_{a}^{-}=\infty\big)
\le&\ \varepsilon+ \P_{1}\big(\xi(\tau_{x}^{+})>\varphi^{-1}(t/M)\big|\tau_{a}^{-}=\infty\big),\\
\P_{1}\big(T_{\infty}^{+}-T_{x}^{+}<t\big|T_{a}^{-}=\infty\big)
\ge&\ (1-\varepsilon)\cdot \P_{1}\big(\xi(\tau_{x}^{+})>\varphi^{-1}(t/m)\big|\tau_{a}^{-}=\infty\big).
\end{aligned}
\end{equation}
Let $x=\bar{\Pi}^{-1}(\bar{\Pi}(\varphi^{-1}(t))/\lambda)$
for fixed $\lambda\in(0,1)$ and $s_{1}=\varphi^{-1}(t/m), s_{2}=\varphi^{-1}(t/M)$ in \eqref{eqn:spd:2-2}.
Since $\bar{\Pi}\in\mathcal{R}_{0}$ and $\varphi\in\mathcal{R}_{-\beta}$, one can check that as $t\to0+$
\[
\varphi^{-1}\in\mathcal{R}_{-1/\beta},
\quad
\frac{\bar{\Pi}(s_{1})}{\bar{\Pi}(x)}
=\lambda\frac{\bar{\Pi}(\varphi^{-1}(t/m))}{\bar{\Pi}(\varphi^{-1}(t))}\to\lambda
\quad\text{and}\quad
\frac{\bar{\Pi}(s_{2})}{\bar{\Pi}(x)}
=\lambda\frac{\bar{\Pi}(\varphi^{-1}(t/M))}{\bar{\Pi}(\varphi^{-1}(t))}\to\lambda.
\]
Thus, applying Lemma \ref{lem:alpha=0}, \eqref{eqn:spd:2-2} holds true and one sees from  \eqref{eqn:spd:2-6} that
\[
\P_{1}\big(T_{\infty}^{+}-T_{x}^{+}<t\big|T_{a}^{-}=\infty\big)
\to\lambda
\quad\text{as $t\to0+$},
\]
which is continuous in $\lambda\in(0,1)$.
Similar to \eqref{eqn:spd:1-1}, one can also have
\[
\P_{1}\big(T_{\infty}^{+}-T_{x}^{+}<t\big|T_{\infty}^{+}<\infty\big)
\to\lambda
\quad\text{as $t\to0+$},
\]

Going back to the events in \eqref{eqn:spd:1-3}, we get by applying the latter result that for every $\lambda\in(0,1)$
\[\begin{aligned}
&\ \P_{1}\Big(\lambda\bar{\Pi}(\bar{X}(T^{+}_{\infty}-t))\ge \bar{\Pi}(\varphi^{-1}(t))\Big| T_\infty^{+}<\infty\Big)\\
=&\ \P_{1}\Big(\bar{X}(T_{\infty}^{+}-t)\le \bar{\Pi}^{-1}\big(\bar{\Pi}(\varphi^{-1}(t))/\lambda\big)\Big| T_\infty^{+}<\infty\Big)\underset{t\rightarrow 0+}{\to} \lambda,
\end{aligned}\]
which together with \eqref{eqn:spd:2-3} concludes the proof.
\end{proof}
Along establishing the nonlinear renormalisation for $\bar{X}(T^{+}_\infty-t)$ in the case $\alpha=0$ above, we have  implicitly addressed a nonlinear renormalisation of the overshoot of the parent L\'evy process $\xi$. We  further illustrate this by showing Proposition \ref{cor:a=0}.
\begin{proof}[\bf Proof of Proposition \ref{cor:a=0}] For every $\lambda\in(0,1)$, choose $s$ and $x$ such that $\bar{\Pi}(s)=\lambda \bar{\Pi}(x)$. Lemma \ref{lem:alpha=0} gives
\[\begin{aligned}
\P\big(\xi(\tau_{x}^{+})>s\big)
=&\ \P\big(\bar{\Pi}(\xi(\tau_{x}^{+}))\leq \bar{\Pi}(s)\big)\\
=&\ \P\big(\bar{\Pi}(\xi(\tau_{x}^{+}))\leq \lambda \bar{\Pi}(x)\big)
=\P\big(\bar{\Pi}(\xi(\tau_{x}^{+}))\big/\bar{\Pi}(x)\leq \lambda\big)\to\lambda
\end{aligned}\]
as $x\to\infty$. We apply further \eqref{eqn:spd:2-3} and the proof of the proposition is completed. The fact that the result also holds conditioned on $\{\tau_0^{-}=\infty\}$ follows from similar arguments as in the case $\alpha>0$ whose details are omitted.
\end{proof}

\noindent \textbf{Proof of Theorem \ref{thm:speed}:} To conclude, Theorem \ref{thm:speed} is obtained by combining the three cases treated above with Lemma \ref{lem:equivsupremum}. \qed
\\

In the last section, we present some observations on the classical CSBPs. The branching property allows one to study directly the explosion time and its renormalisation without going through a study of the perpetual integral.

\subsection{The case of CSBP}\label{sec:CSBP}
If the rate function $R$ is the identity function, the Markov process $X$ defined by \eqref{defn:X} is a classical CSBP with branching mechanism $\psi$. Namely, it fulfills the branching property: for any $x,y\in (0,\infty)$ and $t\geq 0$
$X_t(x+y)\overset{D}{=}X^1_t(x)+X^2_t(y)$,
where $X^1_t(x)$, $X^2_t(y)$ are independent copies of the process $X$ at time $t$ starting, respectively, from $x$ and $y$.  Notice that when $R$ is not linear, the branching property does not hold and the techniques below do not apply.

A fundamental consequence of the branching property lies on the fact that for any $x\in (0,\infty)$ and $t\geq 0$, the Laplace transform of the process at time $t$ satisfies  identity
\begin{equation}\label{cumulant}
\mathbb{E}_x(e^{-\lambda X_t})=e^{-xu_t(\lambda)}, \,\,\,\lambda>0
\end{equation}
with $t\mapsto u_t(\lambda)$ the solution to the following integral equation
\begin{equation}\label{cumulantode}
u_0(\lambda)=\lambda
\quad\text{ and }\quad
\int_{u_t(\lambda)}^{\lambda}\frac{\ddr u}{\psi(u)}=t.
\end{equation}
A lot of information is available from the identity \eqref{cumulant} and it is not in general necessary in this setting to go back to the representation of the process via a time-changed L\'evy process.
\subsubsection{Explosion criterion}
A striking feature in this setting is that the explosion of the process can be studied directly from \eqref{cumulant} as follows:
\begin{equation}\label{distributionexplosiontime}
\mathbb{P}_x(T_{\infty}^+>t)=\underset{\lambda \rightarrow 0}{\lim}\mathbb{E}_x[e^{-\lambda X_t}]=e^{-xu_t(0+)}
\end{equation}
with $u_t(0+):=\underset{\lambda \rightarrow 0}\lim u_t(\lambda)$. Hence, the process $X$ explodes with positive probability if and only if $u_t(0+)>0$. From \eqref{cumulantode}, one can verify that this is equivalent to the following integral condition
\begin{equation}\label{Dynkincriterion}
\int_0\frac{\ddr u}{|\psi(u)|}<\infty.
\end{equation}
More precisely, when \eqref{Dynkincriterion} holds, $t\mapsto u_t(0+)$ satisfies the equation
\begin{equation}\label{u(0)} \int^{u_t(0+)}_{0}\frac{\ddr u}{-\psi(u)}=t, \text{ for all } t\geq 0.
\end{equation}
The identity \eqref{distributionexplosiontime} provides in fact the cumulative distribution function of the explosion time. Recall $p$ the largest zero of $\psi$ and notice that $u_t(0+)\underset{t\rightarrow \infty}{\longrightarrow} p$.
Then
%
\begin{equation}\label{dfexplosiontime}
\mathbb{P}_x(T_\infty^{+}>t,T_\infty^{+}<\infty)=e^{-xu_t(0+)}-e^{-xp}.
\end{equation}

We now restudy briefly the speed of explosion and the renormalisation of the explosion time in the case $-\psi\in \mathcal{R}_\alpha$ at $0$ with $\alpha\in (0,1)$.
\subsubsection{The speed of explosion}
For simplicity, we assume from now on that $X$ has \textit{nondecreasing} sample paths, i.e. the parent L\'evy process $\xi$ is a subordinator. In this case $p=\infty$ and the explosion event is almost sure when \eqref{Dynkincriterion} holds. Using the identity $T^{+}_\infty=T^{+}_{a/u_t(0+)}+T^{+}_\infty \circ \theta_{T^{+}_{a/u_t(0+)}}$, the strong Markov property and \eqref{dfexplosiontime}, we get
\[\begin{aligned}
\mathbb{P}_1\big(u_t(0+)X_{T_\infty^{+}-t}>a\big)
&=\mathbb{P}_1\big(T_\infty^{+}-T^{+}_{a/u_t(0+)}\geq t\big)\\
&=\mathbb{E}_1\bigg[\mathbb{P}_{X_{T^{+}_{a/u_t(0+)}}}\big(T^{+}_\infty>t\big)\bigg]\\
&=\mathbb{E}_1\bigg[e^{-X_{T_{a/u_t(0+)}^{+}}u_t(0+)}\bigg].
\end{aligned}\]
Assume now that $-\psi\in \mathcal{R}_\alpha$ at  $0$ with $\alpha \in (0,1)$. By Proposition \ref{prop:overshoot},\[a\frac{u_t(0+)}{a}X_{T^+_{a/u_t(0+)}}=a \frac{u_t(0+)}{a}\xi_{\tau^+_{a/u_t(0+)}}\Longrightarrow a\chi_\alpha.\]
Therefore, for any $a>0$
$\mathbb{P}(u_t(0+)X_{T_\infty^{+}-t}>a)\underset{t\rightarrow 0+}{\longrightarrow} \mathbb{E}[e^{-a\chi_{\alpha}}]$.
We get finally the following limit in law
\begin{equation}\label{speedclassicalcsbp}
u_t(0+)X_{T_\infty^{+}-t} \Longrightarrow \mathbbm{e}_{\chi_{\alpha}}:=\frac{1}{\chi_{\alpha}}\mathbbm{e}_1 \text{ as } t \to 0+
\end{equation}
where $\mathbbm{e}_1$ is a standard exponential random variable independent of $\chi_{\alpha}$. In other words, $\mathbbm{e}_{\chi_{\alpha}}$ is a mixture of exponential random variables with random parameter given by $\chi_\alpha$.
\subsubsection{Renormalisation of the explosion time}
We now explain the link between \eqref{speedclassicalcsbp} and Case \ref{case1} of Theorem \ref{thm:speed}. First recall $\varphi$ in \eqref{defn:s}. By  change of variable $y=1/u$, we get $\varphi(x)=\int_0^{1/x}\frac{\ddr u}{-\psi(u)}$ for all $x$, and we easily verify with the help of \eqref{u(0)} that $\varphi^{-1}(t)=1/u_t(0+)$ for all $t\geq 0$. Since by assumption $-\psi\in \mathcal{R}_{\alpha}$ at $0$, $\varphi \in \mathcal{R}_{-(1-\alpha)}$ at $\infty$ and $\varphi^{-1} \in \mathcal{R}_{-1/(1-\alpha)}$ at $\infty$, then \[u_{t\varphi(x)}(0+)=\frac{\varphi^{-1}(\varphi(x))}{\varphi^{-1}(t\varphi(x))}\frac{1}{x} \underset {x\rightarrow \infty}{\sim} t^{\frac{1}{1-\alpha}}\frac{1}{x}.\]
%
Therefore, by applying \eqref{distributionexplosiontime}, we get
\begin{align}\label{renormalisationexplosiontime}
\mathbb{P}_x\left(\frac{T^+_\infty}{\varphi(x)}
\leq t\right)&=e^{-xu_{t\varphi(x)}(0+)}\underset{x\rightarrow \infty}{\longrightarrow} e^{-t^{\frac{1}{1-\alpha}}}.
\end{align}
Hence, we recover the result
$\frac{T^+_\infty}{\varphi(x)}\Longrightarrow  \rho_{\alpha,1} \text{ as } x \text{ goes to } \infty$
where $\rho_{\alpha,1}$ has a Weibull distribution with parameter $\frac{1}{1-\alpha}$. In particular, $\mathbbm{e}_1:=\rho_{\alpha,1}^{\frac{1}{1-\alpha}}$ is a standard exponential random variable and by \eqref{speedclassicalcsbp}
\[u_t(0+)X_{T_\infty^{+}-t}\Longrightarrow \mathbbm{e}_{\chi_\alpha}\overset{D}{=}\frac{1}{\chi_\alpha}\rho_{\alpha,1}^{\frac{1}{1-\alpha}}\]
which coincides with Case \ref{case1} of Theorem \ref{thm:speed} with $\beta=1$.

\begin{rmk}
\begin{enumerate}
\item
We mention that the convergence in \eqref{renormalisationexplosiontime} was noticed by Sagitov \cite{Sagitov}; see also Pakes \cite{Pakes} where some discrete nonlinear branching processes are studied.
\item The case $p<\infty$ for which the sample paths of the CSBP are not monotone could also be handled directly. But it requires to work with the process conditioned on explosion; see e.g. Fang and Li \cite{FL2019} for a study of the latter.
\end{enumerate}
\end{rmk}
\medskip
\noindent \textbf{Acknowledgements} Bo Li and Xiaowen Zhou are supported by NSERC (RGPIN-2016-06704).  Cl\'ement Foucart is supported  by the European Union (ERC, SINGER, 101054787). Views and opinions expressed are however those of the authors only and do not necessarily reflect those of the European Union or the European Research Council. Neither the European Union nor the granting authority can be held responsible for them.


\appendix
\section{Appendix}
This section is dedicated to the proofs of Lemmas \ref{lem+:uf} and \ref{lem=:uf}.
Recall that for the test function $f\in\mathcal{R}_{-\gamma}$ with $\gamma>0$ and $U_{a}f(x)<\infty$ for some $a>0$, we must have from Corollary \ref{cor:1},
\[
\int^{\infty}\frac{-f(y)}{y\psi(1/y)}\ddr y<\infty.
\]

In the non-subordinator case, i.e. $0<p<\infty$,
applying  \eqref{eqn:k:1}, \eqref{defn:Unonsub} and \eqref{eqn:f},
the finiteness of integral above implies
\[
\int^{\infty}\frac{f(y)\ddr y}{y}\int_{0}^{y}\kappa(z)\ddr z<\infty
\Rightarrow
\int^{\infty}\kappa(z)dz \int_{z}^{\infty}\frac{f(y)}{y}\ddr y<\infty
\Rightarrow
\int^{\infty}f(z)\kappa(z)dz<\infty
\]
where Fubini-Tonelli's theorem is applied. Thus, by \eqref{eqn:resl1}, for every  $x>a>0$
\begin{equation}\label{eqn:uf}
	\begin{aligned}
		U_{a}f(x)
		=&\ \int_{a}^{\infty}\E_{x}\big[f( \xi(t)); t<\tau_{a}^{-}\big] \ddr t
		=\int_{a}^{\infty}f(y)\big(\kappa(y-x)-e^{p(a-x)}\kappa(y-a)\big) \ddr y\\
		=&\ \int_{0}^{\infty}f(x+y)\kappa(y) \ddr y +\phi'(0)\int_{a}^{x}e^{p(y-x)}f(y) \ddr y-e^{p(a-x)}\int_{0}^{\infty}f(a+y)\kappa(y) \ddr y,
	\end{aligned}
\end{equation}
where $\kappa(y)=e^{py}\phi'(0)$ for $y<0$ in \eqref{eqn:resl1}.
\medskip

\noindent In the subordinator case, i.e. $p=\infty$, one has $\tau_{a}^{-}=\infty$ $\P_{x}$-a.s for $x>a>0$, and $U_{a}(x,\ddr y)=U(x,\ddr y)=\int_{0}^{\infty}\mathbb{P}_x(\xi(t)\in \ddr y)\ddr t$, see \eqref{eqn:potentialsub}.  One has $$U_{0}f(x)=\int_0^{\infty}f(y)U(x,\ddr y)=\int_0^{\infty}f(x+y)U(\ddr y).$$ Notice that the latter coincides with \eqref{eqn:uf} if we understand the  last two integrals to be zero (recall $p=\infty$) and replace $\kappa(y)\ddr y$ by $U(\ddr y)$.

\begin{proof}[\bf Proof of Lemma \ref{lem+:uf}]
We start with the non-subordinator case. It is straightforward to check from \eqref{eqn:uf} that $x\to f(\log x)\in\mathcal{R}_{0}$.
Therefore, applying \kara\ we have
\[\int_{a}^{x}e^{p(y-x)}f(y) \ddr y
=u^{-p}\int_{e^{a}}^{u}z^{p-1}f(\log z) \ddr z\Big|_{u=e^{x}}
\sim \frac{1}{p}f(x)\in\mathcal{R}_{-\gamma}\text{\ at\ }\infty,\]
which together with the fact $e^{-px}=o(f(x))$ gives
\begin{equation}\label{eqn+:uf1}
U_{a}f(x)-\int_{0}^{\infty}f(x+y)\kappa(y) \ddr y=O\big(f(x)\big).
\end{equation}


On the other hand, for $s>\alpha$, we have from Fubini-Tonelli's theorem that
\begin{equation}\label{eqn+:uf2}
\begin{aligned}
\ \int_{0}^{\infty}(x+y)^{-s}\kappa(y) \ddr y
=&\iint_{y,u>0}\frac{u^{s-1}}{\Gamma(s)}e^{-(x+y)u}\kappa(y) \ddr y\ddr u
= \int_{0}^{\infty}\frac{u^{s-1}}{\Gamma(s)}\hat{\kappa}(u)e^{-xu} \ddr u\\
\sim&\ \frac{\Gamma(s-\alpha)}{\Gamma(s)}x^{-s}\hat{\kappa}\big(1/x\big)
\sim \frac{\Gamma(s-\alpha)}{\Gamma(s)}\frac{-x^{-s}}{\psi(1/x)}\quad\text{as\ }x\to\infty,
\end{aligned}
\end{equation}
where \eqref{eqn:hk} and \ktt\ are used in the second line above. For any
$\varepsilon>0$, applying the UCT to $f(x)x^{s}\in\mathcal{R}_{s-\gamma}$ with $s\in(\alpha,\gamma)$, we have for $x$ large enough
\[
\Big|\frac{f(x+y)}{f(x)}-\big(\frac{x+y}{x}\big)^{-\gamma}\Big|\le \varepsilon\big(\frac{x+y}{x}\big)^{-s}
\quad\forall y>0.
\]
Thus, we have for such $x>0$
\[
\int_{0}^{\infty}\frac{f(x+y)}{f(x)}\kappa(y)\ddr y
=\int_{0}^{\infty}\frac{x^{\gamma}\kappa(y)\ddr y}{(x+y)^{\gamma}}
+\varepsilon_{1}(x)\cdot \int_{0}^{\infty}\frac{x^{s}\kappa(y)\ddr y}{(x+y)^{s}},
\]
for some $|\varepsilon_{1}(x)|<\varepsilon$.
Applying \eqref{eqn+:uf2} to the two integrals on the right hand side above,
one can find that they are of the same order, which implies
\[
\int_{0}^{\infty}f(x+y)\kappa(y) \ddr y
\sim \frac{-f(x)}{\psi(1/x)}\frac{\Gamma(\gamma-\alpha)}{\Gamma(\gamma)}
\sim \frac{\Gamma(\gamma-\alpha+1)}{\Gamma(\gamma)}\int_{x}^{\infty}\frac{-f(y)}{y\psi(1/y)}\ddr y
\in\mathcal{R}_{\alpha-\gamma}.
\]
Plugging into \eqref{eqn+:uf1} with the fact $\psi(0)=0$ completes the proof.

In the subordinator case, we omit the details since the proof follows from the same arguments. In particular \eqref{eqn+:uf2} holds true if one replaces $\kappa(y)\ddr y$ by $U(\ddr y)$ and $\hat{\kappa}(u)$ by $\hat{U}(u)$.
\end{proof}

\begin{proof}[\bf Proof of Lemma \ref{lem=:uf}]
In the non-subordinator case, by following the same argument as before, we see from \eqref{eqn:uf} that
 \eqref{eqn+:uf1} still holds. The lemma will be proved once we have shown that
\begin{equation}
\label{eqn=:uf1}
\int_{0}^{\infty}f(x+y)\kappa(y)\ddr y
\sim \int_{x}^{\infty}f(y)\kappa(y)\ddr y
\sim \frac{1}{\Gamma(\alpha)}\int_{x}^{\infty}\frac{-f(y)}{y\psi(1/y)}\ddr y
\in\mathcal{R}_{0}.
\end{equation}

Together with the finiteness of the integrals, we have from \eqref{eqn:k:1} and \eqref{eqn:f} that
\begin{gather}
\int_{x}^{\infty}\frac{-f(y)}{y\psi(1/y)}\ddr y
\sim
\Gamma(1+\alpha)\int_{x}^{\infty}\frac{f(y)}{y}\int_{0}^{y}\kappa(z)\ddr z\ddr y\in\mathcal{R}_{0},\label{eqn=:uf2}\\
\int_{x}^{\infty}\frac{f(z)}{z}dz\cdot\int_{0}^{x}\kappa(z)\ddr z
\sim \frac{f(x)}{\alpha}\int_{0}^{x}\kappa(z)\ddr z
=o\Big(\int_{x}^{\infty}\frac{f(y)}{y}\int_{0}^{y}\kappa(z)\ddr z\ddr y\Big),\label{eqn=:uf3}
\end{gather}
where \kara\ is applied in the last identity above, and entails further
\[\begin{aligned}
\int_{x}^{\infty}\frac{f(y)}{y}\int_{0}^{y}\kappa(z)\ddr z\ddr y
=&\ \int_{x}^{\infty}\frac{f(y)}{y}\ddr y\cdot\int_{0}^{x}\kappa(z)\ddr z
+\int_{x}^{\infty}\kappa(z)\int_{z}^{\infty}\frac{f(y)}{y}\ddr y\ddr z\\
\sim&\ \int_{x}^{\infty}\kappa(z)\int_{z}^{\infty}\frac{f(y)}{y}\ddr y\ddr z
\sim \frac{1}{\alpha}\int_{x}^{\infty}f(z)\kappa(z)\ddr z.
\end{aligned}\]
Plugging this into \eqref{eqn=:uf2} gives the second asymptotic equivalence in \eqref{eqn=:uf1}.

\medskip

For arbitrary $b>1$, we consider the difference
\[\begin{aligned}
&\ \int_{0}^{\infty}f(x+y)\kappa(y) \ddr y-\int_{bx}^{\infty}f(y)\kappa(y) \ddr y\\
=&\ \int_{bx}^{\infty}\Big(\frac{f(x+y)}{f(y)}-1\Big)f(y)\kappa(y)\ddr y+ \int_{0}^{bx}f(x+y)\kappa(y)\ddr y=I_{1}+I_{2}.
\end{aligned}\]
By the fact $\int_{x}^{\infty}f(y)\kappa(y)\ddr y\in\mathcal{R}_{0}$, we have
\[
I_{1}
\le \sup_{y>bx}\Big|\frac{f(x+y)}{f(y)}-1\Big| \int_{bx}^{\infty}f(y)\kappa(y)\ddr y
\sim\Big(1-\Big(\frac{b}{1+b}\Big)^{\alpha}\Big)\int_{x}^{\infty}f(y)\kappa(y)\ddr y.
\]
Moreover, for fixed $b>1$, $\frac{x+y}{x}\le b+1$ for $y<bx$ in $I_{2}$, applying UCT to $f\in\mathcal{R}_{-\alpha}$ one has
\[
I_{2}\sim f(x)\int_{0}^{bx}\Big(\frac{x}{x+y}\Big)^{\alpha}\kappa(y)\ddr y
\le f(x)\int_{0}^{bx}\kappa(y)\ddr y=o\Big(\int_{x}^{\infty}f(y)\kappa(y)\ddr y\Big),
\]
where \eqref{eqn=:uf3} and the fact $\int_{0}^{x}\kappa(y)\ddr y\in\mathcal{R}_{\alpha}$ are used. Therefore, first letting  $x\to\infty$ and then letting $b\to\infty$ gives \eqref{eqn=:uf1}, and this completes the proof.

Details are omitted in the subordinator case, this follows from the same arguments.
\end{proof}
\end{document}